\numberwithin{equation}{section}
\newtheorem{theorem}{Theorem}
\newtheorem{lemma}{Lemma}
\theoremstyle{definition}
\newtheorem{definition}[theorem]{Definition}
\theoremstyle{remark}
\begin{document}
	
	\title[Perturbed Linear ODEs and SDEs]
	{Weighted $L_\infty$ Asymptotic Characterisation of Perturbed Autonomous Linear Ordinary and Stochastic Differential Equations: Part I - ODEs}
	
	\author{John A. D. Appleby}
	\address{School of Mathematical
		Sciences, Dublin City University, Glasnevin, Dublin 9, Ireland}
	\email{john.appleby@dcu.ie} 
	
	\author{Emmet Lawless}
	\address{School of Mathematical
		Sciences, Dublin City University, Glasnevin, Dublin 9, Ireland}
	\email{emmet.lawless6@mail.dcu.ie}
	
	\thanks{EL is supported by Science Foundation Ireland (16/IA/4443). JA is supported by the RSE Saltire Facilitation Network on Stochastic Differential Equations: Theory, Numerics and Applications (RSE1832).} 
	\subjclass{}
	\keywords{}
	\date{21 October 2024}
	
	\begin{abstract}
		This is the first of a two--part paper which determines necessary and sufficient conditions on the asymptotic behaviour of forcing functions so that the solutions of additively pertubed linear differential equations obey certain growth or decay estimates. Part I considers deterministic equations, and part II It\^o--type stochastic differential equations. Results from part I are used to deal with deterministically and stochastically forced equations in the second part. Results apply to both scalar and multi--dimensional equations, and connect the asymptotic behaviour of time averages of the forcing terms on finite intervals with the growth or decay rate of the solution. Mainly, results deal with large perturbations, but some indications of how results extend to tackle subdominant perturbations are also sketched.   	
	\end{abstract}
	
	\maketitle
	
	\section{Overview and Scope}
	The study of the asymptotic behaviour of perturbed differential systems is an important and classical topic in the theory of differential equations, and the most basic results concern perturbations from homogeneous autonomous linear differential equations e.g., 
	\[
	u'(t)=Au(t), \quad t\geq 0
	\] 
	to form the perturbed ordinary differential equation
	\begin{equation} \label{eq.x1intro}
		x'(t)=Ax(t)+p(t,x(t)), \quad t\geq 0
	\end{equation}
	Here $A$ is a $d\times d$ matrix, and $p:[0,\infty)\times\mathbb{R}^d$ is a suitably regular function. 
	
	The thrust of a great deal of the research falls into two broad categories, and of course is not confined to autonomous equations. First, to give smallness conditions on $p$, of an appropriate character, such that the asymptotic behaviour of $x$ is similar to that of $u$. Results such as the linearisation theorem fall into this category, along with Hartman--Wintner theorems. 
	
	An important second line of investigation considers stable equations $u$ (in which solutions of $u$ tend to zero), and the perturbation lies in a certain ``interesting'' function space (e.g., the space of periodic, or bounded, or integrable, or convergent, functions). In this case, it is typically the case that the solution $x$ will lie in the same space as the perturbation. 
	
	In many ways, the investigations in this paper follows the theme of the second line, and touch a little on the first, with some indications of how work might proceed in the case of unstable equations, or equations with subdominant perturbations. This part of the paper deals with deterministic equations, but it turns out that many ideas, proofs, and the forms of conditions placed on the problem data, carry over to It\^o--type stochastic differential equations as well, and the second part of the paper will be concerned with such stochastic equations. 
	
	We are mostly interested in giving a \textit{characterisation} of rates of decay, or growth, and asymptotic bounds on solutions in terms of perturbations, and we also restrict attention to the case where the perturbation $p$ is \text{state--independent}. To give a bird's--eye view of what is intended, by state--independent perturbations, we mean that the function $p$ in \eqref{eq.x1intro} does not depend on $x$, so we have 
	$p(t,x)=f(t)$, where $f:[0,\infty)\to \mathbb{R}^d$ has suitably regularity properties (for simplicity, we usually take $f$ to be continuous). On the other hand, when talking about an asymptotic characterisation, we have in mind obtaining conditions $(\text{A})$, $(\text{B})$, $(\text{C})$ on $f$ such that 
	\begin{gather*}
		\text{$f$ obeys $(\text{A})$ }  \Longleftrightarrow \limsup_{t\to\infty} \frac{\|x(t)\|}{\gamma(t)}<+\infty, \\
		\text{$f$ obeys $(\text{B})$ }  \Longleftrightarrow \lim_{t\to\infty} \frac{\|x(t)\|}{\gamma(t)}=0, \\
		\text{$f$ obeys $(\text{C})$ }  \Longleftrightarrow \limsup_{t\to\infty} \frac{\|x(t)\|}{\gamma(t)}\in (0,\infty), 
	\end{gather*}
	where $\gamma:[0,\infty)\to (0,\infty)$ is a continuous weight function with a given rate of growth or decay (or is simply the constant unit function, if one is interested in unweighted boundedness, convergence, or non--convergent boundedness). As might be imagined, the art is to connect the asymptotic size of (some easily computed functional of) $f$ with $\gamma$, and this we do. 
	
	In sketching the high--level objective of the work, we hope that the reader, who might be hitherto skeptical that much that is new or is interesting can be stated about such elementary \textit{autonomous linear ordinary differential} equations, may be more open to our approach. One reason for this is that many asymptotic results for forced equations give only \textit{sufficient conditions} for certain types of asymptotic behaviour, or perhaps demonstrate that conditions are sharp by means of counterexamples, whilst we claim that the appropriate conditions on forcing terms can be captured exactly. 
	
	But this is not all. It will turn out that our broad approach, and the conditions on forcing terms that are generated very naturally by it, are also successful obtaining cognate asymptotic characterisations for stochastic differential equations in both a mean, mean--square and pathwise sense, and demonstrating this will be the main task of the second part of this work. 
	
	Finally, by reference to some of our other works (both published and in progress), the reader will see that the functionals of the forcing terms that we consider here in order to characterise certain weighted $L_\infty$ asymptotic behaviour for ordinary differential equations, can be used to characterise general $L^p$, or periodic, or convergent, or time--averaged asymptotic behaviour of a large class of linear functional differential equations subjected to external state--independent forcing. Furthermore, our experience with diverse problems suggests that a characterisation of when solutions of stable autonomous deterministic linear differential systems lie in almost any important function spaces will hinge on the same functionals of $f$ that we consider here. 
	
	In fact, the situation is perhaps even better than this; on the one hand, characterisation results can be established when the perturbing terms are not only deterministic, but stochastic, and one can still proceed when both deterministic and stochastic state-independent terms are simultaneously present. On the other, the equations studied can be those with finite or unbounded memory, or unperturbed equations which are autonomous or non--autonomous. Indeed, a number of these asymptotic characterisation results for what may be fairly viewed as reasonably complicated general high--dimensional systems (e.g., multidimensional stochastic and deterministically forced functional differential equations with unbounded memory) will rely directly on perturbation results for a single (and simplest--imaginable) scalar ordinary differential equation, namely the ODE
	\[
	u'(t)=-u(t), \quad t\geq 0. 
	\]
	Chiefly for this reason (though also on didactic and expository grounds), we start the first part of this paper building up very elementary scalar perturbation results. We hope that by the end of the first part of this paper (and even moreso by the end of the second part), the reader will see that results for very simple scalar ordinary differential equations are not only of some independent interest, but are also instrumental in tackling large classes of more complicated equations.  
	
	Next, we indicate the kind of conditions (A), (B) and (C) on $f$ that are necessary and sufficient for to characterise the size of solutions of the differential equations. Without going into fine details yet, what we find interesting is that the same type of functional for $f$ works for a very broad class of perturbations. In very rough terms indeed, the reader can orient themselves by using the rule of thumb that $\gamma$ should be chosen so that  
	\[
	\left\|\int_{t-1}^t f(s)\,ds\right\| = O(\gamma(t)), \quad t\to\infty
	\] 
	in order to satisfy (A), or  
	\[
	\left\|\int_{t-1}^t f(s)\,ds\right\|= o(\gamma(t)), \quad t\to\infty,
	\]
	in order to satisfy (B), or  
	\[
	c\gamma(t)\leq 	\left\|\int_{t-1}^t f(s)\,ds\right\|
	\leq C\gamma(t)
	\]
	for some $0<c\leq C<+\infty$, in order to satisfy (C). The precise technical conditions are outlined below. 
	
	As already mentioned, in this work we are largely interested in the situation where the perturbation dominates the unperturbed solution, and the unperturbed equation is (exponentially) asymptotically stable. In this setting, a dominant perturbation suggests either making $\gamma$ decay to zero (or grow to infinity) slower than exponentially, or simply be non--decreasing, and this is what we will mostly ask of the weight function. In doing so, we make more precise what we mean by a subexponential weight function, but the conditions that we need to impose on this auxiliary object are not restrictive. 
	
	We also sketch some results which deal with cases when the perturbation is subdominant, or the unperturbed equation is unstable (or both). Our analysis is not as comprehensive, but the preliminary results we obtain suggest that the type of conditions suggested above, which involve the size of moving averages of $f$ on finite intervals, can still be used to characterise certain types of asymptotic behaviour. 
	We leave a more thorough investigation for a later work. 	        
	
	\section{Technical Introduction}
	In this paper, we are concerned in characterising the pointwise asymptotic behaviour of state--independent perturbations of the linear differential equation 
	\[
	u'(t)=-u(t), \quad t\geq 0, \quad u(0)=\xi.
	\]
	The perturbations we consider are \begin{enumerate}
		\item[(i)] Deterministic 
		\begin{equation} \label{eq.y1}
			y'(t)=-y(t)+f(t), \quad t\geq 0,
		\end{equation}
		where $f$ is a continuous function on $[0,\infty)$;
		\item[(ii)] Stochastic 
		\begin{equation} \label{eq.Y01}
			dY_0(t)=-Y_0(t)\,dt+\sigma(t)\,dB(t), \quad t\geq 0, 
		\end{equation}
		where $\sigma$ is a continuous function, and $B$ is a standard one--dimensional Brownian motion;
		\item[(iii)] Stochastic and deterministic
		\begin{equation} \label{eq.Y1}
			dY(t)=\left(-Y(t)+f(t)\right)\,dt+\sigma(t)\,dB(t),
		\end{equation}
		where $f$, $\sigma$ and $B$ are as above.
	\end{enumerate}
	The study of problems (ii) and (iii) will be in the second part of the paper. Since $Y(t)=y(t)+Y_0(t)$ and the size estimates of $y$ and $Y_0$ are worked out in problems (i) and (ii) most of the additional work involved in studying \eqref{eq.Y1} hinges on showing that the deterministic and stochastic components $y$ and $Y_0$ do not ``cancel each other out'', even when $y$ and $Y_0$ have comparable asymptotic upper bounds. 
	
	Once the problem in part (i) is dealt with, we can apply it to more complicated dynamical systems. In Part I of this work, we will consider finite dimensional linear  differential equations; in part II, the mean square of a finite dimensional SDE will be considered. We do not study path--by--path behaviour of multidimensional SDEs in this work. 
	
	To be specific, our ultimate goal is the following. Let $y$ stand for any of the solutions of the above equations, and $\gamma$ be a weight function, with some additional properties, which allows us to cover a wide range of possible asymptotic behaviour (such as growth or slow decay to zero). We then seek identical necessary and sufficient conditions on the forcing functions $f$ and $\sigma$ so that 
	\[
	\limsup_{t\to\infty} \frac{|y(t)|}{\gamma(t)}\in (0,\infty).
	\] 
	In the case of scalar stochastic equations, we wish the asymptotic behaviour to be true almost surely (a.s.), which is to say that for every $\omega\in \Omega^\ast$, where $\Omega^\ast$ is an event with probability one, each trajectory obeys
	\[
	\limsup_{t\to\infty} \frac{|Y(t,\omega)|}{\gamma(t)}\in (0,\infty)
	\] 
	and $\gamma$ is a \textit{deterministic} function which depends on $f$ and $\sigma$. Such a characterisation can be achieved provided the noise term $\sigma$ does not grow more quickly than an exponential function (there are no size restrictions on $f$). In the case when $\sigma$ exhibits faster than exponential growth (and again, we do not restrict $f$), we can give sharp sufficient conditions for the exact fluctuation size of $Y$ in terms of a weight function. These results appear in Part II of the paper.   
	
	Confining so much of our attention to scalar equations whose unperturbed solution decays at the parameter independent rate $e^{-t}$ might seem very limited. However, if we are interested in non--exponentially convergent solutions, or unbounded solutions whose upper bounds grow at any rate, we show that the the important asymptotic behaviour of any stable finite dimensional perturbed autonomous system 
	\[
	x'(t)=Ax(t)+f(t),\quad t\geq 0
	\] 
	is captured by $y$ above, which is in turn captured by the one--parameter family of functions $\int_{t-\theta}^t f(s)\,ds$. In fact, as we will see, the asymptotic behaviour of this family of functions is essentially the same for all values of $\theta$.
	
	In a following work, we will show that this is also the case for perturbed Volterra differential equations of the form 
	\begin{equation} \label{eq.volterra}
		x'(t)=\int_{[0,t]} \nu(ds)x(t-s)+f(t), \quad t\geq 0
	\end{equation}
	where $\nu$ is a $d\times d$ matrix--valued finite measure, and $f$ is a $\mathbb{R}^d$--valued function, provided the underlying differential resolvent $r$, which solves the matrix--valued equation 
	\[
	r'(t)=\int_{[0,t]} \nu(ds)r(t-s), \quad t\geq 0; \quad r(0)=I_{d\times d},
	\]
	lies in $L^1([0,\infty);\mathbb{R}^{d\times d})$. 
	
	Although the programme appears very humble, it turns out the question of what constitute necessary and sufficient conditions for sharp $L_\infty$ estimates of solutions of perturbed equations, seems open. As hinted at in the first section, it appears that this is a question that is worthwhile investing effort in, since a clear picture of the asymptotic behaviour of equations \eqref{eq.y1}, \eqref{eq.Y01} and \eqref{eq.Y1} would appear to be very useful in analysing the asymptotic behaviour of more complicated dynamical systems. 
	
	To give a concrete example, suppose that $X$ is the solution of the perturbed stochastic pantograph equation 
	\[
	dX(t)=\left(bX(t)+aX(qt)+f(t)\right)\,dt + \sigma(t)\,dB(t)
	\]
	where $a$ and $b$ are real constants, $q\in (0,1)$ and  $X(0)=\xi$. Let $Y(0)=0$, consider $Z=X-Y$, and set $\varphi(t)=(1+b)Y(t)+aY(qt)$ for $t\geq 0$.
	Then 
	\[
	Z'(t)=bZ(t)+aZ(qt)+\varphi(t), \quad t\geq 0.
	\]
	Thus, the asymptotic behaviour of $\varphi$ is entirely captured by that of $Y$, and studying the asymptotic behaviour of $Z$ is certainly easier than studying that of $X$ directly. Moreover, since $X=Z+Y$, \textit{$X$ depends on the perturbations $f$ and $\sigma$ purely through $Y$}.  Conversely, writing 
	\[
	dX(t)=\left(-X(t)+f(t)+\{(1+b)X(t)+aX(qt)\}\right)\,dt + \sigma(t)\,dB(t)
	\]
	and treating the term in curly brackets as a forcing term, we may apply stochastic integration by parts to get 
	\[
	X(t)=\xi e^{-t} + Y(t)
	+\int_0^t e^{-(t-s)} \{(1+b)X(s)+aX(qs)\}\,ds, \quad t\geq 0,
	\] 
	which rearranges to give an expression for $Y$ in terms of $X$:
	\[
	Y(t)=
	X(t)-\xi e^{-t} -\int_0^t e^{-(t-s)} \{(1+b)X(s)+aX(qs)\}\,ds, \quad t\geq 0.
	\]
	Therefore, if nice asymptotic behaviour in $X$ arises, we may use this identity to obtain information about the behaviour of $Y$ that is necessary to generate that behaviour. A programme of results which characterise the $L_\infty$ asymptotic behaviour of this equation in the case $b<0$, $|a|<|b|$ (which are necessary and sufficient conditions for all solutions of the equation with $f=\sigma\equiv 0$ to tend to zero), and which uses results from this work, is given in \cite{ALpanto2024}.
	
	Removing the complication of the stochastic perturbation for a moment, we see that these remarks are still valid for deterministically forced equations. Consider the asymptotic behaviour of 
	\[
	x'(t)=bx(t)+ax(qt)+f(t), \quad t\geq 0
	\]
	under the same conditions as above. Consider $z=x-y$, and set $\phi(t)= (1+b)y(t)+ay(qt)$. Then $z$ obeys 
	\[
	z'(t)=bz(t)+az(qt)+\phi(t), \quad t\geq 0.
	\]
	This is still a perturbed equation, but the pointwise behaviour of $\phi$ is likely to be better behaved than that of $f$, due to the fact that $y$, on which $\phi$ entirely depends, is a convolution with $f$ an exponential function
	\[
	y(t)=\int_0^t e^{-(t-s)}f(s)\,ds, \quad t\geq 0
	\] 
	and therefore is likely to be smoother. Once again, the functional dependence of $x=y+z$ on $f$ comes purely through $y$, and so, if $y$ has certain asymptotic behaviour, this may enable us to obtain good perturbation results by studying the equation for $z$. Conversely, since 
	\[
	y(t)=	x(t)-\xi e^{-t} -\int_0^t e^{-(t-s)} \{(1+b)x(s)+ax(qs)\}\,ds, \quad t\geq 0,
	\] 
	we may be able to capture necessary conditions on $y$ so that $x$ has appropriate asymptotic behaviour. 
	
	Finally, if the stochastically forced pantograph equation is 
	\[
	dX_0(t)=(bX_0(t)+aX_0(qt))\,dt + \sigma(t)\,dB(t),
	\]
	it makes sense to consider $Z_0=X_0-Y_0$, so $Z_0$ obeys 
	$Z_0'(t)=bZ_0(t)+aZ_0(qt)+\phi_0(t)$ for $t\geq 0$, where $\phi_0(t)=aY_0(qt)$, so once again $X_0=Z_0+Y_0$ depends on the forcing term $\sigma$ purely through $Y_0$, and conversely 
	\[
	Y_0(t)=	X_0(t)-\xi e^{-t} -\int_0^t e^{-(t-s)} \{(1+b)X_0(s)+aX_0(qs)\}\,ds, \quad t\geq 0.
	\] 
	Thus, we see that to study different types of deterministic and stochastic forcing of the underlying unperturbed equation $v'(t)=bv(t)+av(qt)$ for $t\geq 0$, it makes sense to have asymptotic results available for $y$, $Y_0$ and $Y$, according to the situation. 
	
	The reader will notice that we considered in this example a perturbation of a scalar non--autonomous delay differential equation, where the delay is unbounded. But we can study autonomous multidimensional equations with distributed delay as well. For instance, consider the (multidimensional) stochastic Volterra equation 
	\[
	dX(t)=\left(\int_{[0,t]} \nu(ds)X(t-s) + f(t)\right)\,dt+ \sigma(t)\,dB(t), \quad t\geq 0.
	\]
	By the expedient of subtracting the process $Y_i$ from the $i$--th component $X_i$, where $Y_i$ is defined by 
	\[
	dY_i(t)=\left(-Y_i(t)+f_i(t)\right)\,dt + \sum_{j=1}^m \sigma_{ij}(t)\,dB_j(t), \quad t\geq 0,
	\] 
	and letting $Y_i$ be the component of the $\mathbb{R}^d$--valued process $Y$, we will get a nice differential system for $Z=X-Y$.  
	Here it is understood that $B$ is an $m$--dimensional standard Brownian motion, $\sigma$ is a $d\times m$--matrix valued function, and $f$ is $\mathbb{R}^d$--valued. By stochastic time change arguments, each $Y_i$ obeys an equation of the form \eqref{eq.Y1}, and $Z=X-Y$ obeys the differential equation 
	\[
	Z'(t)=(\nu \ast Z)(t)+g(t), \quad t\geq 0
	\]   
	where $g(t)=(\nu\ast Y)(t)+Y(t)$. Once again, therefore, with $r$ standing for the differential resolvent, we have 
	\[
	X(t)=r(t)X(0)+Y(t)+\int_0^t r(t-s)\{(\nu\ast Y)(s)+Y(s)\}\,ds, \quad t\geq 0,
	\]
	so the dependence of $f$ and $\sigma$ on $X$ is captured completely by $Y$, and conversely, necessary conditions on $Y$ may be inferred  via the representation 
	\[
	Y(t)=X(t)-e^{-t}X(0)-\int_0^t e^{-(t-s)}\{(\nu\ast X)(s)+X(s)\}\,ds, \quad t\geq 0,
	\]
	which is established by 
	writing the Volterra equation in the form 
	\[
	dX(t)=\left(-X(t)+ f(t) +(\nu\ast X)(t)+X(t)\right)\,dt + \sigma(t)\,dB(t),
	\]
	treating the terms in $X$ in the drift as a perturbation, and writing down the corresponding variation of constants formula for $X$.
	
	In recent works on deterministic and stochastic linear Volterra differential equations, the authors have used this approach to determine necessary and sufficient conditions on the forcing terms for solutions to lie in $L^p$ spaces \cite{AL:2024Lp}. We note also that conditions of the form 
	\[
	\lim_{t\to\infty}
	\sup_{0\leq \theta\leq 1}\left|\int_t^{t+\theta} f(s)\,ds \right|=0
	\]
	have been known to be necessary and sufficient for any solution of \eqref{eq.y1} to tend to zero (see Strauss and Yorke~\cite{SY67a,SY67b}). The uniformity has been dispensed with, in the context of Volterra differential equations by Gripenberg, Londen and Staffans~\cite{GLS} by means of an interesting decomposition Lemma, which the authors re--proved in \cite{AL:2023(AppliedMathLetters)}, and which is generalised here. A corollary of their result is that  
	\[
	\lim_{t\to\infty} 	\int_t^{t+\theta}f(s)\,ds = 0, \quad \text{for each $\theta>0$}
	\]  
	is sufficient for $y(t)\to 0$ as $t\to\infty$, and indeed this condition  is also necessary as is easily confirmed through the identity
	\[
	\int_{t}^{t+\theta} f(s)\,ds = y(t+\theta)-y(t)+\int_t^{t+\theta} y(s)\,ds,
	\]
	which is obtained by integrating the differential equation and rearranging. In fact, we find the asymptotic behaviour of the family of functions  
	\[
	t\mapsto \int_{(t-\theta)^+}^t f(s)\,ds=:f_\theta(t)
	\]
	(rather than $f$ itself) is what is important for the $L^p$ stability of solutions of linear deterministic Volterra differential equations (the necessary and sufficient condition for solutions of \eqref{eq.volterra} to be in $L^p$ is that $f_\theta$ to be in $L^p$ for all $\theta>0$). The authors have also recently shown that the asymptotic behaviour of the family of functions $f_\theta$ characterises the case in which solutions of \eqref{eq.volterra} have long--run time averages \cite{AL:2024Cesaro}.  
	
	It should be pointed out that all such stability results require conditions on $f_\theta$ only, and do not appear to depend on the rate of decay of solutions of the unperturbed differential (or integrodifferential) equation, or to whether the differential system is scalar or multi--dimensional. We shall demonstrate this for finite dimensional ODEs in this paper. In part, therefore, this justifies focussing attention on the very simple differential equation \eqref{eq.y1}.  
	
	As we will see in Part II, the situation for stochastic equations (or stochastic perturbations) appears to be similar. For instance, it has been known for several years that the pathwise behaviour of $Y_0$ (namely whether it  tends to zero, is bounded but not convergent, or is unbounded, a.s.) depends on $\sigma$ entirely through the asymptotic behaviour of the sequence $\sigma_1^2$ defined by 
	\[
	n\mapsto \sigma_1^2(n):=\int_{n-1}^{n}\sigma^2(s)\,ds \]
	so once again, it is the \textit{behaviour of the integral of forcing terms on fixed moving compact intervals} that appears decisive. See \cite{ACR:2011(DCDS)}. This is also reflected in studying $L^p$ stability of solutions of Volterra equations of the form 
	\[
	dX(t)=\int_{[0,t]} \nu(ds)X(t-s) \,dt + \sigma(t)\,dB(t), \quad t\geq 0, 
	\]
	for which solutions are in $L^p$ a.s. (for $p\geq 2$, for instance) if and only if 
	\[
	t\mapsto \int_{t}^{t+1} \sigma^2(s)\,ds \in L^{p/2}(0,\infty).
	\]
	
	In this paper, in contrast to recent studies on $L^p$ spaces (for finite $p$) we will study \emph{pointwise} asymptotic behaviour of the solutions of the equations \eqref{eq.y1}, \eqref{eq.Y01} and \eqref{eq.Y1}; that is to say, we are interested in the $L_\infty$ behaviour of solutions. In the case of stochastic equations, we are interested in pathwise (or almost sure dynamics), rather than the behaviour in mean, mean square or $p$--th mean more generally, though again we note that good behaviour of forcing terms on compact intervals appears to be salient for such equations in general also, as is explored in Appleby and Lawless \cite{AL:2023(AppliedNumMath)}. 
	
	As pointed out already, the necessary and sufficient conditions for the convergence to zero and boundedness of \eqref{eq.Y01} and \eqref{eq.y1} have already been established. Here, we extend these results to deal with the doubly perturbed equation \eqref{eq.Y1}. We also give necessary and sufficient conditions for the solutions of \eqref{eq.y1}, \eqref{eq.Y01} and \eqref{eq.Y1} to lie in certain $L_\infty$ weighted spaces. In particular, we give necessary and sufficient conditions on when 
	\[
	\frac{|y(t)|}{\gamma(t)}, \quad \frac{|Y_0(t)|}{\gamma(t)}, \quad \frac{|Y(t)|}{\gamma(t)}
	\]  
	have finite and non--trivial limits superior, particularly in the case where $\gamma$ is a so--called subexponential function (that is to say, a function that grows or tends to zero in a way that is slower than any real exponential function) or a certain broad class of functions which grow no faster than exponentially. Such results are also carried out in the deterministic case when $\gamma$ is an increasing function. In all cases, we find that it is the behaviour of $f_\theta$ or $\sigma^2_1$ that decides the asymptotic behaviour. To whet the appetite, the following result is typical:
	\begin{theorem}
		Let $f$ be continuous, and $y$ be the solution of \eqref{eq.y1}. Let $\gamma$ be a continuous, positive and non--decreasing function. Then the following are equivalent:
		\begin{enumerate}
			\item[(A)] There exists $\delta>0$ such that for some $K>0$ and $\theta'\in(0,\delta]$ we have 
			\begin{gather*}
				\limsup_{t\to\infty} \frac{\left|\int_{t-\theta'}^t f(s)\,ds\right|}{\gamma(t)}>0, \\
				\left|\int_{(t-\theta)^+}^t f(s)\,ds\right| \leq K\gamma(t), \quad \theta\in [0,\delta], t\geq 0;
			\end{gather*}	
			\item[(B)] For each $\delta>0$ there exists $K=K(\delta)>0$ and $\theta'\in (0,\delta]$ such that 
			\begin{gather*}
				\limsup_{t\to\infty} \frac{\left|\int_{t-\theta'}^t f(s)\,ds\right|}{\gamma(t)}>0, \\
				\left|\int_{(t-\theta)^+}^t f(s)\,ds\right| \leq K\gamma(t), \quad \theta\in [0,\delta], t\geq 0;
			\end{gather*}
			\item[(C)] For every $\delta>0$, there exists $K=K(\delta)>0$ such that 
			\[
			\left|\int_{(t-\theta)^+}^t f(s)\,ds\right| \leq K\gamma(t), \quad \theta\in [0,\delta], t\geq 0,
			\]
			and 
			\[
			\text{Leb}\left(\theta\in [0,\delta]:
			\limsup_{t\to\infty} \frac{\left|\int_{t-\theta}^t f(s)\,ds\right|}{\gamma(t)}=0\right)=0.
			\]
			\item[(D)] There exists $\Lambda_\gamma y\in (0,\infty)$ such that 
			\[
			\Lambda_\gamma y := \limsup_{t\to\infty} \frac{|y(t)|}{\gamma(t)}.
			\] 
		\end{enumerate}	
	\end{theorem}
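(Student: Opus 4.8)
The plan is to reduce everything to two elementary identities and then to exploit the semigroup structure of the set of ``bad'' delays. Write $f_\theta(t)=\int_{(t-\theta)^+}^t f(s)\,ds$, $L(\theta)=\limsup_{t\to\infty}|f_\theta(t)|/\gamma(t)\in[0,\infty]$, and $\Lambda=\Lambda_\gamma y$. Restarting the variation of constants formula at $t-\theta$ gives, for $t\ge\theta$,
\[
y(t)=e^{-\theta}y(t-\theta)+\int_{t-\theta}^t e^{-(t-s)}f(s)\,ds,
\]
and a Fubini rearrangement of $\int_{t-\theta}^t(1-e^{-(t-s)})f(s)\,ds$ converts the remaining integral into
\[
\int_{t-\theta}^t e^{-(t-s)}f(s)\,ds=f_\theta(t)-\int_0^\theta e^{-(\theta-\phi)}f_\phi(t-\theta+\phi)\,d\phi ,
\]
while integrating the ODE over $[t-\theta,t]$ gives the dual identity $f_\theta(t)=y(t)-y(t-\theta)+\int_{t-\theta}^t y(s)\,ds$. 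Throughout I use that $\gamma$, being positive, continuous and non-decreasing, is bounded below by $\gamma(0)>0$ and satisfies $\gamma(s)\le\gamma(t)$ for $s\le t$; this lets me trade $\gamma$-values at nearby points and discard $e^{-t}$-terms against $\gamma$.

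First I will show that the upper-bound clause common to (A), (B), (C) --- valid for one $\delta$, equivalently for every $\delta$ --- is equivalent to $\Lambda<\infty$. If $\Lambda<\infty$, the dual identity together with monotonicity of $\gamma$ yields $|f_\theta(t)|\le K(\delta)\gamma(t)$ for $\theta\in[0,\delta]$, $t\ge0$ (the range $t\in[0,\delta]$ handled by compactness and $\gamma\ge\gamma(0)$). Conversely, if $|f_\theta(t)|\le K\gamma(t)$ for $\theta\in[0,\delta]$, the decomposition identity bounds $|\int_{t-\delta}^t e^{-(t-s)}f(s)\,ds|$ by $2K\gamma(t)$, and iterating $|y(t)|\le e^{-\delta}|y(t-\delta)|+2K\gamma(t)$ --- summing the geometric tail and absorbing $e^{-n\delta}|y(t-n\delta)|\le \mathrm{const}\cdot e^{-t}$ against $\gamma$ --- gives $\Lambda\le 2K/(1-e^{-\delta})<\infty$. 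Next, if $\Lambda=0$ the dual identity gives $|f_\theta(t)|\le\varepsilon(2+\theta)\gamma(t)$ eventually, so $L(\theta)=0$ for every $\theta$; contrapositively, $L(\theta')>0$ for some $\theta'$ forces $\Lambda>0$. These two facts already deliver (B)$\Rightarrow$(A) (trivial), (A)$\Rightarrow$(D), and (C)$\Rightarrow$(D) (if $\Lambda=0$ then $L\equiv 0$ on $[0,\infty)$, contradicting the nullity clause in (C)), as well as the upper-bound clauses of (B) and (C) from (D).

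The remaining and genuinely substantial point is $\Lambda\in(0,\infty)\Rightarrow$ the measure-zero clause of (C), i.e.\ that $E:=\{\theta\ge0:L(\theta)=0\}$ meets every $[0,\delta]$ in a Lebesgue-null set; this also supplies the delay $\theta'\in(0,\delta]$ with $L(\theta')>0$ needed in (B) and in (D)$\Rightarrow$(C). From $f_{\theta_1+\theta_2}(t)=f_{\theta_1}(t)+f_{\theta_2}(t-\theta_1)$ and monotonicity of $\gamma$, $E$ is an additive sub-semigroup of $[0,\infty)$ containing $0$, and it is Borel (as $L$ is). If $E$ were non-null, the Steinhaus theorem applied to $E+E\subseteq E$ shows $E$ contains an interval, hence a half-line $(a,\infty)$. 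Fixing $\theta>a$, we have $f_\theta(t)=o(\gamma(t))$ since $\theta\in E$; in the decomposition identity the tail $\int_a^\theta e^{-(\theta-\phi)}f_\phi(t-\theta+\phi)\,d\phi$ is $o(\gamma(t))$ by dominated convergence --- domination coming from the already-established bound $|f_\phi(r)|\le c(\phi)\gamma(r)$ with $c(\phi)$ at most linear in $\phi$, obtained by chopping $[r-\phi,r]$ into windows of fixed length --- while the head $\int_0^a$ is $\le \mathrm{const}\cdot e^{-\theta}\gamma(t)$. Hence $\limsup_t|y(t)-e^{-\theta}y(t-\theta)|/\gamma(t)\le\eta(\theta)$ with $\eta(\theta)\to0$ as $\theta\to\infty$, and running the same geometric iteration as above yields $\Lambda\le\eta(\theta)/(1-e^{-\theta})$; letting $\theta\to\infty$ forces $\Lambda=0$, a contradiction. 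Assembling: (D)$\Rightarrow$(B),(C) follows from the first block (upper bounds) and this argument (nullity of $E\cap[0,\delta]$, and the existence of $\theta'$); the implications into (D) were noted above, and (B)$\Rightarrow$(A), so all four conditions are equivalent.

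I expect the main obstacle to be precisely this last step --- converting a positive lower bound on $|y(t)|/\gamma(t)$ into almost-everywhere non-smallness of $f_\theta$ --- since naive estimates control $f_\theta$ only for $\theta$ in a fixed window. The device that rescues it is the observation that the exceptional set $E$ is a semigroup, so Steinhaus forces it to swallow a half-line if it has positive measure, after which the variation-of-constants recursion, run with an arbitrarily \emph{large} step $\theta$, drives $y$ to $o(\gamma)$; the one calculation needing care is making the dominated-convergence bound uniform in $\phi$ over the growing interval $[0,\theta]$, which is why the crude sub-interval estimate for $|f_\phi|$ is required.
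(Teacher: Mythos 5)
Your argument is correct and takes a genuinely different route from the paper's. The paper builds on the decomposition lemma (Lemma~\ref{lemma.yintermsofftheta}), which represents $y$ through global convolutions of $f_\Delta$ and of the primitive $I$ of $f-f_\Delta/\Delta$ with the exponential kernel, then establishes separate $O(\gamma)$ and $o(\gamma)$ characterisations (Theorems~\ref{lemma.fthetaymonotone} and~\ref{lemma.fthetaymonotonelittleo}) before the measure--theoretic refinement in Theorem~\ref{lemma.fthetaymonotoneexactbettercondition}, which it obtains from subadditivity of $L$, Steinhaus applied to $Z-Z$, and a chopping argument propagating $L(\theta)=0$ from a short interval to all of $[0,\Delta]$, after which it appeals back to the $o(\gamma)$ theorem. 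You instead work with the restarted Duhamel formula $y(t)=e^{-\theta}y(t-\theta)+\int_{t-\theta}^t e^{-(t-s)}f(s)\,ds$ together with a Fubini rearrangement writing the Duhamel integral entirely in terms of $\{f_\phi:0\le\phi\le\theta\}$, which produces a one--step recursion. That recursion supplies both the finiteness of $\Lambda$ (geometric iteration with a fixed step) and --- the key departure --- a self-contained closing argument: $E=\{\theta\ge 0: L(\theta)=0\}$ is a Borel additive sub-semigroup, so Steinhaus applied to $E+E\subseteq E$ forces a non--null $E$ to contain a half-line, and running the recursion with an arbitrarily large step $\theta$ from that half-line drives $\Lambda$ to zero, contradicting $\Lambda>0$. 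What your route buys is that the measure-theoretic step and the $o(\gamma)$-characterisation are absorbed into one iteration, with no auxiliary chopping lemma needed to bootstrap from short to long windows; what the paper's route buys is a suite of standalone $O(\gamma)$/$o(\gamma)$ theorems which it then reuses verbatim in the subexponential-weight and multidimensional settings. The one point to write out carefully when you formalise this is the dominating bound $|f_\phi(r)|\le c(\phi)\gamma(r)$ with $c(\phi)$ at most linear in $\phi$ (from chopping $[r-\phi,r]$ into windows of length at most $\delta$), since it is needed both for the dominated-convergence estimate of the tail $\int_a^\theta$ and for the $e^{-\theta}$ decay of the head $\int_0^a$; you do flag this, and the verification is routine.
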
 
	We also show how results extend naturally, and with little modification to finite dimensions, as well as establishing in the case of deterministic equations, a characterisation of the asymptotic behaviour derivative of the solution. 
	
	In light of the importance of the asymptotic behavior of the average of $f$ in studying the asymptotic behaviour of solutions, it might be questioned what role if any $f$ itself may play. It turns out that to understand this, we should look at the asymptotic behaviour of the derivative. Using the result above to orient our discussion, we find that if both the solution and the derivative are to be $O(\gamma)$, then the forcing function itself, as well as the integral average, must be $O(\gamma)$, and conversely. On the other hand, the derivative will have a faster growing upper bound than the solution if and only if the forcing function has a faster growing bound than the average. Thus, ``bad'' behaviour in the derivative, relative to the solution, is symptomatic of well--behaved average behaviour and bad pointwise behaviour in the forcing term. Radically different asymptotic behaviour of forcing terms and their averages can be observed: we give examples which demonstrate that the forcing term can have arbitarily large growth bounds, and yet the average can be bounded, or even tend to zero. This shows the potential utility and sharpness of results.  
	
	\section{Topics not examined in the paper}
	Before moving forward to general results, we pause to indicate some questions we have not addressed within this article, and our reasons for doing so.
	
	\subsection{The general scalar linear equation} 
	The reader may wonder why we do not study perturbations of the general linear equation viz.,
	\begin{equation} \label{eq.ualpha}
		u'(t)=-\alpha u(t), \quad t\geq 0
	\end{equation}
	where $\alpha>0$. In part, this is because we consider in the last section general $d$--dimensional deterministic equations which are perturbed, and recover exactly the same type bounds on solutions when the growth of $f_\theta$ is monotone, or exhibits slower than exponential growth or decay features. Note, moreover, by appealing to the theorem above, that the order of these bounds depend on $f_\theta$, but not on the strength of the mean reversion of the system, as is apparently captured by the presence of the exponential term in $y$. Moreover, we can recover the general scalar case \eqref{eq.ualpha} by setting $d=1$. However, this argument is rather indirect, so we give a direct reason. 
	
	Suppose that we perturb \eqref{eq.ualpha} deterministically, to get 
	\begin{equation} \label{eq.xalpha}
		x'(t)=-\alpha x(t)+f(t), \quad t\geq 0; \quad x(0)=\zeta.
	\end{equation}
	Consider $z(t)=x(t)-y(t)$, and set $\phi(t)=(1-\alpha)y(t)$ for $t\geq 0$. Then $z'(t)=-\alpha z(t)+(1-\alpha)y(t)$ for $t\geq 0$, so 
	\[
	x(t)=y(t)+z(t)=e^{-\alpha t}\zeta + y(t)+\int_0^t e^{-\alpha(t-s)}(1-\alpha)y(s)\,ds, \quad t\geq 0.
	\]
	Hence we see that $x$ depends on $f$ purely through $y$, and because our results suggest the asymptotic behaviour of $y$ is governed by $f$ through $f_\theta$, no fundamentally new behaviour seems to be generated in the general case.
	
	\subsection{Exponentially decaying perturbations} In our introduction, we have focussed on the case where the size of the perturbations (measured via $f_\theta$) are growing, or are dominated by exponential functions (either growing or decaying). This leaves the important class of exponentially (or faster than exponentially) decaying perturbations unexamined. Here, the situation can be approached rather directly, and it is not clear to us that consideration of $f_\theta$ leads to the best formulation of results, especially if we are not greatly concerned with capturing exactly the Liapunov exponent which characterises the rate at which solutions decay. This is however an important question, and demands attention, particularly for stochastic equations, and we will return to it in a later work. For now, we justify our elimination of exponentially decaying perturbations (or exponential decay in solutions) by giving simple characterisation of exponential stability for \eqref{eq.xalpha}. 
	
	We do not claim the result below is novel, but give a proof since (a) it is elementary and short (b) it makes the work self--contained and (c) it demonstrates that the approach is different from what follows for larger perturbations. 
	\begin{theorem}
		Let $\alpha>0$, $f$ be continuous, and suppose $x$ is the unique continuous solution to \eqref{eq.xalpha}. Then the following are equivalent
		\begin{enumerate}
			\item[(A)] The limit 
			\[
			L:=\lim_{t\to\infty} \int_0^t f(s)\,ds
			\]
			is finite, so the function $F(t) = L-\int_0^t f(s)\,ds$ is 
			well--defined and obeys $|F(t)|\leq Ce^{-\eta t}$ for all $t\geq 0$ and some constants $C>0$ and $\eta>0$; 
			\item[(B)] There exists $\beta>0$ and $K>0$ such that 
			$|x(t)|\leq Ke^{-\beta t}$ for all $t\geq 0$. 
		\end{enumerate}	
	\end{theorem}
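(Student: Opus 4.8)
The plan is to work throughout from the variation of constants formula
\[
x(t) = e^{-\alpha t}\zeta + \int_0^t e^{-\alpha(t-s)} f(s)\,ds, \quad t\geq 0,
\]
and to pass between $f$ and its tail integral $F$ using the two elementary identities $F'(t)=-f(t)$, $F(0)=L$, and — read off from \eqref{eq.xalpha} itself — $f(t)=x'(t)+\alpha x(t)$. The whole argument is then a matter of integration by parts and bookkeeping of exponential rates, with essentially no analytic obstacle; I expect the only point needing a word of care to be the resonant case $\alpha=\eta$ in the forward implication, where a spurious polynomial factor appears and must be absorbed into a marginally slower exponential.

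For $(\mathrm{A})\Rightarrow(\mathrm{B})$: assuming $L$ finite, $F$ is well-defined with $F(t)\to 0$ as $t\to\infty$, $F(0)=L$, and $f=-F'$. Substituting into the convolution term and integrating by parts yields
\[
\int_0^t e^{-\alpha(t-s)} f(s)\,ds = -F(t) + e^{-\alpha t}L + \alpha\int_0^t e^{-\alpha(t-s)} F(s)\,ds ,
\]
so that $x(t) = e^{-\alpha t}(\zeta+L) - F(t) + \alpha\int_0^t e^{-\alpha(t-s)}F(s)\,ds$. The first two terms are bounded by $|\zeta+L|e^{-\alpha t}$ and $Ce^{-\eta t}$. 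For the integral term I use $|F(s)|\leq Ce^{-\eta s}$ and compute $\int_0^t e^{-\alpha(t-s)}e^{-\eta s}\,ds$ explicitly: when $\alpha\neq\eta$ this equals $(e^{-\eta t}-e^{-\alpha t})/(\alpha-\eta)$, and when $\alpha=\eta$ it equals $t\,e^{-\alpha t}$. In either case the contribution is dominated by $Ke^{-\beta t}$ with $\beta:=\min(\alpha,\eta)$ when $\alpha\neq\eta$, and with any fixed $\beta\in(0,\alpha)$ when $\alpha=\eta$ (since $t e^{-\alpha t}\leq C_\beta e^{-\beta t}$). Adding the $e^{-\alpha t}(\zeta+L)$ term gives $|x(t)|\leq K e^{-\beta t}$ for all $t\geq 0$, which is $(\mathrm{B})$.

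For $(\mathrm{B})\Rightarrow(\mathrm{A})$: integrating \eqref{eq.xalpha} directly gives
\[
\int_0^t f(s)\,ds = x(t)-\zeta+\alpha\int_0^t x(s)\,ds .
\]
Since $|x(t)|\leq Ke^{-\beta t}$, we have $x(t)\to 0$ and $x\in L^1([0,\infty))$, so the right-hand side converges and $L=-\zeta+\alpha\int_0^\infty x(s)\,ds$ is finite; in particular $F$ is well-defined. Subtracting, $F(t)=L-\int_0^t f(s)\,ds = -x(t)+\alpha\int_t^\infty x(s)\,ds$, whence
\[
|F(t)| \leq |x(t)| + \alpha\int_t^\infty |x(s)|\,ds \leq K e^{-\beta t} + \frac{\alpha K}{\beta}\,e^{-\beta t} = K\Bigl(1+\frac{\alpha}{\beta}\Bigr)e^{-\beta t},
\]
so $(\mathrm{A})$ holds with $\eta=\beta$. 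This also makes transparent the remark in the text that the exponent recovered here need not equal the Liapunov exponent of the system, and that no genuinely new behaviour arises in the general scalar case.
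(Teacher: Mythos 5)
Your proof is correct and follows essentially the same route as the paper's: the forward direction substitutes $f=-F'$, integrates by parts in the variation-of-constants formula, and treats the three cases $\alpha\gtrless\eta$, $\alpha=\eta$; the converse integrates the equation, identifies $L$, and bounds the tail. The only cosmetic difference is that you write $F(t)=-x(t)+\alpha\int_t^\infty x(s)\,ds$ directly rather than integrating over $[t,T]$ and letting $T\to\infty$, but these are the same computation.
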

	\begin{proof}
		Suppose (A) holds. Then $F$ is well--defined and have $F'(t)=-f(t)$ for $t\geq 0$. Thus, by variation of constants, noting that $F'(t)=-f(t)$ and then integrating by parts, we get  
		\begin{align*}
			x(t)
			&=\zeta e^{-\alpha t} - e^{-\alpha t}\int_0^t e^{\alpha s}F'(s)\,ds\\
			&=\zeta e^{-\alpha t} - F(t)+F(0)e^{-\alpha t}+\int_0^t\alpha e^{-\alpha(t-s)} F(s)\,ds
		\end{align*}
		Hence
		\[
		|x(t)|\leq (|\zeta|+|L|)e^{-\alpha t} + Ce^{-\eta t} 
		+C\alpha e^{-\alpha t}\int_0^t e^{(\alpha-\eta) s}\,ds.
		\]
		If $\alpha>\eta$, the integral is order $e^{(\alpha-\eta)t}$, so the last term on the right hand side is $O(e^{-\eta t})$, and we may take $\beta=\eta$ to get (B). If $\alpha<\eta$, the integral tends to a limit, and the last term is $O(e^{-\alpha t})$, and we may take $\beta=\alpha$ to get (B). If $\alpha=\eta$, the integral is $t$, so any choice of $\beta\in (0,\eta)$ gives (B). Hence (A) implies (B).
		
		To show (B) implies (A), integrate the equation over $[0,t]$ and rearrange to get 
		\[
		\int_0^t f(s)\,ds= 
		x(t)-\zeta +\alpha \int_0^t x(s)\,ds.  
		\] 
		Since (B) holds, all the terms on the right hand side have finite limits as $t\to\infty$, so $L:=\lim_{t\to\infty} \int_0^t f(s)\,ds$ is finite. Take $t\geq 0$ fixed and let $T>t$. Integration over $[t,T]$ and rearranging gives 
		\[
		\int_t^T f(s)\,ds = x(T)-x(t) + \alpha\int_t^T x(s)\,ds.
		\]
		The righthand side has absolute value bounded by 
		\[
		K\left(e^{-\beta T} + e^{-\beta t} + \alpha \int_t^T e^{-\beta s}\,ds\right).
		\]
		Letting $T\to\infty$, we see that left hand side tends to $F(t)$; thus, for any $t\geq 0$, by taking $T\to\infty$, we get   
		\begin{align*}
			|F(t)|&=\left|\lim_{T\to\infty} \int_t^T f(s)\,ds \right|\\
			&\leq \limsup_{T\to\infty} 	K\left(e^{-\beta T} + e^{-\beta t} + \alpha \int_t^T e^{-\beta s}\,ds\right)
			=  K\left(1 + \frac{\alpha}{\beta}\right)e^{-\beta t}.
		\end{align*}
		Thus (A) holds, with $\eta=\beta$. 
	\end{proof}
	
	\subsection{Multidimensional equations}
	Finally, the reader may wonder whether the results quoted here are only possible to prove in the scalar case, and that extensions to higher dimensions do not carry over (either readily or at all). To dispel such worries (in the deterministic case at least) we state in the final section some analogues of Theorems in Section 3 for the deterministic equation 
	\begin{equation} \label{eq.xmult0}
		x'(t)=Ax(t)+F(t), \quad t\geq 0; \quad x(0)=\zeta
	\end{equation}
	where the solution is in $\mathbb{R}^d$, $A$ is a $d\times d$ matrix all of whose eigenvalues have negative real parts, $F\in C([0,\infty);\mathbb{R}^d)$ and $\zeta\in \mathbb{R}^d$. We sketch the proof of one or two results to satisfy the interested reader that these are in fact obvious generalisations of scalar proofs. Moreover, the reader will notice that these proofs rely directly on results proven in the scalar case, especially by application of scalar results to components. This partly justifies, on non--didactic grounds, our decision to work out the details in the simpler, scalar case first.     
	
	\section{Deterministically Forced Scalar Equation}
	Introduce the ordinary differential equation 
	\begin{equation} \label{eq.y}
		y'(t)=-y(t)+f(t), \quad t\geq 0; \quad y(0)=0.
	\end{equation}  
	There is no loss in generality in taking the initial condition to be zero. Suppose for simplicity that
	\begin{equation} \label{eq.fcns}
		f\in C([0,\infty);\mathbb{R}).
	\end{equation}  
	Scrutiny of the proofs in this section reveals that $f$ being  locally in $L^1$ is in fact sufficient for the results in this section; however, we take $f$ continuous throughout to align with assumptions for stochastic equations.	
	
	Under these assumptions on $f$, \eqref{eq.y} has a unique continuous solution, given by 
	\[
	y(t)=\int_0^t e^{-(t-s)}f(s)\,ds, \quad t\geq 0. 
	\]
	\subsection{A decomposition lemma}
	The following result is known, but we will discuss it in order to motivate the direction of our results.  
	\begin{theorem} \label{thm.detasymptoticstab} 
		Let $f\in C([0,\infty);\mathbb{R})$. Then the following are equivalent
		\begin{enumerate}
			\item[(A)] $y(t)\to 0$ as $t\to\infty$;
			\item[(B)] For every $\theta>0$, $\lim_{t\to\infty} \int_t^{t+\theta} f(s)\,ds =0$; 
		\end{enumerate}
	\end{theorem}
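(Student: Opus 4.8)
The plan is to prove both implications directly using the variation-of-constants representation $y(t)=\int_0^t e^{-(t-s)}f(s)\,ds$ together with the elementary identity obtained by integrating \eqref{eq.y} over $[t,t+\theta]$, namely
\[
\int_t^{t+\theta} f(s)\,ds = y(t+\theta)-y(t)+\int_t^{t+\theta} y(s)\,ds.
\]

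\textbf{(A) $\Rightarrow$ (B).} This direction is the easy one. Assuming $y(t)\to 0$ as $t\to\infty$, fix $\theta>0$. In the identity above, $y(t+\theta)-y(t)\to 0$, and since $y$ is bounded and $y(s)\to 0$ we have $\int_t^{t+\theta} y(s)\,ds\to 0$ by, say, dominated convergence on the fixed-length interval (or simply by bounding the integrand by $\sup_{s\geq t}|y(s)|\to 0$). Hence $\int_t^{t+\theta} f(s)\,ds\to 0$.

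\textbf{(B) $\Rightarrow$ (A).} This is where the real work lies, and it is essentially the content of the decomposition lemma the authors refer to. The idea I would pursue is to split the convolution kernel. Write, for any $N\in\N$ and $t>N$,
\[
y(t)=\int_{t-N}^t e^{-(t-s)}f(s)\,ds + \int_0^{t-N} e^{-(t-s)}f(s)\,ds.
\]
The second (tail) term is easy: since $y$ is continuous hence bounded on compacts, but more directly, one should first argue that (B) forces $f_\theta(t)=\int_{t-\theta}^t f(s)\,ds$ to be bounded — indeed each $f_\theta$ is continuous and has limit $0$, so it is bounded on $[0,\infty)$; summing finitely many unit-length pieces shows $\int_0^{t-N}e^{-(t-s)}|f(s)|\,ds$ is controlled by a geometric series in $e^{-1}$ times $\sup_t|f_1(t)|$, times the decaying factor $e^{-N}$, so it is $\leq C e^{-N}$ uniformly in $t$. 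Thus given $\varepsilon>0$ we first choose $N$ so that the tail is below $\varepsilon$.

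For the head term $\int_{t-N}^t e^{-(t-s)}f(s)\,ds$ with $N$ now fixed, I would integrate by parts against the primitive of $f$. Set $f_\theta(t)=\int_{t-\theta}^t f(s)\,ds$ and observe $\frac{d}{ds}\left(-\int_{s}^{t} f(u)\,du\right)$-type manipulations; more cleanly, substitute $r=t-s$ to get $\int_0^N e^{-r} f(t-r)\,dr$, and write $f(t-r)$ as $-\frac{d}{dr}\left(\int_{t-r}^t f(u)\,du\right) = -\frac{d}{dr} f_r(t)$ where now $t$ is fixed and the derivative is in $r$. Integration by parts in $r$ then yields
\[
\int_0^N e^{-r}f(t-r)\,dr = -e^{-N}f_N(t) + \int_0^N e^{-r} f_r(t)\,dr,
\]
so $|y(t)| \leq Ce^{-N} + e^{-N}|f_N(t)| + \int_0^N e^{-r}|f_r(t)|\,dr$. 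Now for each fixed $r\in(0,N]$, hypothesis (B) gives $f_r(t)\to 0$ as $t\to\infty$; since $r\mapsto f_r(t)$ is bounded uniformly (by $\sup|f_1|\cdot\lceil r\rceil$, say, hence by $N\sup|f_1|$ for $r\le N$) and $e^{-r}$ is integrable, the dominated convergence theorem gives $\int_0^N e^{-r}|f_r(t)|\,dr\to 0$ as $t\to\infty$. Hence $\limsup_{t\to\infty}|y(t)|\leq Ce^{-N}\leq \varepsilon$, and since $\varepsilon$ was arbitrary, $y(t)\to 0$.

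\textbf{Main obstacle.} The delicate point is the uniform-in-$r$ domination needed to apply dominated convergence in the integral $\int_0^N e^{-r} f_r(t)\,dr$: one must verify that (B) — which is pointwise in $\theta$ — together with continuity of $f$ yields a bound on $\sup_{t\ge 0, 0\le r\le N}|f_r(t)|$. This follows because $|f_r(t)|\le \sum_{j} |f_1(t-j)|$ over the $\lceil r\rceil$ unit subintervals covering $[t-r,t]$ (with the last piece shorter), and each $f_1$, being continuous with $\lim_{t\to\infty}f_1(t)=0$, is bounded on $[0,\infty)$. Getting this bookkeeping clean — and making sure the tail estimate and the integration by parts handle the boundary term at $s=t-N$ correctly — is the only real subtlety; everything else is routine.
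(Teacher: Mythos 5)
Your (A) $\Rightarrow$ (B) direction is correct and coincides with the paper's argument via identity \eqref{eq.aveRepy}. For the converse, your integration-by-parts step (which, after fixing the sign slip you acknowledge, reads $\int_0^N e^{-r}f(t-r)\,dr = e^{-N}f_N(t)+\int_0^N e^{-r}f_r(t)\,dr$, since $f(t-r)=\tfrac{d}{dr}f_r(t)$) is a valid and genuinely different decomposition from Lemma~\ref{lemma.yintermsofftheta}; both express $y$ in terms of the family $\{f_\theta\}$, and both proofs then stand or fall on the same point, which you correctly flag as "the only real subtlety."

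Your resolution of that subtlety does not work, and this is a genuine gap. You claim $|f_r(t)|\le \sum_j|f_1(t-j)|$ over unit subintervals of $[t-r,t]$ "with the last piece shorter", but the last piece is an integral of $f$ over a window of length $\rho := r-\lfloor r\rfloor\in[0,1)$, i.e.\ a term of the form $f_\rho(\cdot)$, which is exactly the kind of quantity you are trying to bound and is \emph{not} controlled by $f_1$. Hypothesis (B) tells you each individual $f_\theta$ is bounded on $[0,\infty)$, but gives no a priori bound on $\sup_{\theta\in[0,1]}\sup_{t\ge 0}|f_\theta(t)|$, and that uniformity is what the dominated convergence step requires. (Your tail estimate $\int_0^{t-N}e^{-(t-s)}|f(s)|\,ds\le Ce^{-N}$ has the same defect, and is worse as written since it places $|f|$ inside the integral; $f$ itself is unbounded in precisely the interesting cases, as the paper's example $f(t)=\beta'(t)\sin\beta(t)$ shows.) The missing ingredient is a Baire category or Steinhaus argument: from (B), the sets $E_n=\{\theta\in[0,1]:|f_\theta(t)|\le 1 \text{ for all } t\ge n\}$ are closed with $\bigcup_n E_n=[0,1]$, so some $E_{n_0}$ contains an interval $[a,b]$; subtracting two $\theta$'s in $[a,b]$ gives a uniform bound on $f_\rho$ for $\rho\in[0,b-a]$ and large $t$, which then extends to $\rho\in[0,1]$ by the concatenation you intended. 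This is exactly the content of Lemma 15.9.2 of Gripenberg, Londen and Staffans, which the paper cites for this very implication and re-proves in weighted form (via the sets $Q_m$ and Steinhaus's theorem) inside the proof of Theorem~\ref{lemma.fthetaysubexponentiallittleo}. Once that uniform bound is in place, your dominated-convergence argument goes through and the rest of the proposal is sound.
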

	The fact that (B) implies (A) is due to \cite[Lemma 15.9.2]{GLS}  but the contribution in Strauss and Yorke \cite{SY67a,SY67b} established the importance of a condition of this type; Strauss--Yorke require uniform convergence in $\theta$. As pointed out in \cite{AL:2023(AppliedNumMath)}, it is easy to see that (A) implies (B) by integrating across \eqref{eq.y} and re--arranging: 
	\begin{equation}\label{eq.aveRepy}
		\int_{t-\theta}^t f(s)\,ds = y(t) - y(t-\theta) +\int_{t-\theta}^t y(s)\,ds.
	\end{equation}
	This identity is important, and we wish to highlight another crucial set of identities connecting the average of $f$ and $y$. A variant of it appears in \cite{GLS} and also in \cite{AL:2023(AppliedMathLetters)} (in these works, the parameter $\Delta$ below is equal to unity).
	\begin{lemma} \label{lemma.yintermsofftheta} Let $\Delta>0$. 
		Let $f=0$ for $t<0$. Set $f_\theta(t)=\int_{t-\theta}^t f(s)\,ds$ for $t\geq 0$. Define $\delta(t)=f(t)-f_\Delta(t)/\Delta$ for $t\geq 0$. Then with 
		\begin{equation} \label{eq.I}
			I(t)=\int_0^t \delta(s)\,ds, \quad t\geq 0,
		\end{equation}
		we have
		\begin{eqnarray} \label{eq.intdelta}
			I(t)&=& \frac{1}{\Delta}\int_0^\Delta f_\theta(t) \,d\theta, \quad t\geq \Delta,\\
			I(t)&=& \int_0^t f(s)\,ds -\frac{1}{\Delta}\int_0^t \int_0^s f(u)\,du\,ds
			=f_\Delta(t)-\frac{1}{\Delta}\int_0^t f_\Delta(s)\,ds, \quad t\in [0,\Delta], \nonumber
		\end{eqnarray}
		and if $y$ obeys \eqref{eq.y}, then 
		\begin{equation} \label{eq.yrepave}
			y(t)=\frac{1}{\Delta}\int_0^t e^{-(t-s)}f_\Delta(s)\,ds + I(t) - \int_0^t e^{-(t-s)} I(s)\,ds.
		\end{equation}
	\end{lemma}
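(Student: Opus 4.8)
The plan is to establish the three assertions in turn — the closed form for $I$ on $[0,\Delta]$, the moving-average representation \eqref{eq.intdelta} for $t\geq\Delta$, and finally the identity \eqref{eq.yrepave} — the only analytic inputs being Fubini's theorem (legitimate since $f$ is continuous, hence locally bounded) and one elementary integration by parts. \emph{For $t\in[0,\Delta]$:} since $f\equiv 0$ on $(-\infty,0)$ and $s-\Delta\leq 0$ whenever $s\leq\Delta$, we have $f_\Delta(s)=\int_{s-\Delta}^s f(u)\,du=\int_0^s f(u)\,du$ for $s\in[0,\Delta]$, and in particular $f_\Delta(t)=\int_0^t f(u)\,du$. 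Substituting this into $\delta(s)=f(s)-f_\Delta(s)/\Delta$ and integrating over $[0,t]$ gives $I(t)=\int_0^t f(s)\,ds-\frac1\Delta\int_0^t\int_0^s f(u)\,du\,ds$, which in turn equals $f_\Delta(t)-\frac1\Delta\int_0^t f_\Delta(s)\,ds$ by the identities just noted; this is the second line of \eqref{eq.intdelta}.

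\emph{For $t\geq\Delta$:} start from $I(t)=\int_0^t f(s)\,ds-\frac1\Delta\int_0^t f_\Delta(s)\,ds$ and compute $\int_0^t f_\Delta(s)\,ds=\int_0^t\int_{s-\Delta}^s f(u)\,du\,ds$ by reversing the order of integration. For fixed $u\geq 0$ the constraints $\{0\leq s\leq t,\ s-\Delta\leq u\leq s\}$ force $s\in[u,\min(u+\Delta,t)]$, so the inner measure is $\Delta$ when $u\leq t-\Delta$ and $t-u$ when $t-\Delta\leq u\leq t$, which yields
\[
\int_0^t f_\Delta(s)\,ds=\Delta\int_0^{t-\Delta}f(u)\,du+\int_{t-\Delta}^t(t-u)f(u)\,du.
\]
Subtracting $\frac1\Delta$ times this from $\int_0^t f(s)\,ds$ cancels the $[0,t-\Delta]$ contribution and leaves $I(t)=f_\Delta(t)-\frac1\Delta\int_{t-\Delta}^t(t-u)f(u)\,du$. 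A second, entirely similar application of Fubini to $\frac1\Delta\int_0^\Delta f_\theta(t)\,d\theta=\frac1\Delta\int_0^\Delta\int_{t-\theta}^t f(s)\,ds\,d\theta$ (for fixed $s\in[t-\Delta,t]$ the admissible $\theta$ fill the interval $[t-s,\Delta]$, of length $\Delta-t+s$) produces exactly the same quantity, proving the first line of \eqref{eq.intdelta}. Alternatively, one may note that both sides have $t$-derivative $\delta(t)=f(t)-f_\Delta(t)/\Delta$ and agree at $t=\Delta$ by the previous paragraph.

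\emph{For \eqref{eq.yrepave}:} rewrite the defining relation as $f(s)=\delta(s)+f_\Delta(s)/\Delta$, insert it into $y(t)=\int_0^t e^{-(t-s)}f(s)\,ds$, and split the integral. The $f_\Delta$ part is precisely the first term on the right of \eqref{eq.yrepave}. In the remaining term $\int_0^t e^{-(t-s)}\delta(s)\,ds$, write $\delta=I'$ with $I(0)=0$ and integrate by parts, using $\tfrac{d}{ds}e^{-(t-s)}=e^{-(t-s)}$; this gives $I(t)-\int_0^t e^{-(t-s)}I(s)\,ds$. Summing the two contributions gives \eqref{eq.yrepave}.

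The only step demanding genuine care is the change of order of integration for $t\geq\Delta$: one must simultaneously account for the truncation of $f$ at the origin and for the two regimes $u\leq t-\Delta$ and $t-\Delta\leq u\leq t$ in the reversed domain of integration. Once that bookkeeping is carried out, everything else is direct substitution and a one-line integration by parts.
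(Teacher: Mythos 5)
Your proof is correct and complete. Note that the paper itself omits the proof, stating only that it is a trivial reworking of the argument in \cite{AL:2023(AppliedMathLetters)} (which handles the special case $\Delta=1$), so there is no in-text proof to compare against line by line. Your argument uses exactly the ingredients that this brief remark points to: decompose $f=\delta + f_\Delta/\Delta$, apply Fubini to convert iterated integrals of $f$ into the moving-average form, and integrate by parts with $\delta=I'$, $I(0)=0$, to produce \eqref{eq.yrepave}. Your bookkeeping at the origin (replacing $\int_{s-\Delta}^s$ by $\int_0^s$ when $s\leq\Delta$ because $f$ vanishes on the negative axis, and splitting the reversed domain of integration at $u=t-\Delta$ when $t\geq\Delta$) is the essential care required, and you handle it correctly; the alternative verification of the $t\geq\Delta$ identity — both sides are $C^1$ with derivative $\delta(t)$ and agree at $t=\Delta$ — is also valid and gives a useful cross-check. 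This is a sound, self-contained substitute for the omitted proof.
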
 
	The proof is a trivial re--working of the argument given in \cite{AL:2023(AppliedMathLetters)}, so we omit it. 
	
	Notice that \eqref{eq.yrepave} implies the following important fact: \textit{for any $\Delta>0$}, $y$ depends on $f$ \textit{only through} the family of functions $\{f_\theta:\theta\in [0,\Delta]\}$. This should lead the reader to the (correct) conclusion that if $f_\theta$ has the appropriate behaviour for all $\theta$ in an interval, \textit{however short that interval is}, this behaviour will be inherited by $y$. This claim is guided by the intuition from admissibility theory that integrals (such as $I$) and convolutions preserve nice behaviour from $f_\theta$, and that such integrals and convolutions furnish the representation of $y$ in \eqref{eq.yrepave}. On the other hand, \eqref{eq.aveRepy} suggests that $f_\theta$ can be written purely in terms of $y$, so appropriate behaviour in $y$ can only arise if $f_\theta$ possesses that behaviour, for more or less any choice of $\theta$. A good deal of our analysis involves making these intuitions precise.  
	
	\subsection{Upper bounds}
	We use \eqref{eq.aveRepy}, \eqref{eq.intdelta} and \eqref{eq.yrepave} extensively. Here is an application to asymptotic behaviour when we desire the differential equation \eqref{eq.y} to have a non--decreasing upper bound.
	\begin{theorem} \label{lemma.fthetaymonotone}
		Let $f\in C([0,\infty);\mathbb{R})$, and $y$ be the unique continuous solution to \eqref{eq.y}. 
		Let $\gamma$ be positive, non--decreasing and in $C(0,\infty)$. Then the following are equivalent:
		\begin{enumerate}
			\item[(A)] There exists $\Delta>0$ and $C=C(\Delta)>0$ such that $|\int_{(t-\theta)^+}^t f(s)\,ds|\leq C\gamma(t)$ for all $\theta\in [0,\Delta]$;
			\item[(B)] For all $\Delta>0$, there is a $C=C(\Delta)>0$ such that $|\int_{(t-\theta)^+}^t f(s)\,ds|\leq C\gamma(t)$ for all $\theta\in [0,\Delta]$;
			\item[(C)] $y(t)=\text{O}(\gamma(t))$, as $t\to\infty$;	
		\end{enumerate}
	\end{theorem}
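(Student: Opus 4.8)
The plan is to prove the cycle of implications $(B)\Rightarrow(A)$, $(A)\Rightarrow(C)$, $(C)\Rightarrow(B)$, since $(B)\Rightarrow(A)$ is immediate (take any single $\Delta$). The real content lies in the other two implications, and the tool for both directions is the pair of identities from Lemma~\ref{lemma.yintermsofftheta}, together with the observation that a non-decreasing weight $\gamma$ is well-behaved under convolution with the exponential kernel.

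For $(A)\Rightarrow(C)$: fix the $\Delta>0$ and $C>0$ supplied by (A), and use the representation \eqref{eq.yrepave}, namely $y(t)=\frac{1}{\Delta}\int_0^t e^{-(t-s)}f_\Delta(s)\,ds + I(t) - \int_0^t e^{-(t-s)} I(s)\,ds$. By (A) with $\theta=\Delta$ we have $|f_\Delta(s)|\leq C\gamma(s)$, and by \eqref{eq.intdelta} the function $I$ is an average of $f_\theta(t)$ over $\theta\in[0,\Delta]$ (for $t\geq\Delta$), so $|I(t)|\leq C\gamma(t)$ as well (the values on $[0,\Delta]$ are bounded by a constant, hence also $O(\gamma)$ since $\gamma$ is positive and non-decreasing, so bounded below on compacts by a positive constant). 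It then remains to check that $\int_0^t e^{-(t-s)}\gamma(s)\,ds = O(\gamma(t))$; this is where monotonicity of $\gamma$ enters — since $\gamma$ is non-decreasing, $\gamma(s)\leq\gamma(t)$ for $s\leq t$, so the convolution is bounded by $\gamma(t)\int_0^t e^{-(t-s)}\,ds\leq\gamma(t)$. Assembling the three terms gives $|y(t)|\leq C'\gamma(t)$ for a suitable constant, which is (C).

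For $(C)\Rightarrow(B)$: suppose $|y(t)|\leq M\gamma(t)$ for $t\geq t_0$ and (enlarging $M$) for all $t\geq 0$. Fix an arbitrary $\Delta>0$; we must bound $|\int_{(t-\theta)^+}^t f(s)\,ds|$ uniformly for $\theta\in[0,\Delta]$. Use the identity \eqref{eq.aveRepy}, i.e. $\int_{t-\theta}^t f(s)\,ds = y(t)-y(t-\theta)+\int_{t-\theta}^t y(s)\,ds$ (with the usual convention when $t-\theta<0$, where $y$ is replaced by $0$ and the lower limit by $0$). Then
\[
\left|\int_{(t-\theta)^+}^t f(s)\,ds\right| \leq |y(t)| + |y(t-\theta)| + \int_{(t-\theta)^+}^t |y(s)|\,ds \leq M\gamma(t) + M\gamma(t-\theta) + M\int_{t-\theta}^t \gamma(s)\,ds.
\]
Since $\gamma$ is non-decreasing, $\gamma(t-\theta)\leq\gamma(t)$ and $\int_{t-\theta}^t\gamma(s)\,ds\leq\theta\gamma(t)\leq\Delta\gamma(t)$, giving the bound $M(2+\Delta)\gamma(t)$, which is (B) with $K(\Delta)=M(2+\Delta)$.

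The main obstacle — really the only point requiring care rather than routine estimation — is handling the behaviour near $t=0$ (the $(t-\theta)^+$ truncation) and justifying that constant bounds on compact initial intervals are absorbed into $O(\gamma)$ bounds; this works precisely because $\gamma$ is continuous and strictly positive, hence bounded away from $0$ on $[0,T]$ for each $T$. One must also be slightly careful that the constants in $(A)\Rightarrow(C)$ are genuinely uniform in $t$ and do not secretly depend on a growing quantity; tracking through \eqref{eq.yrepave} term by term as above shows they do not. No growth restriction on $\gamma$ beyond monotonicity is needed, which is the point of the theorem.
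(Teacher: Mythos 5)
Your proposal is correct and follows essentially the same route as the paper: the implication cycle is reordered (you do $(B)\Rightarrow(A)\Rightarrow(C)\Rightarrow(B)$ versus the paper's $(C)\Rightarrow(B)\Rightarrow(A)\Rightarrow(C)$), but both directions rest on the same pair of identities \eqref{eq.aveRepy} and \eqref{eq.yrepave} together with \eqref{eq.intdelta}, and both exploit monotonicity of $\gamma$ in exactly the same places (bounding $\gamma(t-\theta)$, $\int_{t-\theta}^t\gamma$, and the exponential convolution by $\gamma(t)$). The only cosmetic difference is your handling of $I$ on $[0,\Delta]$ via a compactness-and-positivity argument, where the paper instead estimates explicitly from the second formula in Lemma~\ref{lemma.yintermsofftheta} to get $|I(t)|\leq 2K\gamma(t)$; both are valid.
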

	\begin{proof}
		Let $\Delta>0$ be arbitrary. We show that (C) implies (B).
		By (C), $|y(t)|\leq K \gamma(t)$ for some $K>0$. Apply the triangle inequality to \eqref{eq.aveRepy}, and using the monotonicity of $\gamma$, we get for all $t\geq \theta$ 
		\[
		\left|
		\int_{t-\theta}^t f(s)\,ds\right| \leq K\gamma(t) +K\gamma(t-\theta) +\int_{t-\theta}^t K\gamma(s)\,ds\leq 2K\gamma(t)+\theta\gamma(t)\leq (2K+K\Delta)\gamma(t),
		\]  
		which is (B) with $C=K(2+\Delta)$. For $t\in [0,\theta]$ 
		\[
		\int_{(t-\theta)^+}^t f(s)\,ds=\int_0^t f(s)\,ds = y(t)+\int_0^t y(s)\,ds,
		\]
		The righthand side has absolute value less that $K\gamma(t)+tK\gamma(t)$, which in turn is less than $K(1+\theta)\gamma(t)\leq K(1+\Delta)\gamma(t)$, since $\theta\leq \Delta$. Hence (B) holds once more with $C=K(2+\Delta)$.   
		
		Clearly (B) implies (A), so it remains to show that (A) implies (C) to complete the chain of implications. In this part of the proof $\Delta>0$, rather than being arbitrary, is merely a constant for which (A) holds. By this, we have $|f_\theta(t)|\leq K\gamma(t)$ for all $t\geq 0$, and $\theta\in [0,\Delta]$. For $t\geq \Delta$, 
		we therefore have by \eqref{eq.intdelta} the estimate 
		\[
		|I(t)|\leq \frac{1}{\Delta}\int_0^\Delta |f_\theta(t)|\,d\theta\leq K\gamma(t).
		\]
		For $t\in [0,\Delta]$, we have $|f_\Delta(t)|\leq K\gamma(t)$, and by the equation after \eqref{eq.intdelta}, we get 
		\[
		|I(t)|\leq |f_\Delta(t)| + \frac{1}{\Delta}\int_0^t |f_\Delta(s)| \,ds \leq K\gamma(t)+\frac{1}{\Delta}\int_0^t K\gamma(s)\,ds \leq 2K\gamma(t).
		\]
		Thus, $|I(t)|\leq 2K\gamma(t)$ for all $t\geq 0$. Now, take the triangle inequality in \eqref{eq.yrepave}, apply the above estimates for $I$ and $f_1$, and use the monotonicity of $\gamma$ to get  
		\begin{align*} 
			|y(t)|&\leq \frac{1}{\Delta}\int_0^t e^{-(t-s)}|f_\Delta(s)|\,ds + |I(t)| + \int_0^t e^{-(t-s)} |I(s)|\,ds\\
			&\leq \frac{1}{\Delta}\int_0^t e^{-(t-s)}K\gamma(s) + 2K\gamma(t) + \int_0^t e^{-(t-s)} 2K\gamma(s)\,ds\\
			&\leq \frac{K}{\Delta}\gamma(t)+4K\gamma(t)=K(4+1/\Delta)\gamma(t),
		\end{align*}
		as needed. 
	\end{proof}
	
	It is easy to use admissibility results due to Corduneanu \cite{cor1,cor2,cor3} to show that whenever the eigenvalues of $A$ have negative real parts, $\gamma$ is increasing and $|f(t)|\leq C\gamma(t)$ for all $t\geq 0$, then the solution of the general finite--dimensional linear equation 
	\[
	x'(t)=Ax(t)+f(t)
	\]
	obeys $|x(t)|\leq K'\gamma(t)$, for some $K'>0$, which is the type of growth bound we have found above. However, as we have seen, such a pointwise condition on $f$ is not necessary to give solutions bounded by $\gamma$, although it is clearly sufficient (and indeed implies the bound in part (A) of the above theorem, with $K=C$). 
	
	A direct application of the admissibility approach should be approached cautiously, since if $f$ has very large pointwise bounds this may leading to an overly--conservative estimate of the solution, while $f_\theta$ may be growing much more slowly. 
	
	To see this, let $\beta\in C^2((0,\infty);\mathbb{R})$ be strictly increasing with $\beta(t), \beta'(t) \to \infty$ as $t\to\infty$. Let $f : \mathbb{R}^\to \mathbb{R}$ be given by $f(t)= \beta'(t) \sin(\beta(t))$ for $t\geq 0$. We have the pointwise bound $|f(t)|\leq \beta'(t)$ for $t\geq 0$; moreover this bound is sharp, since it is achieved at all times $t$ for which $\sin(\beta(t))=\pm1$. Thus, by considering a very rapidly growing $\beta$ and naively applying the admissibility theory, we get a very rapidly growing upper estimate on the rate of growth of solutions of \eqref{eq.y}.    
	
	The very rapid oscillation of the function gives rise to radically different (and milder behaviour in $f_\theta$. Integration by substitution gives 
	\[
	\int_1^t f(s)\,ds =-\cos(\beta(t))+\cos(1)
	%+\int_{\beta(1)}^{\beta(t)} \frac{\cos(u)}{\beta'(\beta^{-1}(u))}\,du.
	\]
	%Integrating the last integral by parts yields
	%\[
	%+\int_{\beta(1)}^{\beta(t)} \frac{\cos(u)}{\beta'(\beta^{-1}(u))}\,du
	%=\frac{\sin(\beta(t))}{\beta'(t)}-\frac{\sin(\beta(1))}{\beta'(1)}+\int_{1}^t \frac{\beta''(u)}{\beta'(u)^2}\sin(\beta(u))\,du.
	%\]
	%Since $\beta'(t)\to\infty$, and 
	%\[
	%\int_{1}^t \left|\frac{\beta''(u)}{\beta'(u)^2}\sin(\beta(u))\right|\,du
	%\leq \int_1^{\beta'(t)} \frac{1}{s^2}\,ds,
	%\]
	%we see that last integral tends to a finite limit. Hence 
	%\[
	%\int_1^t f(s)\,ds = -\cos(\beta(t))+L(t)
	%\]
	%where $L(t)\to L^\ast$ as $t\to\infty$. 
	Thus 
	\[
	\int_{t-\theta}^t f(s)\,ds  
	=-\cos(\beta(t)) +\cos(\beta(t-\theta)),
	\]
	so $|f_\theta(t)|\leq 2$ for all $\theta\in [0,1]$ and $t\geq 0$, but $f_\theta(t)$ does not tend to zero as $t\to\infty$. 
	
	Now we notice if we take $\gamma(t)=1$ for all $t\geq 0$, we have a characterisation of boundedness of solutions (we state the result momentarily). Later, we will be able to characterise the situation where $y(t)$ is bounded, but not convergent to zero. 
	
	\begin{theorem} \label{lemma.fthetaymonotonebdd}
		Let $f\in C([0,\infty);\mathbb{R})$, and $y$ be the unique continuous solution to \eqref{eq.y}.  Then the following are equivalent:
		\begin{enumerate}
			\item[(A)] There exists $\Delta>0$ and $C>0$ such that $|\int_{(t-\theta)^+}^t f(s)\,ds|\leq C$ for all $\theta\in [0,\Delta]$ and $t\geq 0$; 
			\item[(B)] For every $\Delta>0$ there is a $C=C(\Delta)>0$ such that $|\int_{(t-\theta)^+}^t f(s)\,ds|\leq C$ for all $\theta\in [0,\Delta]$ and $t\geq 0$;
			\item[(C)] There is $D>0$ such that $|y(t)|\leq D$ for all $t\geq 0$.	
		\end{enumerate}
	\end{theorem}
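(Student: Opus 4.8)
The plan is to deduce this statement directly from Theorem~\ref{lemma.fthetaymonotone} by taking the weight to be the constant unit function $\gamma(t)=1$. This $\gamma$ is positive, continuous on $(0,\infty)$, and (trivially) non--decreasing, so all the hypotheses of Theorem~\ref{lemma.fthetaymonotone} are satisfied. With this choice $C\gamma(t)=C$, so parts (A) and (B) of Theorem~\ref{lemma.fthetaymonotone} transcribe verbatim into parts (A) and (B) of the present statement.

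The only point requiring a line of care is matching part (C). Theorem~\ref{lemma.fthetaymonotone}(C) with $\gamma\equiv 1$ reads $y(t)=\mathrm{O}(1)$ as $t\to\infty$, i.e.\ $\limsup_{t\to\infty}|y(t)|<\infty$, whereas the present (C) asks for a bound $|y(t)|\le D$ holding for \emph{every} $t\ge 0$. These are equivalent because $y$ is continuous on $[0,\infty)$: from $\limsup_{t\to\infty}|y(t)|<\infty$ one obtains a bound on some half--line $[T,\infty)$, and continuity gives a bound on the compact interval $[0,T]$, so their maximum is a global bound $D$; the reverse implication is immediate. Hence (C) here and (C) of Theorem~\ref{lemma.fthetaymonotone} (at $\gamma\equiv1$) coincide, and the three--way equivalence follows.

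There is no genuine obstacle: the result is a corollary of Theorem~\ref{lemma.fthetaymonotone}. If a self--contained argument were preferred, one could simply rerun the proof of Theorem~\ref{lemma.fthetaymonotone} with $\gamma\equiv1$: the implication (C)$\Rightarrow$(B) uses \eqref{eq.aveRepy} with the monotonicity estimate replaced by the trivial $\int_{t-\theta}^{t}1\,ds=\theta\le\Delta$, and (A)$\Rightarrow$(C) uses \eqref{eq.intdelta} and \eqref{eq.yrepave} exactly as before, with all occurrences of $\gamma(s)$ and $\gamma(t)$ set to $1$. Invoking the already--proven theorem is the cleaner route, so that is the one I would take.
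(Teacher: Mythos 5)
Your proposal is correct and is exactly the route the paper intends: the text immediately preceding the theorem explicitly notes that taking $\gamma(t)\equiv 1$ in Theorem~\ref{lemma.fthetaymonotone} yields this characterisation of boundedness, and no separate proof is given. Your extra observation that the asymptotic bound $y(t)=\mathrm{O}(1)$ and the global bound $|y(t)|\le D$ agree via continuity of $y$ on compacts is a sensible bit of bookkeeping (the paper's proof of Theorem~\ref{lemma.fthetaymonotone} already reads $y=\mathrm{O}(\gamma)$ as the global bound $|y(t)|\le K\gamma(t)$, so the two formulations are used interchangeably there as well).
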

	%	
	%	The hypothesis (A) in Theorem~\ref{lemma.fthetaymonotone} requires uniform control of $f_\theta$ for all $\theta\in [0,1]$. The choice of $[0,1]$ is arbitrary; requiring control on an interval of the form $[0,\delta]$ for any choice of $\delta>0$ is equivalent. This holds for all hypotheses of this type in the paper, whether the controlling function $\gamma$ is subexponential, regularly varying or monotone.  
	
	Next, we show that if the bound on $y$ is subexponential (which can allow for decay of $y$ to zero), this can only arise from subexponentially bounded $f_\theta$. To be precise, we give a definition of a subexponential function, which varies slightly from others. To avoid confusion in terminology in other works, we call such subexponential functions ``finite lag subexponential'' functions. 
	
	\begin{definition} A function $\gamma\in C([0,\infty);(0,\infty))$ is a \textit{finite lag subexponential} function if 
		\begin{equation} \label{eq.psisub}
			\lim_{t\to\infty} \frac{\gamma(t-\theta)}{\gamma(t)}=1, \quad\theta>0.
		\end{equation}
	\end{definition}
	
	Throughout this paper, we will call such functions ``subexponential functions'' freely, and any functions called subexponential will mean a function obeying this definition. The condition \eqref{eq.psisub} was introduced for real--valued functions in \cite{AppRey02}, where the class of subexponential functions also possess a property concerning their convolutions, and are restricted to be (absolutely) integrable, developing ideas of subexponential distributions introduced in 
	\cite{Choveretal}. 
	
	We make some brief comments, and prove some important facts, about this class of functions; some of these remarks are made in \cite{AppRey02} also. 
	
	First, the convergence in \eqref{eq.psisub} is in fact uniform on $\theta$--compacts. To see this, define the function $\rho:[0,\infty)\to (0,\infty)$ by 
	\[
	\rho(t)=\gamma(\log t), \quad t\geq 1,
	\]
	with $\rho(t)=\gamma(0)$ for $t\in [0,1]$. Then $\rho$ is positive and continuous, and, for any $\lambda>0$, obeys  
	\[
	\frac{\rho(\lambda t)}{\rho(t)}=\frac{\gamma(\log t+\log \lambda)}{\gamma(\log t)}\to 1, \quad t\to\infty,
	\]
	using \eqref{eq.psisub}. Hence $\rho$ is a slowly varying function at infinity (see \cite{BGT}). By the uniform convergence theorem for slowly varying functions (see e.g., \cite{BGT}), this means, for any $0<\Lambda<1$, that we have 
	\[
	\lim_{t\to\infty} \sup_{\lambda\in [\Lambda,1]} \left|\frac{\rho(\lambda t)}{\rho(t)}-1\right|=0.
	\]
	Hence, putting $\theta:=-\log \lambda\in [-\log \Lambda,0]$, $\Theta:=-\log\Lambda>1$, we get 
	\begin{equation} \label{eq.subexpuniform}
		\lim_{t\to\infty} \sup_{\theta\in [0,\Theta]} \left|\frac{\gamma(t- \theta)}{\gamma(t)}-1\right|=0,
	\end{equation}
	and since the choice of $\Lambda\in (0,1)$ is arbitrary, so is the choice of $\Theta>0$. \eqref{eq.subexpuniform} gives the desired uniform convergence. 
	
	It is not hard to show that one can equally consider ``advanced'' intervals, as well as ``delayed'' ones. From \eqref{eq.psisub}, it readily follows for each $\theta>0$ that 
	\[
	\lim_{t\to\infty} \frac{\gamma(t+\theta)}{\gamma(t)}=1,
	\]
	and retracing the argument above, we get for any $\Theta>0$ that 
	\[
	\lim_{t\to\infty} \sup_{\theta\in [0,\Theta]} \left| \frac{\gamma(t+\theta)}{\gamma(t)}-1\right|=0.
	\]
	
	Second, if we take the function $\Gamma(t)=\int_{t}^{t+1} \gamma(s)\,ds$, we see that $\Gamma$ is in $C^1((0,\infty);(0,\infty))$, and by the last limit and the identity
	\[
	\frac{\Gamma(t)}{\gamma(t)}-1
	=\int_0^1 \left\{\frac{\gamma(t+\theta)}{\gamma(t)}-1\right\}\,d\theta,
	\] 
	we have that $\Gamma(t)\sim \gamma(t)$ as $t\to\infty$. Note therefore that $\Gamma(t+1)/\Gamma(t)\to 1$ as $t\to\infty$, and thus we have 
	\[
	\frac{\Gamma'(t)}{\Gamma(t)} = \frac{\gamma(t+1)}{\Gamma(t+1)}\cdot \frac{\Gamma(t+1)}{\Gamma(t)}-\frac{\gamma(t)}{\Gamma(t)}\to 1-1=0,
	\] 
	as $t\to\infty$. Therefore, it is easy to manufacture subexponential functions with the same asymptotic properties, but with additional control and regularity features. In particular, we have just shown that for every subexponential function $\gamma$ there exists a $C^1$ subexponential function $\Gamma$ such that $\Gamma(t)\sim \gamma(t)$ as $t\to\infty$, and $\Gamma'(t)/\Gamma(t)\to 0$ as $t\to\infty$. 
	
	Third, we notice that asymptotic integration of $\Gamma'(t)/\Gamma(t)\to 0$ as $t\to\infty$ leads to $\log \Gamma(t)/t\to 0$ as $t\to\infty$, and in turn to $\log \gamma(t)/t\to 0$ as $t\to\infty$. This implies for every $\epsilon>0$ that 
	\[
	\lim_{t\to\infty} e^{\epsilon t} \gamma(t)=+\infty, \quad 
	\lim_{t\to\infty} e^{-\epsilon t} \gamma(t)=0,
	\]
	so $\gamma$ asymptotically dominates every negative exponential function, but is dominated by every positive exponential function. This fact justifies coining the term subexponential for this class of functions.  
	
	Fourth, a consequence of the second remark is that for any $\alpha>0$ we have 
	\begin{equation} \label{eq.subconvexp}
		\int_0^t e^{-\alpha(t-s)}\gamma(s)\,ds \sim \frac{1}{\alpha}\gamma(t), \quad t\to \infty
	\end{equation}
	which can be proven by applying L'H\^opital's rule to first factor on the righthand side of   
	\[
	\frac{\int_0^t e^{\alpha s}\gamma(s)\,ds}{e^{\alpha t} \gamma(t)}
	=\frac{\int_0^t e^{\alpha s}\gamma(s)\,ds}{e^{\alpha t} \Gamma(t)} \cdot \frac{\Gamma(t)}{\gamma(t)},
	\] 
	which, by virtue of the fact that $\gamma$ is subexponential, is an indeterminate limit of the form $+\infty/+\infty$. Specifically, we have
	\[
	\lim_{t\to\infty} \frac{\int_0^t e^{\alpha s}\gamma(s)\,ds}{e^{\alpha t} \Gamma(t)} 
	=
	\lim_{t\to\infty}
	\frac{e^{\alpha t}\gamma(t)}{\alpha e^{\alpha t} \Gamma(t)+e^{\alpha t}\Gamma'(t)} 
	=\frac{1}{\alpha},
	\] 
	using the facts that $\Gamma'(t)=\text{o}(\Gamma(t))$ and $\Gamma(t)\sim \gamma(t)$ as $t\to\infty$.
	
	We are using subexponential functions as weight functions (so that $\gamma$ is positive on $[0,\infty)$), but the class of functions that are asymptotic to a subexponential weight is a vector space. Functions of the form $\gamma_1(t)\sim At^\alpha$, and $\gamma_2(t)\sim A(\log t)^\alpha$, $\gamma_3(t) \sim At^\alpha\log t^\beta\log\log t^\delta$ for $\alpha$, $\beta$, $\delta\neq 0$ (and $A\neq 0$) are examples of slowly growing or decaying functions asymptotic to a subexponential weight. In general, the class of regularly varying functions at infinity (see \cite{BGT}) are subexponential in this sense. Functions which grow to infinity or decay to zero faster than a power law, but still slower than exponentially, are also subexponential. Examples include $\gamma_4(t)\sim A e^{Bt^\alpha}$ where $A$ and $B$ are non--zero, and $\alpha\in (0,1)$. 
	
	It should also be pointed out that a very simple subexponential function is $\gamma(t)=1$ for all $t\geq 0$. This means that many ``unweighted'' results can be read off as simple corollaries of our general theorems by taking the weight function $\gamma$ to be the unit constant function. 
	
	As well as describing a wide range of growth and decay types, subexponential weights are very robust to many operations. 
	For instance the sum, the product, the scalar product, and the maximum of such subexponential weights are themselves subexponential weights; on the other hand, if $\gamma$ is a finite lag subexponential function with 
	$\int_0^t \gamma(s)\,ds\to\infty$ as $t\to\infty$, then so is 
	\[
	\Gamma_1(t):=\int_0^t \gamma(s)\,ds
	\]    
	while if $\gamma$ is in $L^1(0,\infty)$, the (well--defined) function
	\[
	\Gamma_2(t):=\int_t^\infty \gamma(s)\,ds
	\] 
	is a subexponential weight, so subexponential weights are robust to infinite operations, such as integration. We have already tacitly pointed out that, for any $\delta>0$, the function
	\[
	\Gamma_\delta(t):=\frac{1}{\delta}\int_t^{t+\delta} \gamma(s)\,ds
	\]
	is also subexponential and is such that $\Gamma_\delta(t)\sim \gamma(t)$ as $t\to\infty$. 
	
	We have all the ingredients to prove the next result. 
	\begin{theorem} \label{lemma.fthetaysubexponential}
		Let $f\in C([0,\infty);\mathbb{R})$, and $y$ be the unique continuous solution to \eqref{eq.y}. 
		Let $\gamma$ be (finite lag) subexponential. Then the following are equivalent:
		\begin{enumerate}
			\item[(A)] There is $\Delta>0$ and $C=C(\Delta)>0$ such that $|\int_{(t-\theta)^+}^t f(s)\,ds|\leq C\gamma(t)$ for all $\theta\in [0,\Delta]$;
			\item[(B)] For every $\Delta>0$ there is a $C=C(\Delta)>0$ such that $|\int_{(t-\theta)^+}^t f(s)\,ds|\leq C\gamma(t)$ for all $\theta\in [0,\Delta]$;
			\item[(C)] $y(t)=\text{O}(\gamma(t))$, as $t\to\infty$;	
		\end{enumerate}
	\end{theorem}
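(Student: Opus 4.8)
The plan is to run the same three-step argument as in the proof of Theorem~\ref{lemma.fthetaymonotone}, establishing the cycle $(\text{C})\Rightarrow(\text{B})\Rightarrow(\text{A})\Rightarrow(\text{C})$, the middle implication being immediate. The only structural change is that the two appeals to monotonicity of $\gamma$ must be replaced by the subexponential toolkit developed above: the bound $\gamma(t-\theta)\le\gamma(t)$ is replaced by the uniform subexponential convergence \eqref{eq.subexpuniform}, and the bound $\int_0^t e^{-(t-s)}\gamma(s)\,ds\le\gamma(t)$ (which is what monotonicity gave) is replaced by the convolution asymptotic \eqref{eq.subconvexp} with $\alpha=1$.

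For $(\text{C})\Rightarrow(\text{B})$ I would fix $\Delta>0$ arbitrary, write $|y(t)|\le K\gamma(t)$, and apply the triangle inequality in \eqref{eq.aveRepy}. By \eqref{eq.subexpuniform} there is $T_0=T_0(\Delta)$ with $\gamma(s)\le 2\gamma(t)$ whenever $t-\Delta\le s\le t$ and $t\ge T_0$; hence for such $t$ and all $\theta\in[0,\Delta]$,
\[
\left|\int_{t-\theta}^t f(s)\,ds\right|\le K\gamma(t)+2K\gamma(t)+2K\Delta\gamma(t).
\]
On the remaining compact set of $t$-values (and for $t\in[0,\theta]$, where \eqref{eq.aveRepy} should be replaced by $\int_0^t f(s)\,ds=y(t)+\int_0^t y(s)\,ds$, valid since $y(0)=0$), joint continuity of $(t,\theta)\mapsto\int_{(t-\theta)^+}^t f(s)\,ds$ makes the left side bounded on that region while $\gamma$ attains a strictly positive minimum there, so the estimate survives after enlarging $C$. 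This yields $(\text{B})$, and $(\text{B})\Rightarrow(\text{A})$ is trivial.

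For $(\text{A})\Rightarrow(\text{C})$, $\Delta$ is now the constant supplied by $(\text{A})$, so $|f_\theta(t)|\le K\gamma(t)$ for all $t\ge0$ and $\theta\in[0,\Delta]$. Then \eqref{eq.intdelta} gives $|I(t)|\le K\gamma(t)$ for $t\ge\Delta$, and the second line of \eqref{eq.intdelta} together with the positive lower bound of $\gamma$ on the compact interval $[0,\Delta]$ gives $|I(t)|\le C'\gamma(t)$ for $t\in[0,\Delta]$; hence $|I(t)|\le C'\gamma(t)$ for all $t\ge 0$. Substituting the bounds for $|I|$ and $|f_\Delta|$ into \eqref{eq.yrepave}, taking absolute values, and estimating each convolution term by \eqref{eq.subconvexp} (so $\int_0^t e^{-(t-s)}\gamma(s)\,ds\le 2\gamma(t)$ for $t$ large) yields $|y(t)|\le C''\gamma(t)$ for $t$ large, i.e. $y(t)=O(\gamma(t))$; small $t$ is dispatched once more by continuity of $y$ and positivity of $\gamma$.

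I expect the bookkeeping to be entirely routine; the one genuine point of care — the main obstacle, such as it is — is that \eqref{eq.subexpuniform} and \eqref{eq.subconvexp} are asymptotic statements, so every step splits into a ``large $t$'' regime, where the subexponential estimates apply, and a compact initial regime, where one leans on continuity of $f_\theta$ and the strict positivity of the weight. In addition, in the $(\text{C})\Rightarrow(\text{B})$ step one must use the uniform (not merely pointwise) form of the subexponential limit, so that the constant $C$ stays independent of $\theta\in[0,\Delta]$.
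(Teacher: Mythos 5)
Your proposal is correct and follows essentially the same route as the paper's own proof: both use the identity \eqref{eq.aveRepy} together with the uniform subexponential limit \eqref{eq.subexpuniform} for $(\text{C})\Rightarrow(\text{B})$, and both use the decomposition \eqref{eq.intdelta}--\eqref{eq.yrepave} with the convolution asymptotic \eqref{eq.subconvexp} for $(\text{A})\Rightarrow(\text{C})$, splitting into a large-$t$ regime and a compact regime handled by continuity and positivity of $\gamma$. The bookkeeping matches the paper step for step.
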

	\begin{proof}
		Assume (C); we prove (B), which obviously implies (A). By (C), $|y(t)|\leq K\gamma(t)$ for all $t\geq 0$. Let $\Delta>0$ be arbitrary and $\theta\in [0,\Delta]$. For $t\geq \Delta\geq \theta$, by applying the triangle inequality to \eqref{eq.aveRepy}, we have 
		\[
		\frac{1}{\gamma(t)}\left|\int_{(t-\theta)^+}^t f(s)\,ds\right| \leq K+K\frac{\gamma(t-\theta)}{\gamma(t)}+K\int_{u=0}^\theta \frac{\gamma(t-u)}{\gamma(t)}\,du.
		\] 
		By the uniform convergence in the limit, there is a fixed $T>\Delta$ such that for all $u\in [0,\Delta]$, $\gamma(t-u)/\psi(t)\leq 2$ for $t\geq T$. Thus, for $t\geq T$, the righthand side is bounded by $(3+2\Delta)K$. Also, on the compact $(t,\theta)\in [\Delta,T]\times [0,\Delta]$ the continuous function 
		\[
		(t,\theta)\mapsto \frac{1}{\gamma(t)}\left|\int_{(t-\theta)^+}^t f(s)\,ds\right| =: F(t;\theta)
		\]  
		is uniformly bounded. Hence there is a $t,\theta$--independent positive constant $K'$ such that 
		\[
		\left|\int_{(t-\theta)^+}^t f(s)\,ds\right| \leq K'\gamma(t), \quad \theta\in [0,\Delta], \quad t\geq \Delta.
		\]
		To get a uniform estimate on the compact $[0,\Delta]\times [0,\Delta]$, simply appeal to the continuity of the function $F$ once again. This proves (B). 
		
		We have shown (C) implies (B) implies (A). It remains to show that (A) implies (C). To do this, we note by assumption we have $|f_\theta(t)|\leq K\gamma(t)$ for all $t\geq 0$ and $\theta\in [0,\Delta]$. Thus, by \eqref{eq.intdelta}, we have $|I(t)|\leq K\gamma(t)$ for all $t\geq \Delta$. For $t\in [0,\Delta]$, by the equation after \eqref{eq.intdelta} we have 
		\[
		|I(t)|\leq K\gamma(t)+K\int_0^t \gamma(s)\,ds \leq K'\gamma(t),
		\]
		for some $K'>K$, using the positivity and continuity of $\gamma$ to bound the integral. Thus, we have the overall estimate $|I(t)|\leq K'\gamma(t)$ for $t\geq 0$. Now, take the triangle inequality in \eqref{eq.yrepave}, scale by $\gamma$, and apply the above estimates for $I$ and $f_1$ to get 
		\begin{align*} 
			\frac{|y(t)|}{\gamma(t)}&\leq \frac{1}{\Delta}\frac{1}{\gamma(t)}\int_0^t e^{-(t-s)}|f_\Delta(s)|\,ds + \frac{|I(t)|}{\gamma(t)} + \frac{1}{\gamma(t)}\int_0^t e^{-(t-s)} |I(s)|\,ds\\
			&\leq \left(\frac{K}{\Delta}+K'\right)\frac{1}{\gamma(t)}\int_0^t e^{-(t-s)}\gamma(s)\,ds + K'. 
		\end{align*}  
		By \eqref{eq.subconvexp} with $\alpha=1$, the second factor in the first term on the right tends to 1 as $t\to\infty$, so by continuity is bounded (by $B$ say). Hence 
		$|y(t)|\leq ((K/\Delta+K')B+K')\gamma(t)$ for all $t\geq 0$, proving (B).
	\end{proof}
	
	\subsection{Lower bounds}
	The results we have obtained so far place upper bounds on the size of solutions. In order to get good lower bounds, we first characterise the situation when the solution is dominated by a weight function at infinity. We do this first in the case of subexponential weights. 
	\begin{theorem} \label{lemma.fthetaysubexponentiallittleo}
		Let $f\in C([0,\infty);\mathbb{R})$, and $y$ be the unique continuous solution to \eqref{eq.y}. 
		Let $\gamma$ be (finite lag) subexponential. Then the following are equivalent:
		\begin{enumerate}
			\item[(A)] There exists $\Delta>0$ such that $\int_{t-\theta}^t f(s)\,ds/\gamma(t)\to 0$ for all $\theta\in [0,\Delta]$;
			\item[(B)] For every $\Delta>0$, $\int_{t-\theta}^t f(s)\,ds/\gamma(t)\to 0$ for all $\theta\in [0,\Delta]$;
			\item[(C)] $y(t)=\text{o}(\gamma(t))$, as $t\to\infty$;	
		\end{enumerate}
	\end{theorem}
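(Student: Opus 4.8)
I would prove the cycle $(C)\Rightarrow(B)\Rightarrow(A)\Rightarrow(C)$. Here $(B)\Rightarrow(A)$ is immediate (``for every $\Delta$'' implies ``there exists $\Delta$''), so the content is in $(C)\Rightarrow(B)$ and $(A)\Rightarrow(C)$.

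For $(C)\Rightarrow(B)$, fix $\Delta>0$ and $\theta\in[0,\Delta]$ and use the identity \eqref{eq.aveRepy}, which for $t\geq\theta$ reads $\int_{t-\theta}^{t}f = y(t)-y(t-\theta)+\int_{t-\theta}^{t}y$. Dividing by $\gamma(t)$: the first term tends to $0$ by $(C)$; the second I would write as $\frac{y(t-\theta)}{\gamma(t-\theta)}\cdot\frac{\gamma(t-\theta)}{\gamma(t)}$ and kill using $(C)$ together with \eqref{eq.psisub}; for the third I would set $\bar\varepsilon(u):=\sup_{s\geq u}|y(s)|/\gamma(s)$, which decreases to $0$ by $(C)$, so that $|y(s)|\leq\bar\varepsilon(t-\theta)\gamma(s)$ on $[t-\theta,t]$, and then control $\frac{1}{\gamma(t)}\int_{t-\theta}^{t}\gamma(s)\,ds$ via the uniform subexponential limit \eqref{eq.subexpuniform}. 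All three pieces vanish, which is $(B)$.

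The substantive direction is $(A)\Rightarrow(C)$. The obstacle is that $(A)$ only supplies $f_\theta(t)/\gamma(t)\to 0$ for each \emph{fixed} $\theta$, whereas the representation \eqref{eq.yrepave} forces one to integrate in $\theta$, hence to control $f_\theta$ \emph{uniformly} in $\theta$. I would recover this uniform control by a Baire category argument: with $\Delta$ the constant of $(A)$, each $\theta\in[0,\Delta]$ has $f_\theta(\cdot)/\gamma(\cdot)$ bounded (it is continuous with limit $0$), so $[0,\Delta]=\bigcup_{N\in\mathbb N}E_N$ where $E_N:=\{\theta\in[0,\Delta]:|f_\theta(t)|\leq N\gamma(t)\ \forall t\geq 0\}$ is closed; hence some $E_N$ contains an interval $[a,b]$. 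The elementary splitting $f_\theta(t)=f_{a+\theta}(t)-f_a(t-\theta)$, valid once $t\geq a+\theta$ (both subscripts then lie in $[a,b]$), together with \eqref{eq.subexpuniform} and compactness on the bounded range of $t$, upgrades this to $|f_\theta(t)|\leq N'\gamma(t)$ for all $\theta\in[0,\Delta']$ and $t\geq 0$, with $\Delta':=b-a>0$. In other words, $(A)$ of the present theorem implies condition $(A)$ of Theorem \ref{lemma.fthetaysubexponential}, so in particular $y=\text{O}(\gamma)$.

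Armed with the uniform bound, I would finish exactly as in Theorem \ref{lemma.fthetaysubexponential} but track the little-$\text{o}$. Apply Lemma \ref{lemma.yintermsofftheta} with parameter $\Delta'$: by \eqref{eq.intdelta}, $I(t)=\frac{1}{\Delta'}\int_0^{\Delta'}f_\theta(t)\,d\theta$ for $t\geq\Delta'$, whose integrand is bounded by $N'$ and tends to $0$ pointwise in $\theta$, so bounded convergence on $[0,\Delta']$ gives $I(t)=\text{o}(\gamma(t))$ (for $t\in[0,\Delta']$, $I$ is bounded by continuity, so this is $\text{o}(\gamma)$ trivially). Then in \eqref{eq.yrepave} the middle term is $\text{o}(\gamma)$; the two convolution terms are $\text{o}(\gamma)$ by the auxiliary fact that convolution with $e^{-\cdot}$ preserves the $\text{o}(\gamma)$ property, proved by the split $\int_0^t=\int_0^T+\int_T^t$ and using that $\gamma$ dominates every negative exponential on the first piece and \eqref{eq.subconvexp} (with $\alpha=1$) on the second, applied to $f_{\Delta'}=\text{o}(\gamma)$ (the case $\theta=\Delta'$ of $(A)$) and to $I=\text{o}(\gamma)$. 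Hence $y=\text{o}(\gamma)$, which is $(C)$. I expect the Baire-category upgrade to the uniform-in-$\theta$ bound to be the only step that is not a routine reworking of the proof already given for Theorem \ref{lemma.fthetaysubexponential}.
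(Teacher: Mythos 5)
Your proof is correct, and the $(C)\Rightarrow(B)$ direction and the tail of $(A)\Rightarrow(C)$ (dominated convergence on $I$, and the $\text{o}(\gamma)$--preservation lemma \eqref{eq.littleosubexpconv}, proved by splitting $\int_0^t=\int_0^T+\int_T^t$ and using \eqref{eq.subconvexp}) are essentially identical to the paper's. The genuine difference is in the ``uniformity upgrade'' at the heart of $(A)\Rightarrow(C)$. The paper defines closed sets $Q_m\subset[0,\Delta]$ (essentially $\{\theta:|f_\theta(t)|\le\gamma(t)\ \forall t\ge m\}$), observes that one of them has positive Lebesgue measure, and invokes the Steinhaus theorem in the form of \cite[Lemma 15.9.2]{GLS}: $Q_{m'}-Q_{m'}$ contains a neighbourhood of $0$, giving a uniform bound for $\theta$ in a short interval, which is then propagated to all of $[0,\Delta]$ by adding translates. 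You instead introduce closed sets $E_N=\{\theta\in[0,\Delta]:|f_\theta(t)|\le N\gamma(t)\ \forall t\ge 0\}$ covering $[0,\Delta]$ and apply the Baire category theorem to obtain an $E_N$ with nonempty interior $[a,b]$; the algebraic identity $f_\theta(t)=f_{a+\theta}(t)-f_a(t-\theta)$ then transfers the bound back to a genuine neighbourhood $[0,\Delta']$ of $0$, which is all that Lemma~\ref{lemma.yintermsofftheta} (applied with parameter $\Delta'$) needs. The two arguments achieve the same end: yours works directly with a topological tool (Baire) on closed sets, while the paper's works through measure theory (positive--measure set, Steinhaus difference set). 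The paper even notes its $Q_m$ are closed, so your Baire route is available there too; its principal advantage is that you never need to chain intervals to cover $[0,\Delta]$, because (A) of Theorem~\ref{lemma.fthetaysubexponential} only requires the uniform bound for some $\Delta$, so working with the smaller $\Delta'=b-a$ suffices. This sidesteps the somewhat glossed-over extension step in the paper (which is spelled out only later, in the remarks before Theorem~\ref{lemma.fthetaymonotonelittleo}). The paper's route has the merit of hewing closely to the Gripenberg--Londen--Staffans precedent it is generalising.
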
 
	\begin{proof}
		To show that (C) implies (B), let $\Delta>0$ be arbitrary. Note by hypothesis that for every $\epsilon>0$ that $|y(t)|\leq \epsilon \gamma(t)$ for all $t\geq T_1(\epsilon)$, which we can take greater than unity, without loss of generality. Thus, for all $t\geq T_1(\epsilon)+\Delta$, and $\theta\in [0,\Delta]$, we have 
		\[
		\left|\int_{t-\theta}^t f(s)\,ds\right| \leq \epsilon \gamma(t)+\epsilon \gamma(t-\theta) +\epsilon \int_{u=0}^{\theta} \gamma(t-u)\,du
		\] 
		Next, by the uniform convergence property of $\gamma$, there exists a constant $T>\Delta$ such that $\gamma(t-u)\leq 2\gamma(t)$ for all $t\geq T$ and all $u\in [0,\Delta]$. Take $T_2(\epsilon)=\max(T_1+\Delta,T)$ and thus for $t\geq T_2(\epsilon)$ we have 
		\[
		\left|\int_{t-\theta}^t f(s)\,ds\right| \leq \epsilon \gamma(t)+2\epsilon \gamma(t) +\epsilon \int_{u=0}^{\Delta} 2\gamma(t)\,du=(3+2\Delta)\epsilon \gamma(t),
		\]
		and since $\epsilon>0$ is arbitrary, we have (B). 
		
		Clearly, (B) implies (A), so to complete the chain of equivalences, it remains to show that (A) implies (C). In this part of the proof, $\Delta>0$ is fixed. 
		To do this, by virtue of the subexponential property of $\gamma$, we note that we have $\int_t^{t+\theta} f(s)\,ds = o(\gamma(t))$ as $t\to\infty$ for all $\theta\in [0,\Delta]$. Hence we may define 
		\[
		Q_m=\left\{\theta\in [0,\Delta]: \left|\int_{t}^{t+\theta} f(s)\,ds \right|\leq \gamma(t)\right\}.
		\]  
		By continuity of the functions, $Q_m$ is closed and measurable, with $Q_{m+1}\subseteq Q_m$. Also $\cup_{m=1}^\infty Q_m=[0,\Delta]$. Therefore, we have that there exists an $m=m'$ such that $\text{Leb}(Q_m)>0$. Fix this $m$. Using the notation  
		\[
		Q_m-Q_m:=\{\theta-\theta':\theta,\theta'\in Q_m\}
		\]
		by Lemma 15.9.2 in \cite{GLS}, $Q_m-Q_m$ contains an interval $(-\delta,\delta)$ for some $\delta>0$. Let $\theta,\theta'\in [0,1]$ be in $Q_m$, and let $\theta>\theta'$ without loss. Then for $t\geq m$, we have 
		\[
		\int_{t}^{t+\theta-\theta'} f(s)\,ds = \int_t^{t+\theta} f(s)\,ds - \int_{t+\theta-\theta'}^{t+\theta} f(s)\,ds.
		\] 
		Hence
		\[
		\left|	\int_{t}^{t+\theta-\theta'} f(s)\,ds\right|\leq \gamma(t)+\gamma(t+\theta-\theta').
		\]
		Now, for all $\vartheta\in [0,\Delta]$, there is a $T>0$ such that $\gamma(t+\vartheta)\leq 2\gamma(t)$. Hence, with $T'=\max(T,m)$, we have
		\[
		\left|	\int_{t}^{t+\theta-\theta'} f(s)\,ds\right|\leq 3\gamma(t), \quad t\geq T'.
		\]
		Hence
		\[
		\left|	\int_{t}^{t+\vartheta} f(s)\,ds\right|\leq 3\gamma(t), \quad \vartheta\in (0,\delta), \quad t\geq T.
		\]
		From this, and the uniform convergence property of $\gamma$, we get the estimate 
		\[
		|f_\theta(t)|/\gamma(t) \leq K, \quad t\geq T^\ast, \quad \theta\in [0,\Delta],
		\]
		for some $K>0$ which is $t$-- and $\theta$--independent. 
		
		Now, for $t\geq \Delta$, we have $I(t)/\gamma(t)=\Delta^{-1}\int_0^\Delta f_\theta(t)/\gamma(t)\,d\theta$. Since $f_\theta(t)/\gamma(t)\to 0$ as $t\to\infty$ for each $\theta\in [0,\Delta]$, by dominated convergence, it follows that $I(t)/\gamma(t)\to 0$ as $t\to\infty$. Since $f_\Delta(t)/\gamma(t)\to 0$ as $t\to\infty$ by hypothesis, it follows from \eqref{eq.yrepave} that $y(t)=\text{o}(\gamma(t))$ as $t\to\infty$ as required in (B), provided 
		\begin{equation} \label{eq.littleosubexpconv}
			g(t)=\text{o}(\gamma(t)), \quad  t\to\infty\quad \text{ implies }
			\int_0^t e^{-(t-s)}\gamma(s)\,ds = \text{o}(\gamma(t)), \quad t\to\infty,
		\end{equation}
		where we take $g$ to be $f_\Delta$ or $I$. %Take $\Psi(t)=\int_t^{t+1} \psi(s)\,ds$ so that $\Psi'(t)/\Psi(t)\to 0$ as $t\to\infty$. 
		To prove \eqref{eq.littleosubexpconv}, by hypothesis for every $\epsilon>0$ there is a $T(\epsilon)>0$ such that $|g(t)|\leq \epsilon\gamma(t)$ for $t\geq T(\epsilon)$. Then for $t\geq T(\epsilon)$, we have 
		\[
		\frac{1}{\gamma(t)e^t}\left|
		\int_0^t e^s g(s)\,ds\right| \leq \frac{1}{\gamma(t)e^t}\left|\int_0^T e^s g(s)\,ds\right|+\epsilon \frac{1}{\gamma(t)e^t}\int_T^t e^s\gamma(s)\,ds 
		\]
		Since $\gamma$ is subexponential, $\gamma(t)e^t \to \infty$ as $t\to\infty$, and using \eqref{eq.subconvexp} with 	$\alpha=1$ shows that the second term has unit limit as $t\to\infty$. Therefore  
		\[
		\limsup_{t\to\infty} \frac{1}{\gamma(t)e^t}\left|
		\int_0^t e^s g(s)\,ds\right| \leq \epsilon,
		\]
		and since $\epsilon>0$ is arbitrary, we have established  \eqref{eq.littleosubexpconv} as required. 
	\end{proof}
	
	We deliver on the promise made before the proof of this result, namely, that taken together, the last two lemmas can be used to characterise the (subexponential) size of solutions of \eqref{eq.y} exactly to leading order. Roughly, in order for $y$ to be $\text{O}(\gamma)$ but not $o(\gamma)$ it is necessary and sufficient for $f_\theta$ to be $\text{O}(\gamma)$ for all $\theta$, but not to be $o(\gamma)$ for some $\theta\in [0,\Delta]$. 
	
	\begin{theorem} \label{lemma.fthetaysubexponentialexact}
		Let $f\in C([0,\infty);\mathbb{R})$, and $y$ be the unique continuous solution to \eqref{eq.y}. 
		Let $\gamma$ be (finite lag) subexponential. Then the following are equivalent:
		\begin{enumerate}
			\item[(A)] There exists a $\Delta>0$ such that there is a $\theta'\in (0,\Delta]$ and a $C>0$ such that 
			\[
			\limsup_{t\to\infty} \frac{|\int_{t-\theta'}^t f(s)\,ds|}{\gamma(t)}>0,
			\]
			and $\left|\int_{(t-\theta)^+}^t f(s)\,ds \right|\leq K \gamma(t)$ for all $\theta\in [0,\Delta]$, $t\geq 0$;
			\item[(B)] For every $\Delta>0$ there is $\theta'=\theta'(\Delta)\in [0,\Delta]$ and a $K=K(\Delta)>0$ such that 
			\[
			\limsup_{t\to\infty} \frac{|\int_{t-\theta'}^t f(s)\,ds|}{\gamma(t)}>0,
			\]
			and $\left|\int_{(t-\theta)^+}^t f(s)\,ds \right|\leq K \gamma(t)$ for all $\theta\in [0,\Delta]$, $t\geq 0$;
			\item[(C)] 
			\[
			\limsup_{t\to\infty} \frac{|y(t)|}{\gamma(t)} \in (0,\infty).
			\]
		\end{enumerate}
	\end{theorem}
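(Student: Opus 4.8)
The plan is to read the condition $\limsup_{t\to\infty}|y(t)|/\gamma(t)\in(0,\infty)$ in (C) as the conjunction of two statements, \emph{finiteness} and \emph{positivity} of the limit superior, and to match each with one of the two preceding theorems. Precisely, $\limsup|y|/\gamma\in(0,\infty)$ holds if and only if $y=\text{O}(\gamma)$ \emph{and} $y\neq\text{o}(\gamma)$. By Theorem~\ref{lemma.fthetaysubexponential}, the first conjunct $y=\text{O}(\gamma)$ is equivalent to the existence of some $\Delta>0$ with $|\int_{(t-\theta)^+}^t f(s)\,ds|\le K\gamma(t)$ for all $\theta\in[0,\Delta]$, $t\ge0$, and also to the same bound holding for \emph{every} $\Delta>0$ (with $K=K(\Delta)$); this is exactly the upper-bound clause occurring in both (A) and (B), where we read the unquantified $C$ and $K$ in (A) as the same constant. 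By Theorem~\ref{lemma.fthetaysubexponentiallittleo}, $y=\text{o}(\gamma)$ is equivalent to $f_\theta(t)/\gamma(t)\to0$ for all $\theta$ in some interval $[0,\Delta]$, equivalently for every such interval. Negating: $y\neq\text{o}(\gamma)$ is equivalent to ``there exists $\theta'>0$ with $f_{\theta'}(t)/\gamma(t)\not\to0$'', and equivalently to ``for every $\Delta>0$ there is $\theta'\in(0,\Delta]$ with $f_{\theta'}(t)/\gamma(t)\not\to0$''. Here I use that $f_0\equiv0$, so any witness $\theta'$ is automatically positive, and that $f_{\theta'}/\gamma\not\to0$ is the same as $\limsup_{t\to\infty}|f_{\theta'}(t)|/\gamma(t)>0$ (this limsup always lies in $[0,\infty]$ and vanishes exactly when the ratio tends to zero); these recover the lower-bound clauses of (A) and (B) respectively.

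With these identifications, the proof is bookkeeping. By the observations above, statement (A) says precisely ``$y=\text{O}(\gamma)$ and $y\neq\text{o}(\gamma)$'': once the upper bound holds for one value $\Delta_0$, Theorem~\ref{lemma.fthetaysubexponential} lets us enlarge $\Delta_0$ to exceed the $\theta'$ produced by the lower-bound clause, so no incompatibility between the two clauses of (A) arises. Hence (A) $\iff$ $\limsup|y|/\gamma\in(0,\infty)$, i.e.\ (C). Similarly (B) is the ``for every $\Delta$'' form of ``$y=\text{O}(\gamma)$ and $y\neq\text{o}(\gamma)$'', which by the two theorems is again equivalent to (C). One may instead run the cycle (C) $\Rightarrow$ (B) $\Rightarrow$ (A) $\Rightarrow$ (C) directly: for (C) $\Rightarrow$ (B), $y\neq\text{o}(\gamma)$ negates the ``exists $\Delta$'' little-o characterisation, which is exactly the ``for every $\Delta$'' lower-bound clause, and $y=\text{O}(\gamma)$ gives the upper-bound clause for every $\Delta$ by Theorem~\ref{lemma.fthetaysubexponential}; (B) $\Rightarrow$ (A) is immediate on fixing one $\Delta$; for (A) $\Rightarrow$ (C), a single $\theta'$ with $\limsup|f_{\theta'}|/\gamma>0$ contradicts the ``for every $\Delta$'' little-o characterisation applied with $\Delta=\theta'$, so $y\neq\text{o}(\gamma)$, while the upper-bound clause yields $y=\text{O}(\gamma)$.

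No new estimates are needed beyond those already packaged inside Theorems~\ref{lemma.fthetaysubexponential} and~\ref{lemma.fthetaysubexponentiallittleo}: the representation \eqref{eq.yrepave}, the identity \eqref{eq.aveRepy}, the subexponential convolution asymptotic \eqref{eq.subconvexp}, and the covering lemma (Lemma 15.9.2 of \cite{GLS}) have all done their work there. The only genuine subtlety --- and the step I expect to demand the most attention --- is the quantifier bookkeeping: aligning the ``there exists $\Delta$'' form (A) and the ``for every $\Delta$'' form (B) with the correct forms of the two cited equivalences, and noting that the degenerate choice $\theta'=0$ (which would make the $\limsup$ clause vacuously false) cannot occur, so that ``$f_{\theta'}/\gamma\not\to0$'' and ``$\limsup|f_{\theta'}|/\gamma>0$'' may be used interchangeably throughout.
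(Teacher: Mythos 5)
Your proposal is correct and takes essentially the same approach as the paper: both proofs reduce the equivalence to the two preceding theorems (Theorem~\ref{lemma.fthetaysubexponential} for the upper bound clause and Theorem~\ref{lemma.fthetaysubexponentiallittleo} for the little-o/positive-limsup clause), with the paper phrasing the second conjunct as a contradiction argument and you phrasing it as a direct negation; the quantifier bookkeeping and the remark that $\theta'=0$ is automatically excluded are both sound.
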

	\begin{proof}
		Assume (C); we prove (B). Then $|y(t)|\leq L\gamma(t)$ for all $t\geq 0$, so by Theorem~\ref{lemma.fthetaysubexponential}, we have the second part of (B). Suppose, by way of contradiction, that the first part of (B) is false, so that 
		\begin{equation} \label{eq.contra1}
			\limsup_{t\to\infty} \frac{|\int_{t-\theta'}^t f(s)\,ds|}{\gamma(t)}=0, \text{ for all $\theta'\in [0,\Delta]$.}
		\end{equation}
		But then, by Theorem~\ref{lemma.fthetaysubexponentiallittleo}, we must have that $y(t)=\text{o}(\gamma(t))$ as $t\to\infty$, contradicting the hypothesis (B). Therefore, the first part of (B) must also be true, and we have that (C) implies (B), and clearly (B) implies (A).
		
		To complete the proof, we must show that (A) implies (C). Let $\Delta>0$ be the lag for which (A) holds. By the second part of (A), we have $y(t)=\text{O}(\gamma(t))$ as $t\to\infty$, by Theorem~\ref{lemma.fthetaysubexponential}, so the limsup in (C) is finite. Suppose, by way of contradiction, that the limsup in (C) is zero, so that $y(t)=\text{o}(\gamma(t))$ as $t\to\infty$. But then, by the equivalence of statements (B) and (C) in  Theorem~\ref{lemma.fthetaysubexponentiallittleo}, we have that \eqref{eq.contra1} holds for our choice of $\Delta$. But this contradicts the first part of (A), which is assumed true by hypothesis. Hence, the supposition that the limsup in (C) is zero is false, and therefore the limsup in (A) is finite and positive, proving (A). This secures the desired equivalence. 
	\end{proof}
	
	It is reasonable to ask if in the case that $\gamma$ is monotone whether a result of the above type can be shown. The answer is positive. The first part of  Theorem~\ref{lemma.fthetaysubexponentiallittleo} goes through more or less verbatim. With cosmetic changes to the beginning of second part of Theorem~\ref{lemma.fthetaysubexponentiallittleo}, it can be shown that when 
	\[
	\int_{(t-\theta)^+}^t f(s)\,ds = \text{o}(\gamma(t)), \quad t\to\infty,
	\]
	then $|f_\theta(t)|\leq C\gamma(t)$ for all $t\geq T$ and all $\theta\in [0,\delta]$ for some $\delta\in (0,\Delta)$, where $C$ is $\theta$-- and $t$--independent. We show how this bound can be extended to all $\theta\in [0,\Delta]$, since the proof was glossed over above. To start, note we have a minimal $N^\ast\geq 1$ such that $(N^\ast+1) \delta>\Delta$. Also there is an $N(\theta)\leq N^\ast$ such that $N\delta\leq \theta$ and $(N+1)\delta>\theta$. Write 
	\[
	\int_{t-\theta}^t f(s)\,ds = \sum_{j=1}^N \int_{t-\theta+(j-1)\delta}^{t-\theta+j\delta} f(s)\,ds + \int_{t-(\theta-N\delta)}^t f(s)\,ds.
	\]
	Taking the triangle inequality, each term in the sum is bounded by $C\gamma(t-\theta+j\delta)$, which in turn is bounded by $C\gamma(t)$. Likewise, the last term is bounded by $C\gamma(t)$, since the interval is of length less than $\delta$. Thus
	for any $\theta\in [0,\Delta]$ and $t\geq T+2\Delta$, we have 
	\[
	\left|\int_{t-\theta}^t f(s)\,ds\right|\leq (N+1)C\gamma(t)\leq (N^\ast+1)C\gamma(t)
	\leq \left(\Delta/\delta+1\right)C\gamma(t)=:K\gamma(t).
	\]
	Therefore, by dominated convergence, we have as before that $I(t)/\gamma(t)\to 0$ as $t\to\infty$, and $f_\Delta(t)=\text{o}(\gamma(t))$ by hypothesis. Hence for $t\geq T(\epsilon)$ we have $|I(t)|\leq \epsilon \gamma(t)$ and $|f_\Delta(t)|/\Delta\leq \epsilon \gamma(t)$. Set $J(T)= \int_0^T e^{s}(|f_\Delta(s)|/\Delta+|I(s)|)\,ds$. Hence by \eqref{eq.yrepave} for $t\geq T$ we have 
	\begin{align*}
		|y(t)|&\leq |\xi|e^{-t} +\int_0^T e^{-(t-s)}\frac{1}{\Delta}|f_1(s)|\,ds + \int_T^t  e^{-(t-s)}\frac{1}{\Delta}|f_1(s)|\,ds + |I(t)| \\ &\qquad+\int_0^T e^{-(t-s)} |I(s)|\,ds + \int_T^t e^{-(t-s)} |I(s)|\,ds\\
		& \leq e^{-t} (|\xi|+J(T)) 
		+ 2\epsilon \int_T^t  e^{-(t-s)}\gamma(s)\,ds + \epsilon \gamma(t)\\
		&\leq  e^{-t} (|\xi|+J(T))
		+ 2\epsilon \int_0^{t-T}  e^{-u}\,du \cdot \gamma(t) + \epsilon \gamma(t)\\
		&\leq 3\epsilon \gamma(t)+ e^{-t}(|\xi|+J(T)). 
	\end{align*}
	Hence $\limsup_{t\to\infty} |y(t)|/\gamma(t)\leq 3\epsilon$, so as $\epsilon>0$ is arbitrary, we have $y(t)=o(\gamma(t))$ as $t\to\infty$, as claimed.  
	
	We have hence sketched the important details of the following result.
	\begin{theorem} \label{lemma.fthetaymonotonelittleo}
		Let $f\in C([0,\infty);\mathbb{R})$, and $y$ be the unique continuous solution to \eqref{eq.y}. 
		Let $\gamma$ be positive, non--decreasing and in $C(0,\infty)$. Then the following are equivalent:
		\begin{enumerate}
			\item[(A)] There exists $\Delta>0$ such that  $\int_{t-\theta}^t f(s)\,ds=\text{o}(\gamma(t))$ as $t\to\infty$ for all $\theta\in [0,\Delta]$;
			\item[(B)]  For every $\Delta>0$, $\int_{t-\theta}^t f(s)\,ds=\text{o}(\gamma(t))$ as $t\to\infty$ for all $\theta\in [0,\Delta]$;
			\item[(C)] $y(t)=\text{o}(\gamma(t))$, as $t\to\infty$;	
		\end{enumerate}
	\end{theorem}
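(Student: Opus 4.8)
The plan is to run the argument in exact parallel with the proof of Theorem~\ref{lemma.fthetaysubexponentiallittleo}, replacing every appeal to the subexponential property of $\gamma$ by an appeal to its monotonicity; since a non--decreasing $\gamma$ furnishes the crude inequalities $\gamma(t-\theta)\le\gamma(t)$ and $\gamma(s)\le\gamma(t)$ for $0\le s\le t$, most of the $\gamma$--dependent estimates in fact become simpler. First I would prove \textbf{(C)}$\Rightarrow$\textbf{(B)}: fix $\Delta>0$, use the hypothesis $|y(t)|\le\epsilon\gamma(t)$ for $t\ge T(\epsilon)$, and apply the triangle inequality to the identity \eqref{eq.aveRepy}; for $t\ge T(\epsilon)+\Delta$ and $\theta\in[0,\Delta]$ this bounds $\left|\int_{t-\theta}^t f(s)\,ds\right|$ by $(2+\Delta)\epsilon\gamma(t)$, and since $\epsilon>0$ is arbitrary \textbf{(B)} follows (only large $t$ is relevant, as \textbf{(B)} is purely asymptotic). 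The implication \textbf{(B)}$\Rightarrow$\textbf{(A)} is trivial.

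The substantive step is \textbf{(A)}$\Rightarrow$\textbf{(C)}; fix the $\Delta>0$ supplied by \textbf{(A)}. The key preliminary is to upgrade the pointwise smallness ``$f_\theta(t)=\text{o}(\gamma(t))$ for each $\theta\in[0,\Delta]$'' to a \emph{uniform} bound ``$|f_\theta(t)|\le K\gamma(t)$ for all $\theta\in[0,\Delta]$ and all large $t$''. For integers $m\ge\Delta$ I would set $Q_m=\{\theta\in[0,\Delta]:\ |\int_{t-\theta}^t f(s)\,ds|\le\gamma(t)\ \text{for all }t\ge m\}$ (using delayed rather than advanced intervals, which is the only ``cosmetic change'' needed relative to Theorem~\ref{lemma.fthetaysubexponentiallittleo}, because for a general non--decreasing $\gamma$ the ratio $\gamma(t+\Delta)/\gamma(t)$ need not be bounded). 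Each $Q_m$ is closed, the family is nested and exhausts $[0,\Delta]$, so some $Q_m$ has positive Lebesgue measure, whence by Lemma~15.9.2 of \cite{GLS} the difference set $Q_m-Q_m$ contains an interval $(-\delta,\delta)$. Writing $\vartheta\in(0,\delta)$ as $\theta-\theta'$ with $\theta,\theta'\in Q_m$ and splitting $\int_{t-\vartheta}^t f = \int_{t-\theta}^t f - f_{\theta'}(t-\vartheta)$, together with $\gamma(t-\vartheta)\le\gamma(t)$, gives $|f_\vartheta(t)|\le 2\gamma(t)$ for $\vartheta\in[0,\delta)$ and all large $t$; then, fixing $\delta'\in(0,\delta)$ and subdividing $[t-\theta,t]$ into at most $\lceil\Delta/\delta'\rceil$ consecutive pieces each of length $<\delta$, and bounding each piece by monotonicity, extends this to $|f_\theta(t)|\le K\gamma(t)$ for all $\theta\in[0,\Delta]$ and all large $t$, with $K$ depending only on $\Delta,\delta,\delta'$.

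Armed with the uniform bound, I would finish as follows. By \eqref{eq.intdelta}, $I(t)=\Delta^{-1}\int_0^\Delta f_\theta(t)\,d\theta$ for $t\ge\Delta$; the integrands $f_\theta(t)/\gamma(t)$ tend to $0$ pointwise in $\theta$ and are dominated by $K$, so dominated convergence gives $I(t)=\text{o}(\gamma(t))$. Feeding $I$ and $f_\Delta$ (which is $\text{o}(\gamma)$ by hypothesis) into the representation \eqref{eq.yrepave}, it remains only to verify the monotone analogue of \eqref{eq.littleosubexpconv}: if $g=\text{o}(\gamma)$ then $\int_0^t e^{-(t-s)}g(s)\,ds=\text{o}(\gamma(t))$. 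This I would get by splitting the integral at $T(\epsilon)$, where $|g|\le\epsilon\gamma$: the tail is at most $\epsilon\gamma(t)\int_T^t e^{-(t-s)}\,ds\le\epsilon\gamma(t)$ by monotonicity, while the head is at most $e^{-t}\int_0^T e^s|g(s)|\,ds$, which is $\text{o}(\gamma(t))$ because $\gamma$ is non--decreasing and positive, hence bounded below by a positive constant near infinity. Combining the three terms of \eqref{eq.yrepave} yields $y(t)=\text{o}(\gamma(t))$, i.e.\ \textbf{(C)}.

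The main obstacle is, exactly as in the subexponential case, the uniform--bound step: passing from ``$f_\theta=\text{o}(\gamma)$ for each $\theta$'' to ``$|f_\theta|\le K\gamma$ uniformly on $[0,\Delta]$''. Everything else is bookkeeping with the identities \eqref{eq.aveRepy}, \eqref{eq.intdelta}, \eqref{eq.yrepave} and an elementary convolution estimate; the one genuinely non--elementary ingredient is the Gripenberg--Londen--Staffans fact that a measurable set of positive measure has a difference set containing a neighbourhood of the origin, after which monotonicity of $\gamma$ does the routine work of converting the resulting local interval bound into a global bound on $[0,\Delta]$.
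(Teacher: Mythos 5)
Your proposal is correct and follows essentially the same route as the paper: reuse the subexponential proof, switch the sets $Q_m$ from advanced to delayed intervals so that monotonicity replaces the uniform-convergence property of $\gamma$, invoke the Steinhaus-type difference-set lemma to get a local uniform bound, extend it to $[0,\Delta]$ by subdividing and using $\gamma(t-\theta)\le\gamma(t)$, then apply dominated convergence to $I$ and a split-at-$T(\epsilon)$ convolution estimate. The paper presents exactly this argument as a commented sketch preceding the theorem statement, only writing out in full the subdivision step and the final estimate; you have filled in the ``cosmetic changes'' it leaves implicit in the same way the paper intends.
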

	We remark, that with $\gamma(t)=1$ we have proven from scratch the result of Grippenberg, Londen, and Staffans that $y(t)\to 0$ as $t\to\infty$ if and only if  $\int_{t-\theta}^t f(s)\,ds\to 0$ as $t\to\infty$ for all $\theta>0$ (albeit by reusing their strategy more or less wholesale). 
	
	From this result, an analogue to Theorem~\ref{lemma.fthetaysubexponentialexact} can be proven, with Theorem~\ref{lemma.fthetaymonotone} also in support. The proof is essentially identical to that of Theorem~\ref{lemma.fthetaysubexponentialexact} so is omitted.
	
	\begin{theorem} \label{lemma.fthetaymonotoneexact}
		Let $f\in C([0,\infty);\mathbb{R})$, and $y$ be the unique continuous solution to \eqref{eq.y}. 
		Let $\gamma$ be positive, non--decreasing and in $C(0,\infty)$. Then the following are equivalent:
		\item[(A)] There exists a $\Delta>0$ such that there is a $\theta'\in (0,\Delta]$ and a $C>0$ such that 
		\[
		\limsup_{t\to\infty} \frac{|\int_{t-\theta'}^t f(s)\,ds|}{\gamma(t)}>0,
		\]
		and $\left|\int_{(t-\theta)^+}^t f(s)\,ds \right|\leq K \gamma(t)$ for all $\theta\in [0,\Delta]$, $t\geq 0$;
		\item[(B)] For every $\Delta>0$ there is $\theta'=\theta'(\Delta)\in [0,\Delta]$ and a $K=K(\Delta)>0$ such that 
		\[
		\limsup_{t\to\infty} \frac{|\int_{t-\theta'}^t f(s)\,ds|}{\gamma(t)}>0,
		\]
		and $\left|\int_{(t-\theta)^+}^t f(s)\,ds \right|\leq K \gamma(t)$ for all $\theta\in [0,\Delta]$, $t\geq 0$;
		\item[(C)] 
		\[
		\limsup_{t\to\infty} \frac{|y(t)|}{\gamma(t)} \in (0,\infty).
		\]
	\end{theorem}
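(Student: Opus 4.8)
The plan is to run exactly the logical skeleton used for Theorem~\ref{lemma.fthetaysubexponentialexact}, but with the monotone-weight versions of the two auxiliary characterisations substituted in: Theorem~\ref{lemma.fthetaymonotone} in place of Theorem~\ref{lemma.fthetaysubexponential} for the $O(\gamma)$ statements, and Theorem~\ref{lemma.fthetaymonotonelittleo} in place of Theorem~\ref{lemma.fthetaysubexponentiallittleo} for the $o(\gamma)$ statements. Since the first clauses of (A) and (B) and the whole of (C) have the same form here as in the subexponential theorem, and only the hypothesis on $\gamma$ changes, the combining argument is essentially a ``black box'' assembly of these two lemmas and needs no new estimates at this stage.

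First I would prove (C) $\Rightarrow$ (B). Fix an arbitrary $\Delta>0$. From (C) we have $|y(t)|\le L\gamma(t)$ for all $t\ge 0$ and some $L>0$, so by the equivalence (C) $\Leftrightarrow$ (B) of Theorem~\ref{lemma.fthetaymonotone} there is $K=K(\Delta)>0$ with $|\int_{(t-\theta)^+}^t f(s)\,ds|\le K\gamma(t)$ for all $\theta\in[0,\Delta]$ and $t\ge 0$; this is the second clause of (B). For the first clause, argue by contradiction: if $\limsup_{t\to\infty}|\int_{t-\theta'}^t f(s)\,ds|/\gamma(t)=0$ for every $\theta'\in[0,\Delta]$, then statement (A) of Theorem~\ref{lemma.fthetaymonotonelittleo} holds with this $\Delta$, and its implication (A) $\Rightarrow$ (C) forces $y(t)=\text{o}(\gamma(t))$, contradicting $\limsup|y(t)|/\gamma(t)>0$. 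Hence some $\theta'=\theta'(\Delta)\in(0,\Delta]$ must witness a strictly positive limsup, giving (B). The implication (B) $\Rightarrow$ (A) is immediate by specialising $\Delta$ to a single admissible value.

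It remains to prove (A) $\Rightarrow$ (C). Let $\Delta>0$ be a lag for which (A) holds. The second clause of (A) is precisely statement (A) of Theorem~\ref{lemma.fthetaymonotone}, so its implication (A) $\Rightarrow$ (C) gives $y(t)=O(\gamma(t))$ and hence $\Lambda:=\limsup_{t\to\infty}|y(t)|/\gamma(t)<\infty$. If $\Lambda=0$, then $y(t)=\text{o}(\gamma(t))$, and the equivalence (C) $\Leftrightarrow$ (A) of Theorem~\ref{lemma.fthetaymonotonelittleo} would yield $\int_{t-\theta}^t f(s)\,ds=\text{o}(\gamma(t))$ for all $\theta\in[0,\Delta]$, in particular $\limsup_{t\to\infty}|\int_{t-\theta'}^t f(s)\,ds|/\gamma(t)=0$ for the $\theta'$ of the first clause of (A) --- a contradiction. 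Therefore $\Lambda\in(0,\infty)$, which is (C).

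As for obstacles, there are essentially none at the level of this theorem: all the genuine work --- promoting the $o(\gamma)$ bound on $f_\theta$ from a short lag interval to arbitrary $[0,\Delta]$, and handling the convolution $\int_0^t e^{-(t-s)}\gamma(s)\,ds$ when $\gamma$ is merely non-decreasing rather than subexponential --- has already been carried out in the sketch preceding Theorem~\ref{lemma.fthetaymonotonelittleo} and inside Theorem~\ref{lemma.fthetaymonotone}. The only point requiring a moment's care is that it is the monotonicity of $\gamma$ (rather than the uniform convergence $\gamma(t-\theta)/\gamma(t)\to 1$) that is invoked within those two cited results; since here we use them only as black boxes, the assembly goes through verbatim, which is exactly why the proof can be said to be essentially identical to that of Theorem~\ref{lemma.fthetaysubexponentialexact}.
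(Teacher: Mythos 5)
Your proof is correct and is precisely the argument the paper intends: the paper omits the proof with the remark that it is ``essentially identical'' to that of Theorem~\ref{lemma.fthetaysubexponentialexact}, and you have faithfully executed that substitution, swapping Theorems~\ref{lemma.fthetaymonotone} and \ref{lemma.fthetaymonotonelittleo} for their subexponential counterparts and running the same two contradiction arguments.
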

	%	We pause for a moment to consider how the result might be used in an application. The output $y$ is observed; we suppose that it diverges, so that $\limsup_{t\to\infty} |y(s)|=+\infty$. Note that $\gamma(t)=1+\max_{0\leq s\leq t} |y(s)|$ is positive, continuous and increasing. Moreover
	%	\[
	%	\limsup_{t\to\infty} \frac{|y(t)|}{\gamma(t)}=1,
	%	\] 
	%	so (C) is satisfied. By the Theorem, (A) must hold. Therefore for all $\theta\in [0,\Delta]$ we have 
	%	\[
	%	\left|
	%	\int_{t-\theta}^t f(s)\,ds\right| \leq K(\Delta)(1+\max_{0\leq s\leq t} |y(s)|), \quad t\geq \theta.
	%	\]
	%	Moreover, by Theorem~\ref{lemma.fthetaymonotoneexactbettercondition}, we see that for a.a. $\theta\in [0,\Delta]$ 
	%	\[
	%	\limsup_{t\to\infty} \frac{\left| \int_{t-\theta}^t f(s)\,ds \right|}{1+ \max_{0\leq s\leq t} |y(s)|}=:L(\theta)>0.
	%	\] 
	%	Combining these limits, and the fact that $y^\ast(t)=\max_{0\leq s\leq t} |y(s)|$ obeys $y^\ast(t)\to\infty$ as $t\to\infty$ we have for a.a. $\theta\in [0,\Delta]$ that 
	%	\[
	%		\limsup_{t\to\infty} \frac{\left| \int_{t-\theta}^t f(s)\,ds \right|}{\max_{0\leq s\leq t} |y(s)|}\in (0,\infty). 
	%	\]
	
	An important corollary to Theorem~\ref{lemma.fthetaymonotoneexact} holds in the case when $\gamma(t)=1$, since it classifies exactly the situations in which $y$ is bounded, but does not tend to zero.
	\begin{theorem} \label{lemma.fthetaymonotoneexactbddnotzero}
		Let $f\in C([0,\infty);\mathbb{R})$, and $y$ be the unique continuous solution to \eqref{eq.y}. Then the following are equivalent:
		\item[(A)] There exists $\theta'>0$ such that 
		\[
		\limsup_{t\to\infty} \left|\int_{t-\theta'}^t f(s)\,ds\right|>0
		\]
		while $\left|\int_{(t-\theta)^+}^t f(s)\,ds \right|\leq K$ for all $\theta\in [0,1]$, $t\geq 0$;
		\item[(B)] 
		\[
		\limsup_{t\to\infty} |y(t)| \in (0,\infty).
		\]
	\end{theorem}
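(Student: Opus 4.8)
The plan is to read this off directly from Theorem~\ref{lemma.fthetaymonotoneexact} applied with the constant weight $\gamma\equiv 1$, which is plainly positive, non--decreasing and continuous on $(0,\infty)$, so that theorem's hypotheses are met. With this choice, statement (C) of Theorem~\ref{lemma.fthetaymonotoneexact} reads $\limsup_{t\to\infty}|y(t)|\in(0,\infty)$, which is precisely statement (B) here. So the only thing to do is to match statement (A) here with statement (A) (equivalently (B)) of Theorem~\ref{lemma.fthetaymonotoneexact} in the case $\gamma\equiv 1$.

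The one point needing care is that the present corollary pins the lag window at $[0,1]$, whereas in Theorem~\ref{lemma.fthetaymonotoneexact} the length $\Delta$ is either an unspecified constant for which the hypothesis holds, or is quantified over all positive values. This is reconciled by Theorem~\ref{lemma.fthetaymonotonebdd}: the bound $|\int_{(t-\theta)^+}^t f(s)\,ds|\le K$ for all $\theta\in[0,1]$, $t\ge 0$ is exactly part (A) of Theorem~\ref{lemma.fthetaymonotonebdd}, hence equivalent to the assertion that for \emph{every} $\Delta>0$ there is a $K(\Delta)$ with $|\int_{(t-\theta)^+}^t f(s)\,ds|\le K(\Delta)$ for all $\theta\in[0,\Delta]$, $t\ge 0$. (Alternatively one splits $[t-\theta,t]$ into at most $\lceil\theta\rceil$ subintervals of length at most one and bounds each piece by $K$, giving the bound on $[0,\Delta]$ directly with constant $(\lceil\Delta\rceil)K$.)

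To prove (A) here implies (A) of Theorem~\ref{lemma.fthetaymonotoneexact} with $\gamma\equiv 1$: given $\theta'>0$ with $\limsup_{t\to\infty}|\int_{t-\theta'}^t f(s)\,ds|>0$, take $\Delta=\max(1,\theta')$; then $\theta'\in(0,\Delta]$ and, by the previous paragraph, the uniform bound holds on $[0,\Delta]$, so Theorem~\ref{lemma.fthetaymonotoneexact}(A) holds. Conversely, statement (B) of Theorem~\ref{lemma.fthetaymonotoneexact} with $\gamma\equiv1$, specialised to $\Delta=1$, yields a $\theta'\in(0,1]$ (necessarily nonzero, since an integral over an interval of zero length vanishes identically and so cannot have positive limit superior) with $\limsup_{t\to\infty}|\int_{t-\theta'}^t f(s)\,ds|>0$ and a uniform bound on $[0,1]$, which is exactly statement (A) here. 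Chaining these gives: (A) here $\Rightarrow$ Theorem~\ref{lemma.fthetaymonotoneexact}(A) $\Leftrightarrow$ Theorem~\ref{lemma.fthetaymonotoneexact}(C) $=$ (B) here $=$ Theorem~\ref{lemma.fthetaymonotoneexact}(C) $\Leftrightarrow$ Theorem~\ref{lemma.fthetaymonotoneexact}(B) $\Rightarrow$ (A) here, closing the equivalence.

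I do not expect a genuine obstacle here: all the analytic content sits in Theorems~\ref{lemma.fthetaymonotone}, \ref{lemma.fthetaymonotonebdd} and \ref{lemma.fthetaymonotoneexact}, and the sole subtlety is the bookkeeping that converts the fixed window $[0,1]$ into, and back out of, the free parameter $\Delta$ --- which is exactly what the ``some $\Delta$ $\Leftrightarrow$ all $\Delta$'' equivalence in Theorem~\ref{lemma.fthetaymonotonebdd} exists to supply.
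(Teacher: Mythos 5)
Your proposal is correct and follows exactly the route the paper intends: Theorem~\ref{lemma.fthetaymonotoneexactbddnotzero} is presented in the paper as an immediate corollary of Theorem~\ref{lemma.fthetaymonotoneexact} with $\gamma\equiv 1$ (no separate proof is given), and your argument simply supplies the missing bookkeeping, including the correct use of Theorem~\ref{lemma.fthetaymonotonebdd} to pass between the fixed window $[0,1]$ and an arbitrary $\Delta$, and the observation that $\theta'=0$ is ruled out because the integral would vanish. Nothing is missing.
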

	
	\subsection{Growth order of $f_\theta$ is $\theta$--independent}	
	The first condition in part (A) of Theorems~\ref{lemma.fthetaymonotoneexact}, \ref{lemma.fthetaymonotoneexactbddnotzero} and \ref{lemma.fthetaysubexponentialexact} seems on the one hand curious, and on the other, potentially hard to check. Addressing the curious character of the condition first, it suggests that the solution attains a growth rate $\gamma$ by virtue of the perturbation size (measured through $f_\theta$) being of size $\gamma$ at a \textit{single time lag}, $\theta'$, while intuition leads us to believe that one time lag ought to be as good as another. In fact, this condition is only superficially strange, since it turns out that the condition is equivalent to a much stronger one, namely that the largest order of $f_\theta$ is $\gamma$ for almost all $\theta$; to be specific, it is equivalent to 
	\[
	\text{Leb}\left(\theta\in [0,\Delta]:
	\limsup_{t\to\infty} \frac{\left|\int_{t-\theta}^t f(s)\,ds\right|}{\gamma(t)}=0\right)=0, \quad \Delta>0.
	\]
	The new condition also addresses the potential pitfall of condition (A), namely that it would be hard to check, since, taken at face value, one would need to find the (possibly unique) value of $\theta=\theta'$ at which the growth rate $\gamma$ of $f_\theta$ was attained. Instead, the new equivalent condition asserts that we are essentially free to check the limsup at any time lag, with a good prospect that the growth rate will be identified at that lag.    
	
	\begin{theorem} \label{lemma.fthetaymonotoneexactbettercondition}
		Let $f\in C([0,\infty);\mathbb{R})$, and $y$ be the unique continuous solution to \eqref{eq.y}. 
		Let $\gamma$ be positive and in $C(0,\infty)$, and either non--decreasing or subexponential. Then the following are equivalent:
		\item[(A)] There exists a $\Delta>0$ such that there is a $\theta'\in (0,\Delta]$ and a $C>0$ such that 
		\[
		\limsup_{t\to\infty} \frac{|\int_{t-\theta'}^t f(s)\,ds|}{\gamma(t)}>0,
		\]
		and $\left|\int_{(t-\theta)^+}^t f(s)\,ds \right|\leq K \gamma(t)$ for all $\theta\in [0,\Delta]$, $t\geq 0$;
		\item[(B)] For every $\Delta>0$ there is $\theta'=\theta'(\Delta)\in [0,\Delta]$ and a $K=K(\Delta)>0$ such that 
		\[
		\limsup_{t\to\infty} \frac{|\int_{t-\theta'}^t f(s)\,ds|}{\gamma(t)}>0,
		\]
		and $\left|\int_{(t-\theta)^+}^t f(s)\,ds \right|\leq K \gamma(t)$ for all $\theta\in [0,\Delta]$, $t\geq 0$;
		\item[(C)] For every $\Delta>0$ there is a $K=K(\Delta)>0$ such that 
		\[
		\left|\int_{(t-\theta)^+}^t f(s)\,ds \right|\leq K \gamma(t), \quad \theta\in [0,\Delta], t\geq 0 
		\]
		and  
		\[
		\text{Leb}\left(\theta\in [0,\Delta]:
		\limsup_{t\to\infty} \frac{\left|\int_{t-\theta}^t f(s)\,ds\right|}{\gamma(t)}=0\right)=0; 
		\]
		\item[(D)] 
		\[
		\limsup_{t\to\infty} \frac{|y(t)|}{\gamma(t)} \in (0,\infty).
		\]
	\end{theorem}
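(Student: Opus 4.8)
The equivalences $(A)\Leftrightarrow(B)\Leftrightarrow(D)$ are already in hand: they are precisely the content of Theorem~\ref{lemma.fthetaysubexponentialexact} (when $\gamma$ is subexponential) and of Theorem~\ref{lemma.fthetaymonotoneexact} (when $\gamma$ is non--decreasing), the statement $(D)$ here being the statement labelled $(C)$ there. So the whole task is to splice $(C)$ into this cycle, and I would do so by establishing $(C)\Rightarrow(B)$ and $(B)\Rightarrow(C)$.

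For $(C)\Rightarrow(B)$, fix $\Delta>0$. The bound $|\int_{(t-\theta)^+}^t f(s)\,ds|\leq K\gamma(t)$ for $\theta\in[0,\Delta]$, $t\ge0$ is literally the first clause of $(C)$, so only a single lag $\theta'\in(0,\Delta]$ with positive $\limsup$ needs to be produced. But the measure clause of $(C)$ says that $\{\theta\in[0,\Delta]:\limsup_{t\to\infty}|f_\theta(t)|/\gamma(t)=0\}$ is Lebesgue--null; hence its complement in $[0,\Delta]$ has full measure $\Delta>0$, and, discarding the single point $0$, it still contains points of $(0,\Delta]$. Any such point serves as $\theta'$, so $(B)$ holds (and thus $(A)$ and $(D)$ hold).

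For $(B)\Rightarrow(C)$, the $\text{O}$--bound clause of $(C)$ is again just the $\text{O}$--bound clause of $(B)$, which holds for every $\Delta$, so it is enough to verify the measure clause, and I argue by contradiction. Suppose for some $\Delta_0>0$ the set $E:=\{\theta\in[0,\Delta_0]:\limsup_{t\to\infty}|f_\theta(t)|/\gamma(t)=0\}$ has positive Lebesgue measure. By Lemma~15.9.2 of \cite{GLS}, $E-E$ contains an interval $(-\delta,\delta)$. Given $\beta\in(0,\delta)$, write $\beta=\theta_1-\theta_2$ with $\theta_2<\theta_1$ and $\theta_1,\theta_2\in E$; then for large $t$, $\int_{t-\theta_1}^{t-\theta_2}f(s)\,ds=f_{\theta_1}(t)-f_{\theta_2}(t)=\text{o}(\gamma(t))$, and since the left--hand side is $f_\beta(t-\theta_2)$, I conclude (using $\gamma(t-\theta_2)/\gamma(t)\to1$ from \eqref{eq.psisub} when $\gamma$ is subexponential, and the corresponding finite--lag estimate together with the uniform $\text{O}$--bound on the whole family $\{f_\theta\}$ supplied by $(B)$ in the non--decreasing case) that $\limsup_{t\to\infty}|f_\beta(t)|/\gamma(t)=0$. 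Hence $f_\theta(t)=\text{o}(\gamma(t))$ for every $\theta\in[0,\delta/2]$, so by Theorem~\ref{lemma.fthetaysubexponentiallittleo} (respectively Theorem~\ref{lemma.fthetaymonotonelittleo}) the same holds for every $\theta\ge0$; this forces $\limsup_{t\to\infty}|f_{\theta'}(t)|/\gamma(t)=0$ for every $\theta'>0$, contradicting the first clause of $(B)$. Therefore the measure clause holds, $(C)$ follows, and the cycle is closed.

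The routine part is the manipulation of \eqref{eq.aveRepy}, \eqref{eq.intdelta} and \eqref{eq.yrepave}, but this is already packaged inside Theorems~\ref{lemma.fthetaysubexponentiallittleo}--\ref{lemma.fthetaymonotonelittleo}, which I only need to invoke. The one genuinely delicate step is passing from ``$\int_{t-\theta_1}^{t-\theta_2}f=\text{o}(\gamma(t))$'' to ``$f_\beta(t)=\text{o}(\gamma(t))$'', i.e.\ removing the forward shift of the time argument by $\theta_2$: for subexponential $\gamma$ this is immediate from \eqref{eq.psisub}, but for a merely non--decreasing $\gamma$ the ratio $\gamma(t)/\gamma(t-\theta_2)$ need not stay bounded, and the shift must be absorbed exactly as in the chaining argument spelled out after Theorem~\ref{lemma.fthetaysubexponentiallittleo} (where the uniform $\text{O}$--bound on $\{f_\theta\}$ is what permits the re--localisation). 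Minor attention is also needed for the short--time cases $t\le\theta$ and the endpoint $\theta=\Delta$, precisely as in the earlier proofs.
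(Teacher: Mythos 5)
Your overall strategy matches the paper's: cycle $(A)\Leftrightarrow(B)\Leftrightarrow(D)$ is imported from Theorems~\ref{lemma.fthetaymonotoneexact} and~\ref{lemma.fthetaysubexponentialexact}, $(C)\Rightarrow(B)$ is the trivial implication, and the substance is $(B)\Rightarrow(C)$ (the paper phrases it as $(D)\Rightarrow(C)$, which is the same thing given the equivalences), argued by contradiction via the Steinhaus lemma. You also streamline the paper's proof slightly: the paper uses subadditivity of $L(\theta):=\limsup_t |f_\theta(t)|/\gamma(t)$ to first establish $\mathrm{Leb}(Z^c)>0$, and then after obtaining $L\equiv 0$ on $[0,\Delta]$ by a hand-rolled chaining argument derives a contradiction from $\mathrm{Leb}(Z^c)=0$; you instead invoke Theorem~\ref{lemma.fthetaymonotonelittleo}/\ref{lemma.fthetaysubexponentiallittleo} to jump directly from "$L\equiv 0$ on $[0,\delta/2]$" to "$L\equiv 0$ everywhere" and contradict the first clause of $(B)$, which avoids the subadditivity preamble entirely. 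That simplification is sound and is a genuine improvement in exposition.

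However, the one step you flag as delicate is actually broken as written, and the fix you gesture at does not repair it. You write $\int_{t-\theta_1}^{t-\theta_2}f(s)\,ds = f_{\theta_1}(t)-f_{\theta_2}(t) = o(\gamma(t))$ and then identify the left side with $f_\beta(t-\theta_2)$, which yields $f_\beta(s) = o(\gamma(s+\theta_2))$ after the substitution $s=t-\theta_2$. To convert this into $f_\beta(s)=o(\gamma(s))$ you need $\gamma(s+\theta_2)/\gamma(s)$ to stay bounded; this holds for subexponential $\gamma$ but fails for general non-decreasing $\gamma$ (e.g.\ $\gamma(t)=e^{t^2}$). The uniform $O$-bound $|f_\beta(s)|\le K\gamma(s)$ from $(B)$ cannot rescue this: if $\gamma(s)/\gamma(s+\theta_2)\to 0$ along a sequence, then $|f_\beta(s_n)|/\gamma(s_n)$ can have a positive limit while $|f_\beta(s_n)|/\gamma(s_n+\theta_2)\to 0$, so no contradiction is forced. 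The chaining argument after Theorem~\ref{lemma.fthetaysubexponentiallittleo} does not address this either — it re-localises $O$-bounds across lag lengths, always normalising by $\gamma$ at a time $\le t$, whereas your gap involves a \emph{forward} time shift of the normaliser. The paper's proof avoids the forward shift altogether by choosing a different splitting of the integral: with $\theta_2>\theta_1$ both in $Z$, it writes
\[
f_{\theta_2-\theta_1}(t)=f_{\theta_2}(t)-f_{\theta_1}\bigl(t-(\theta_2-\theta_1)\bigr),
\]
so that on dividing by $\gamma(t)$ the second term is the product of $f_{\theta_1}(t-(\theta_2-\theta_1))/\gamma(t-(\theta_2-\theta_1))\to 0$ (since $\theta_1\in Z$) and $\gamma(t-(\theta_2-\theta_1))/\gamma(t)$, which has $\limsup\le 1$ in both the non-decreasing and the subexponential cases. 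That decomposition, in which every normaliser is $\gamma$ evaluated no later than $t$, is the missing ingredient; substituting it into your argument closes the gap and your shorter finish then goes through.
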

	Another merit of condition (C) is that it essentially only necessary to establish that $f_\theta$ is of order $\gamma$ for $\theta$ on an arbitrarily short interval of the form $[0,\Delta]$. 
	\begin{proof}
		We have already shown that (A), (B) and (D) are equivalent. (C) implies (B) (by simply selecting a $\theta'$ not in the zero measure set), so (C) implies (D). Therefore, if we can show that (D) implies (C), we will be done. The rest of the proof is devoted to this task. 
		
		From (D), the first statement in (C) holds. Using this and the fact that $f$ and $\gamma$ are continuous, we see that the limit 
		\[
		L(\theta):=\limsup_{t\to\infty} \frac{\left|\int_{t-\theta}^t f(s)\,ds\right|}{\gamma(t)}
		\]  
		is finite for each $\theta\in [0,\Delta]$ and moreover $L:[0,\Delta]\to [0,\infty)$ is measurable (in fact, it is easy to show that $L$ can be extended to all of $[0,\infty)$, and that $L:[0,\infty)\to[0,\infty)$ is measurable). This implies that the set 
		\[
		Z=\{\theta\in [0,\Delta]:L(\theta)=0\}
		\] 
		is also measurable. We need to prove that $\text{Leb}(Z)=0$. To start, we show that $L$ is subadditive on $[0,\infty)$; let $\theta_1, \theta_2\geq 0$. Then for $t\geq \theta_1+\theta_2$ we have 
		\[
		\frac{\int_{t-(\theta_1+\theta_2)}^t f(s)\,ds}{\gamma(t)}
		=\frac{\int_{t-(\theta_1+\theta_2)}^{t-\theta_1} f(s)\,ds}{\gamma(t-\theta_1)}\frac{\gamma(t-\theta_1)}{\gamma(t)}
		+
		\frac{\int_{t-\theta_1}^t f(s)\,ds}{\gamma(t)}.
		\]
		Since $\gamma$ is either subexponential or increasing, the quotient involving $\gamma$ has limsup less than or equal to 1 as $t\to\infty$. Taking the absolute value across the above identity, using the triangle inequality, letting $t\to\infty$, and using the definition of $L$, we arrive at $L(\theta_1+\theta_2)\leq L(\theta_2)+L(\theta_1)$.  
		
		Next we notice that (D) implies the second statement in (B), so there is a $\theta'\in (0,\Delta]$ such that $L(\theta')>0$. Now, let $\theta\in (0,\theta'/2)$; then, using the subadditivity of $L$, we get 
		\[
		0<L(\theta')\leq L(\theta'-\theta)+L(\theta).
		\]
		Thus, for each $\theta\in (0,\theta'/2)$ at least one of $L(\theta'-\theta)$ and $L(\theta)$ is positive, which is to say, at least one of $\theta$ and $\theta-\theta'$ is in the complement of $Z$, $Z^c$. Thus, as $Z$ is measurable, so must be $Z^c$, and $\text{Leb}(Z^c)\geq \theta'/2>0$. 
		
		With these preliminaries in place, we show that $\text{Leb}(Z)=0$. Suppose, by way of contradiction, that $\text{Leb}(Z)>0$. Then, by a result of Steinhaus, the set 
		$Z-Z:=\{\theta_2-\theta_1:\theta_1,\theta_2\in Z\}$ must contain an interval of the form $(-\delta,\delta)$. Next, take $\theta_2, \theta_1\in Z$ (note that $\theta_1,\theta_2\in [0,\Delta]$) and without loss of generality, let $\theta_2>\theta_1$, and note that $\theta_2-\theta_1\in [0,\Delta]$. Now, for $t\geq \Delta$, write 
		\[
		\frac{\int_{t-(\theta_2-\theta_1)}^t f(s)\,ds}{\gamma(t)}
		=\frac{\int_{t-\theta_2}^t f(s)\,ds}{\gamma(t)}
		-
		\frac{\int_{t-(\theta_2-\theta_1)-\theta_1}^{t-(\theta_2-\theta_1)} f(s)\,ds}{\gamma(t-(\theta_2-\theta_1))}
		\frac{\gamma(t-(\theta_2-\theta_1))}{\gamma(t)}.
		\]
		Since $\theta_2$ and $\theta_1\in Z$, the quotients involving the integrals on the righthand side have zero limit as $t\to\infty$. On the other hand, the quotient involving $\gamma$ has limsup less than or equal to one, if $\gamma$ is non--decreasing or subexponential. Therefore, taking absolute values on both sides of the identity, using the triangle inequality, taking the limsup as $t\to\infty$, and using the above observations, we get 
		\[
		\lim_{t\to\infty} 	\frac{\int_{t-(\theta_2-\theta_1)}^t f(s)\,ds}{\gamma(t)}=0.
		\]
		Since $\theta_1$ and $\theta_2$ were chosen arbitrarily in $Z$, we have that $L(\theta)=0$ for any $\theta\in Z-Z$, and since $Z-Z$ contains an interval $(-\delta,\delta)$ we have that 
		\[
		\lim_{t\to\infty} \frac{\int_{t-\theta}^t f(s)\,ds}{\gamma(t)}=0, \quad \theta\in [0,\delta),
		\] 
		or that $L(\theta)=0$ for all $\theta\in [0,\delta)$, for some $\delta>0$. 
		
		If $\delta\geq \Delta$, we have that $L(\theta)=0$ for all $\theta\in [0,\Delta]$, but this is impossible, since it would force $\text{Leb}(Z^c)=0$, and we have already shown that $\text{Leb}(Z^c)\geq \theta'/2>0$. Therefore, we must have $\delta<\Delta$. On account of this, there exists an $N\in\mathbb{N}$ such that $N\delta/2\leq \Delta$ and $(N+1)\delta/2>\Delta$. Let $\theta\in [0,\Delta]$ be arbitrary. Then there is a maximal $N(\theta)\leq N$ such that $N(\theta)\delta/2\leq \theta$, and so for $t\geq \theta$, we arrive at the identity
		\begin{multline*}
			\frac{\int_{t-\theta}^t f(s)\,ds}{\gamma(t)}
			=\sum_{j=1}^{N(\theta)} \frac{\int_{t-j\delta/2}^{t-(j-1)\delta/2} f(s)\,ds}{\gamma(t-(j-1)\delta/2)} \cdot\frac{\gamma(t-(j-1)\delta/2)}{\gamma(t)}
			\\+\frac{\int_{t-\theta}^{t-N(\theta)\delta/2} f(s)\,ds}{\gamma(t-N(\theta)\delta/2)}\cdot 
			\frac{\gamma(t-N(\theta)\delta/2)}{\gamma(t)}.
		\end{multline*}
		Because each of the integrals on the righthand side have intervals of length less than or equal to $\delta/2<\delta$, each of the finitely many quotients involving these integrals has zero limit as $t\to\infty$. On the other hand, all the quotients involving $\gamma$'s have limsup less than or equal to 1, using the fact that $\gamma$ is either non--decreasing or subexponential. Therefore, for any choice of $\theta\in [0,\Delta]$, we have shown that 
		\[
		\lim_{t\to\infty} \frac{\int_{t-\theta}^t f(s)\,ds}{\gamma(t)}=0, \quad \theta\in [0,\Delta].
		\]
		Therefore $Z=\{\theta\in [0,\Delta]:L(\theta)=0\}=[0,\Delta]$, so $\text{Leb}(Z^c)=0$. But we have already shown that $\text{Leb}(Z^c)\geq \theta'/2>0$, forcing a contradiction of the original supposition that $\text{Leb}(Z)>0$. Hence we must have $\text{Leb}(Z)=0$, as claimed.  
	\end{proof}
	
	Although the result is reassuring, because it shows that windows of length $\theta$ for which the limsup is positive and finite occur for $\theta\in [0,\Delta]$ a.e., it seems a little paradoxical that windows of length $\theta$ which give rise to $f_\theta(t)=o(\gamma(t))$ cannot be ruled out entirely. We view this as an artifact of the method of proof of the above result, rather than a reflection of a deeper reality. One reason to believe this conjecture is that it is in fact true, if we restrict attention to the cases where the weight function $\gamma$ grows exponentially or superexponentially.  
	\begin{theorem} \label{thm.Zisempty}
		Suppose that $\gamma$ is a positive, continuous and non--decreasing function obeying $\lim_{t\to\infty} \gamma'(t)/\gamma(t)\in (0,\infty]$. Then (A), (B), (D) in Theorem~\ref{lemma.fthetaymonotoneexactbettercondition}	are equivalent statements, which are also equivalent to 
		\begin{enumerate}
			\item[($\text{C}'$)] For every $\Delta>0$ there is a $K=K(\Delta)>0$ such that 
			\[
			\left|\int_{(t-\theta)^+}^t f(s)\,ds \right|\leq K \gamma(t), \quad \theta\in [0,\Delta], t\geq 0 
			\]
			and for all $\theta\in [0,\Delta]$
			\[
			\limsup_{t\to\infty} \frac{\left|\int_{t-\theta}^t f(s)\,ds\right|}{\gamma(t)}>0.
			\]
		\end{enumerate}
	\end{theorem}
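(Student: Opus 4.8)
The plan is to show that the extra hypothesis on $\gamma$ forces the exceptional set
\[
Z=\Big\{\theta\in[0,\Delta]:\limsup_{t\to\infty}\frac{\big|\int_{t-\theta}^t f(s)\,ds\big|}{\gamma(t)}=0\Big\}
\]
to contain no positive point. Since Theorem~\ref{lemma.fthetaymonotoneexactbettercondition} already establishes $(\text{A})\Leftrightarrow(\text{B})\Leftrightarrow(\text{C})\Leftrightarrow(\text{D})$, and condition $(\text{C}')$ plainly implies $(\text{C})$ (a limit superior that is positive for every $\theta\in(0,\Delta]$ leaves at most the single point $0$ in $Z$, while the uniform bound is common to both), it is enough to deduce $(\text{C}')$ from $(\text{D})$. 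So, assuming $(\text{D})$, write $L(\theta)=\limsup_{t\to\infty}|f_\theta(t)|/\gamma(t)$ with $f_\theta(t)=\int_{(t-\theta)^+}^t f(s)\,ds$, and invoke Theorem~\ref{lemma.fthetaymonotoneexactbettercondition}: $(\text{D})$ implies $(\text{C})$, which hands us the uniform bound $|f_\theta(t)|\le K\gamma(t)$ for $\theta\in[0,\Delta]$, $t\ge0$ (this is the first clause of $(\text{C}')$, and it makes $L$ finite on $[0,\Delta]$), together with $\text{Leb}(Z)=0$. It remains to rule out the existence of $\theta_0\in Z$ with $\theta_0>0$.

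The single new analytic ingredient I will use is that $\gamma'(t)/\gamma(t)\to\lambda\in(0,\infty]$ yields, for each fixed $\eta>0$,
\[
\frac{\gamma(t-\eta)}{\gamma(t)}=\exp\!\Big(-\int_{t-\eta}^t\frac{\gamma'(u)}{\gamma(u)}\,du\Big)\longrightarrow e^{-\lambda\eta}\in[0,1)\qquad(t\to\infty),
\]
with the convention $e^{-\infty}=0$; the essential features are that this is a genuine constant limit and that it is \emph{strictly less than one}. Fix now $\theta_0\in Z\cap(0,\Delta]$ and an arbitrary $\theta_1\in(0,\theta_0)$. Splitting integrals gives the two identities
\[
f_{\theta_1}(t)=f_{\theta_0}(t)-f_{\theta_0-\theta_1}(t-\theta_1),\qquad f_{\theta_0-\theta_1}(t)=f_{\theta_0}(t)-f_{\theta_1}\big(t-(\theta_0-\theta_1)\big),
\]
valid for all large $t$. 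Dividing each by $\gamma(t)$, inserting the factor $\gamma(t-\eta)/\gamma(t)$ next to the shifted term, applying the triangle inequality, and passing to $\limsup_{t\to\infty}$ --- using $L(\theta_0)=0$, the displayed ratio limit (with $\eta=\theta_1$, resp.\ $\eta=\theta_0-\theta_1$), the change of variable $\tau=t-\eta$ to identify the limit superior of the shifted term with $L(\theta_0-\theta_1)$, resp.\ $L(\theta_1)$, and the finiteness of $L$ on $[0,\Delta]$ to dispose of the $0\cdot\infty$ case when $\lambda=\infty$ --- produces
\[
L(\theta_1)\le e^{-\lambda\theta_1}L(\theta_0-\theta_1),\qquad L(\theta_0-\theta_1)\le e^{-\lambda(\theta_0-\theta_1)}L(\theta_1).
\]
Chaining the two inequalities gives $L(\theta_1)\le e^{-\lambda\theta_0}L(\theta_1)$; since $L(\theta_1)\in[0,\infty)$ and $e^{-\lambda\theta_0}<1$, this forces $L(\theta_1)=0$. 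As $\theta_1\in(0,\theta_0)$ was arbitrary, $[0,\theta_0]\subseteq Z$, which contradicts $\text{Leb}(Z)=0$. Hence $Z\cap(0,\Delta]=\varnothing$: for every $\theta\in(0,\Delta]$ the limit superior in $(\text{C}')$ is positive, and together with the uniform bound (and with $\Delta>0$ arbitrary) this is exactly $(\text{C}')$.

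I expect the only nontrivial step to be the one just used: recognising that the two ``split-integral'' estimates can be composed so that the \emph{strict} inequality $e^{-\lambda\theta_0}<1$ collapses $L(\theta_1)$ to zero. For merely non-decreasing or subexponential weights the analogous manipulation only returns $L(\theta_1)\le L(\theta_1)$, which is why Theorem~\ref{lemma.fthetaymonotoneexactbettercondition} could push $Z$ no further than a null set; the present argument shows that as soon as $\gamma$ has genuinely exponential (or faster) growth there can be no null set of ``bad'' lags at all. (In reading $(\text{C}')$ one should take $\theta$ over $(0,\Delta]$, since the integral over an empty window vanishes at $\theta=0$; nothing is lost thereby.)
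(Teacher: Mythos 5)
Your proof is correct, and it takes a genuinely different route from the paper. The paper proves $(\text{D})\Rightarrow(\text{C}')$ by contradiction through an auxiliary ODE: if $F(t):=\int_{t-\theta}^t f(s)\,ds=o(\gamma(t))$, it introduces $J(t)=\int_{t-\theta}^t y(s)\,ds$, observes $J'(t)=-J(t)+F(t)$, invokes Theorem~\ref{lemma.fthetaymonotonelittleo} to obtain $J(t)=o(\gamma(t))$, hence $y(t)-y(t-\theta)=J'(t)=o(\gamma(t))$, and then splits into the two cases $\gamma'/\gamma\to\infty$ (where $\gamma(t-\theta)/\gamma(t)\to0$ finishes quickly) and $\gamma'/\gamma\to\mu\in(0,\infty)$ (where a telescoping-sum estimate is needed). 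Your argument instead stays entirely at the level of the functional $L(\theta)=\limsup_{t\to\infty}|f_\theta(t)|/\gamma(t)$: using the two split-integral identities and the genuine limit $\gamma(t-\eta)/\gamma(t)\to e^{-\lambda\eta}\in[0,1)$, you derive $L(\theta_1)\le e^{-\lambda\theta_1}L(\theta_0-\theta_1)$ and the symmetric inequality, chain them to $L(\theta_1)\le e^{-\lambda\theta_0}L(\theta_1)$, and use $e^{-\lambda\theta_0}<1$ together with finiteness of $L$ (from the uniform bound) to force $L(\theta_1)=0$; this puts all of $[0,\theta_0]$ inside $Z$, contradicting $\operatorname{Leb}(Z)=0$ from Theorem~\ref{lemma.fthetaymonotoneexactbettercondition}. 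Your version is case-free (the convention $e^{-\infty}=0$ absorbs $\lambda=\infty$), avoids the auxiliary ODE for $J$ and the little-$o$ theorem, and makes transparent exactly where the hypothesis $\lambda>0$ is used: it turns the subadditivity argument (which, for subexponential or merely monotone $\gamma$, only gives $L(\theta_1)\le L(\theta_1)$) into a strict multiplicative contraction. The paper's route is more in keeping with its ODE-theoretic style and arguably yields more explicit quantitative information along the way, but your argument is shorter and more directly explanatory. Your parenthetical remark about $\theta=0$ is well taken: as stated, $(\text{C}')$ cannot literally hold at $\theta=0$ since the window is empty, so the quantifier should be read over $(0,\Delta]$; the paper's proof implicitly does this by contradicting with ``there exists $\theta>0$''.
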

	\begin{proof}
		It is enough to show that (D) implies ($\text{C}'$). Suppose by way of contradiction that there exists $\theta>0$ such that $F(t):=\int_{t-\theta}^t f(s)\,ds = o(\gamma(t))$ as $t\to\infty$. Define $J(t)=\int_{t-\theta}^t y(s)\,ds$ for $t\geq \theta$. Then we have $J'(t)=-J(t)+F(t)$ for $t\geq \theta$. Since $F(t)=o(\gamma(t))$ as $t\to\infty$ and $\gamma$ is non--decreasing, we can apply Theorem~\ref{lemma.fthetaymonotonelittleo} to $J$ to  show that $J(t)=o(\gamma(t))$ as $t\to\infty$. Therefore $J'(t)=o(\gamma(t))$ as $t\to\infty$, or $y(t)-y(t-\theta)=o(\gamma(t))$ as $t\to\infty$. Therefore, for every $\epsilon>0$ there exists $T(\epsilon)>0$ such that
		\begin{equation} \label{eq.delyogamma}
			|y(t)-y(t-\theta)|<\epsilon \gamma(t),\quad t\geq T(\epsilon). 
		\end{equation}
		Consider the case where $\lim_{t\to\infty} \gamma'(t)/\gamma(t)=+\infty$. Then $\gamma(t-\theta)/\gamma(t)\to 0$ as $t\to\infty$. Since $\limsup_{t\to\infty} |y(t)|/\gamma(t)<+\infty$ by hypothesis, we have that
		\[
		\frac{y(t-\theta)}{\gamma(t)}
		=\frac{y(t-\theta)}{\gamma(t-\theta)}\cdot \frac{\gamma(t-\theta)}{\gamma(t)}\to 0, \quad t\to\infty.
		\]
		Therefore
		\[
		\frac{|y(t)|}{\gamma(t)}\leq \frac{|y(t)-y(t-\theta)|}{\gamma(t)}
		+\frac{|y(t-\theta)|}{\gamma(t)}\to 0, \quad t\to\infty,
		\]
		so $y(t)=o(\gamma(t))$ as $t\to\infty$, contradicting  the hypothesis $\limsup_{t\to\infty} |y(t)|/\gamma(t)>0$; therefore, the original supposition that $\int_{t-\theta}^t f(s)\,ds=o(\gamma(t))$ as $t\to\infty$, for some $\theta>0$, must be false. Hence in the case when $\gamma'(t)/\gamma(t)\to \infty$ as $t\to\infty$, the result is proven. 
		
		We now turn to the other case, wherein $\gamma'(t)/\gamma(t)\to \mu\in (0,\infty)$ as $t\to\infty$, noting that the estimate \eqref{eq.delyogamma} still prevails. Next let $t\geq T(\epsilon)+\theta$. Define $n=n(t)\in\mathbb{N}$ such that $n(t)\theta\leq t$ and $(n(t)+1)\theta>t$. Then there exists $t_0=t_0(t)\in [0,\theta)$ such that $t=t_0+n\theta$. Let $N\in\mathbb{N}$ be so large that $(N+1)\theta\geq T(\epsilon)$ but $N\theta<T(\epsilon)$. Thus, if $t\geq T+\theta$, we have  
		$(n(t)+1)\theta>t\geq T+\theta>(N+1)\theta$, so $n(t)>N$, or equivalently, $n(t)\geq N+1$. Write for such an arbitrary $t\geq T(\epsilon)+\theta$ the telescoping sum  
		\[
		y(t)=y(t_0+n\theta)=y(t_0+N\theta)+\sum_{j=N+1}^n \{y(t_0+j\theta)-y(t_0+(j-1)\theta)\}
		\] 
		Note for each $j\geq N+1$ that 
		$t_0+j\theta\geq t_0+(N+1)\theta\geq (N+1)\theta\geq T(\epsilon)$. Therefore, we have that 
		\[
		|y(t_0+j\theta)-y(t_0+(j-1)\theta)|<\epsilon \gamma(t_0+j\theta).
		\]
		Hence for $t\geq T(\epsilon)+\theta$, we have 
		\[
		|y(t)|\leq |y(t_0+N\theta)|+\sum_{j=N+1}^n
		\frac{\epsilon}{\theta} \gamma(t_0+j\theta)\theta.
		\]
		Since $\gamma$ is increasing, $\gamma(t_0+j\theta)\theta\leq \int_{t_0+j\theta}^{t_0+(j+1)\theta} \gamma(s)\,ds$. Hence for $t\geq T+\theta$, we get
		\[
		|y(t)|\leq %|y(t_0+N\theta)|+\sum_{j=N+1}^n
		%\frac{\epsilon}{\theta} \int_{t_0+j\theta}^{t_0+(j+1)\theta} \gamma(s)\,ds=
		|y(t_0+N\theta)|+\frac{\epsilon}{\theta} 
		\int_{t_0+(N+1)\theta}^{t_0+(n(t)+1)\theta} \gamma(s)\,ds,
		\]
		or 
		\[
		|y(t)|\leq 	|y(t_0+N\theta)|+\frac{\epsilon}{\theta} 
		\int_{t_0+(N+1)\theta}^{t+\theta} \gamma(s)\,ds.
		\]
		Since $\gamma'(t)/\gamma(t)\to \mu$ as $t\to\infty$ implies $\gamma(t+\theta)/\gamma(t)\to e^{\mu \theta}$ as $t\to\infty$, we may apply L'H\^opital's rule to the indeterminate limit 
		\[
		\lim_{t\to\infty} \frac{\int_{t_0+(N+1)\theta}^{t+\theta} \gamma(s)\,ds}{\gamma(t)}
		\]
		which is of the form $\infty/\infty$ to get 
		\[
		\lim_{t\to\infty} \frac{\int_{t_0+(N+1)\theta}^{t+\theta} \gamma(s)\,ds}{\gamma(t)}
		=\lim_{t\to\infty} \frac{\gamma(t+\theta)}{\gamma'(t)}
		=\lim_{t\to\infty} \frac{\gamma(t+\theta)}{\gamma(t)}\cdot\frac{\gamma(t)}{\gamma'(t)}=\frac{e^{\mu \theta}}{\mu}.
		\]
		Therefore as $\gamma(t)\to\infty$ as $t\to\infty$, we get 
		\[
		\limsup_{t\to\infty} \frac{|y(t)|}{\gamma(t)}\leq \frac{\epsilon}{\theta} \cdot \frac{e^{\mu \theta}}{\mu}.
		\]
		Since $\epsilon>0$ is arbitrary, letting it tend to zero gives $y(t)=o(\gamma(t))$ as $t\to\infty$, again contradicting the hypothesis that $\limsup_{t\to\infty} |y(t)|/\gamma(t)>0$. Once again, this means the original supposition that $\int_{t-\theta}^t f(s)\,ds=o(\gamma(t))$ as $t\to\infty$, for some $\theta>0$, must be false, completing the proof in the case of exponential $\gamma$.
	\end{proof}

		\section{Finite--Dimensional Deterministic Equation}
		As promised at the start, we can apply the scalar deterministic results to more complicated systems. In this section, we consider the multidimensional deterministic ODEs, while in Part II, we consider a classification of the mean square of a finite dimensional affine stochastic differential equation which is both deterministically and stochastically forced.
		
		We turn now to the finite--dimensional deterministic ODE, given by    
		\begin{equation} \label{eq.xmult}
			x'(t)=Ax(t)+F(t), \quad t\geq 0; \quad x(0)=\zeta
		\end{equation}
		where the (unique continuous) solution is $\mathbb{R}^d$--valued, $A$ is a $d\times d$ matrix all of whose eigenvalues have negative real parts, $F\in C([0,\infty);\mathbb{R}^d)$ and $\zeta\in \mathbb{R}^d$.
		
		\subsection{Notation, assumptions and representations}
		Rather than write down a variation of constants formula for \eqref{eq.xmult} directly, introduce the scalar equations $i=1,\ldots,d$: 
		\[
		y_i'(t)=-y_i(t)+F_i(t), \quad t\geq 0; \quad y_i(0)=0,
		\] 
		where $F_i$ is the $i$--th component of $F$. Letting $y$ be the $\mathbb{R}^d$--vector with $i$--th component $y_i$, we have 
		\begin{equation} \label{eq.yvector}
			y'(t)=-I_d y(t)+F(t) = -y(t)+F(t), \quad t\geq 0; \quad y(0)=0,
		\end{equation}
		where $I_d$ denotes the $d\times d$ identity matrix. Define $z(t)=x(t)-y(t)$ for $t\geq 0$ and let $G(t)=(I_d+A)Y(t)$ for $t\geq 0$. Then 
		\[
		z'(t) = Az(t)+Ay(t) + I_d y(t) = Az(t)+G(t), \quad t\geq 0; \quad z(0)=\zeta.
		\]
		Let $\Phi$ be the fundamental matrix solution associated with \eqref{eq.xmult} in the case $F\equiv 0$. Therefore $\Phi$ is the unique continuous $d\times d$ matrix--valued function which is the solution to  
		\begin{equation} \label{eq.fms}
			\Phi'(t)=A\Phi(t) = \Phi(t)A, \quad t\geq 0, \quad \Phi(0)=I_d.
		\end{equation}
		In this framework, we have that 
		\[
		z(t)= \Phi(t)\zeta + \int_0^t \Phi(t-s)G(s)\,ds, \quad t\geq 0,
		\]
		so 
		\begin{equation} \label{eq.xrepymultid}
			x(t) = y(t)+\Phi(t)\zeta+  \int_0^t \Phi(t-s)(I_d+A)y(s)\,ds, \quad t\geq 0.
		\end{equation}
		Notice that this allows us to write the $F$--dependence in $x$ purely through $y$. On the other hand, writing 
		$x'(t)=-x(t)+F(t)+(I_d+A)x(t)$ for $t\geq 0$, 
		we have that 
		\[
		x(t)= e^{-t}\zeta+y(t)+\int_0^t e^{-(t-s)}(I_d+A)x(s)\,ds
		\]
		which rearranges to give
		\begin{equation}  \label{eq.yrepxmultid}
			y(t)=x(t)-e^{-t}\zeta-\int_0^t e^{-(t-s)}(I_d+A)x(s)\,ds, \quad t\geq 0.
		\end{equation}
		Notice that the assumption that the real parts of the eigenvalues of $A$ are negative leads to the exponential estimate
		\[
		\|\Phi(t)\|\leq Ke^{-\alpha t}, \quad t\geq 0
		\]
		for some $K>0$ and $\alpha>0$ (both $t$--independent). Here $\|\cdot\|$ stands for any matrix norm; we choose one which is submultiplicative: that is to say, for any $d\times d$ matrices with real entries, we have   
		\[
		\|A B\|\leq \|A\|\|B\|.
		\] 
		We use the notation $\|x\|$ for a norm of $x$ in $\mathbb{R}^d$, noting that norm equivalence allows us to pick out the behaviour of each component in the $y$'s, using the $1$--norm, for instance:  $\|y(t)\|=\sum_{i=1}^d |y_i(t)|$. Finally, we note that the matrix norms
		\[
		\|A\|=\sup_{\|x\|_p=1} \|Ax\|_p
		\] 
		induced from the $p$--norms (where $p\geq 1$)
		\[
		\|x\|_p = \left(\sum_{j=1}^d |x_j|^p\right)^{1/p}
		\] 
		have the submultiplicative property.
		
		\subsection{Main results}
		With this notation and representations in place, we can prove the desired results. We start with considering when solutions are $\text{O}(\gamma(t))$ as $t\to\infty$.  
		
		\begin{theorem} \label{thm.multidbigO}
			Let $F$ be continuous, $\gamma$ be continuous and non--decreasing. Suppose all the eigenvalues of $A$ have negative real parts. Let $x$ be the unique continuous solution of \eqref{eq.xmult}. Then the following are equivalent:
			\begin{enumerate}
				\item[(A)] There exists $\Delta>0$ and $K>0$ such that 
				\[
				\left\| \int_{(t-\theta)^+}^t F(s)\,ds \right\|\leq K\gamma(t), \quad \theta\in [0,\Delta], \quad t\geq 0;
				\]
				\item[($\text{A}'$)] For every $\Delta>0$, there is a $K=K(\Delta)>0$ such that 
				\[
				\left\| \int_{(t-\theta)^+}^t F(s)\,ds \right\|\leq K\gamma(t), \quad \theta\in [0,\Delta], \quad t\geq 0;
				\]
				\item[(B)] $y(t)=\text{O}(\gamma(t))$ as $t\to\infty$; 
				\item[(C)]  $x(t)=\text{O}(\gamma(t))$ as $t\to\infty$. 
			\end{enumerate}
		\end{theorem}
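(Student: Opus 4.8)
The plan is to prove the cycle of implications $(\text{A})\Rightarrow(\text{B})\Rightarrow(\text{C})\Rightarrow(\text{A}')\Rightarrow(\text{A})$, reducing the two scalar endpoints to Theorem~\ref{lemma.fthetaymonotone} applied componentwise and handling the two middle implications with the variation-of-constants identities \eqref{eq.xrepymultid} and \eqref{eq.yrepxmultid}. The standing observation I would use throughout is that, since $\gamma$ is positive, continuous and non-decreasing, we have $\gamma(t)\ge\gamma(0)>0$, so $e^{-\alpha t}\le\gamma(t)/\gamma(0)$ for any $\alpha>0$; hence every exponentially decaying term is automatically $\text{O}(\gamma(t))$, and, again by monotonicity, $\int_0^t e^{-\alpha(t-s)}\gamma(s)\,ds\le\gamma(t)/\alpha$. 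I would also use norm equivalence on $\mathbb{R}^d$ freely, concretely working with $\|v\|=\sum_{i=1}^d|v_i|$, which lets me pass between a bound on $\|\int_{(t-\theta)^+}^t F(s)\,ds\|$ and the scalar bounds on each $|\int_{(t-\theta)^+}^t F_i(s)\,ds|$.

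For $(\text{A})\Rightarrow(\text{B})$: from the hypothesis, norm equivalence yields constants $K_i$ with $|\int_{(t-\theta)^+}^t F_i(s)\,ds|\le K_i\gamma(t)$ for $\theta\in[0,\Delta]$ and $t\ge0$; applying the implication $(\text{A})\Rightarrow(\text{C})$ of Theorem~\ref{lemma.fthetaymonotone} to each scalar equation $y_i'=-y_i+F_i$ gives $y_i(t)=\text{O}(\gamma(t))$, and summing the $d$ estimates gives $\|y(t)\|=\text{O}(\gamma(t))$, which is (B).

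For $(\text{B})\Rightarrow(\text{C})$: with $\|y(t)\|\le M\gamma(t)$, I would substitute into \eqref{eq.xrepymultid}, use the submultiplicative exponential bound $\|\Phi(t)\|\le Ke^{-\alpha t}$, and estimate
\[
\left\|\int_0^t\Phi(t-s)(I_d+A)y(s)\,ds\right\|\le K\|I_d+A\|M\int_0^t e^{-\alpha(t-s)}\gamma(s)\,ds\le\frac{K\|I_d+A\|M}{\alpha}\gamma(t),
\]
so that, together with $\|\Phi(t)\zeta\|=\text{O}(\gamma(t))$ and the term $y(t)$ itself, $x(t)=\text{O}(\gamma(t))$. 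The implication $(\text{C})\Rightarrow(\text{A}')$ is the mirror image: with $\|x(t)\|\le D\gamma(t)$, the identity \eqref{eq.yrepxmultid} together with the same two monotonicity estimates (applied with $\alpha=1$) gives $\|y(t)\|=\text{O}(\gamma(t))$, hence each $y_i(t)=\text{O}(\gamma(t))$; feeding this into the implication $(\text{C})\Rightarrow(\text{B})$ of Theorem~\ref{lemma.fthetaymonotone} produces, for every $\Delta>0$, constants $C_i(\Delta)$ with $|\int_{(t-\theta)^+}^t F_i(s)\,ds|\le C_i(\Delta)\gamma(t)$ for $\theta\in[0,\Delta]$, and summing over $i$ gives $(\text{A}')$. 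Finally $(\text{A}')\Rightarrow(\text{A})$ is immediate by specialising to a single value of $\Delta$.

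I do not anticipate a genuine obstacle: all the substance is already packaged in the scalar Theorem~\ref{lemma.fthetaymonotone} and in the exact representations \eqref{eq.xrepymultid}--\eqref{eq.yrepxmultid}, and what remains is the bookkeeping with norm equivalence and with the elementary bound $\int_0^t e^{-\alpha(t-s)}\gamma(s)\,ds\le\gamma(t)/\alpha$ for non-decreasing $\gamma$. The one point that needs a little care is invoking the scalar result in the correct direction at each end of the cycle — the hypothesis ``there exists $\Delta$'' feeds the scalar $(\text{A})\Rightarrow(\text{C})$, whereas recovering the ``for every $\Delta$'' statement $(\text{A}')$ needs the scalar $(\text{C})\Rightarrow(\text{B})$ — but Theorem~\ref{lemma.fthetaymonotone} supplies both directions, so the loop closes cleanly.
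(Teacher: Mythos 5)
Your proof is correct and follows essentially the same approach as the paper: reduce to the scalar Theorem~\ref{lemma.fthetaymonotone} componentwise via the $1$--norm, and pass between $x$ and $y$ using the representations \eqref{eq.xrepymultid} and \eqref{eq.yrepxmultid} together with the monotonicity bounds $e^{-\alpha t}\le\gamma(t)/\gamma(0)$ and $\int_0^t e^{-\alpha(t-s)}\gamma(s)\,ds\le\gamma(t)/\alpha$. The only (cosmetic) difference is organisational: the paper proves $(\text{C})\Leftrightarrow(\text{B})$, then $(\text{A})\Leftrightarrow(\text{B})$, and handles $(\text{A})\Leftrightarrow(\text{A}')$ by analogy with Theorem~\ref{lemma.fthetaysubexponential}, whereas your single cycle $(\text{A})\Rightarrow(\text{B})\Rightarrow(\text{C})\Rightarrow(\text{A}')\Rightarrow(\text{A})$ closes the loop without a separate argument for $(\text{A})\Leftrightarrow(\text{A}')$ — a slight economy, but not a different route.
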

		\begin{proof}
			We show (C) implies (B); since $\|x(t)\|\leq K\gamma(t)$ for $t\geq 0$, we apply the triangle inequality to \eqref{eq.yrepxmultid} to get 
			\[
			\|y(t)\|\leq \|x(t)\|+e^{-t}\|\zeta\|+\int_0^t e^{-(t-s)}\|I_d+A\|\|x(s)\|\,ds, \quad t\geq 0.	
			\]
			Using the monotonicity, we get
			\[
			\|y(t)\|\leq K\gamma(t)+e^{-t}\|\zeta\|+K\|I_d+A\|\int_0^t e^{-(t-s)}\,ds \cdot \gamma(t).	
			\]
			Since $e^{-t}\leq 1\leq \gamma(t)/\gamma(0)$, all terms on the right are  $\text{O}(\gamma(t))$ as $t\to\infty$. 
			
			To show (B) implies (C), start with $\|y(t)\|\leq L\gamma(t)$. Applying the triangle inequality to \eqref{eq.xrepymultid}, and then using the estimates on $y$ and $\Phi$ yields
			\begin{align*}
				\|x(t)\| & \leq \|y(t)\|+\|\Phi(t)\|\|\zeta\|+  \int_0^t \|\Phi(t-s)\|\|(I_d+A)\|\|y(s)\|\,ds\\
				& \leq L\gamma(t)+Ke^{-\alpha t}\|\zeta\|+  \int_0^t Ke^{-\alpha(t-s)}\|I_d+A\|L\gamma(s)\,ds\\
				&\leq L\gamma(t)+K\|\zeta\| \frac{\gamma(t)}{\gamma(0)}+  K\frac{1}{\alpha}\|I_d+A\|L\gamma(t),
			\end{align*}
			so $x(t)=\text{O}(\gamma(t))$, as required. 
			
			The proof that (A) and ($\text{A}'$) are equivalent is similar to that showing statements (A) and (B) are equivalent in Theorem~\ref{lemma.fthetaysubexponential}, so we omit it. 
			To show (A) and (B) are equivalent, note by considering the 1--norm that (A) is equivalent to 
			having 
			\[
			\left|\int_{(t-\theta)^+}^t F_i(s)\,ds \right|\leq k \gamma(t), \quad t\geq 0, \quad \theta\in [0,\Delta],
			\]
			for some $k>0$, where $|\cdot|$ is nothing but the scalar absolute value.
			Therefore, fixing $i=1,\ldots, d$, and noting that $y_i$ obeys $y_i'=-y_i+F_i$, this last statement is equivalent to $|y_i(t)|\leq C_i\gamma(t)$ for all $t\geq 0$ for some constant $C_i>0$, which is in turn equivalent to $\|y(t)\|\leq C\gamma(t)$ for all $t\geq 0$, and for some $t$--independent $C>0$. Since this last statement is nothing other than (B), the final equivalence is proven. 
		\end{proof}
		
		This proof we offer as a prototype; from it, we see that all scalar results from Section 2 have direct analogues in the multi--dimensional case, and the proofs are trivial adaptations of the scalar ones. 
		
		We state some further results. 
		
		\begin{theorem} \label{thm.littleomultidimensional}
			Let $F$ be continuous, $\gamma$ be continuous and non--decreasing. Suppose all the eigenvalues of $A$ have negative real parts. Let $x$ be the unique continuous solution of \eqref{eq.xmult}. Then the following are equivalent:
			\begin{enumerate}
				\item[(A)] There exists $\Delta>0$ such that  
				\[
				\int_{t-\theta}^t F(s)\,ds = \text{o}(\gamma(t)), \quad t \to\infty,\quad \theta\in [0,\Delta];
				\]
				\item[($\text{A}'$)] For every $\Delta>0$
				\[
				\int_{t-\theta}^t F(s)\,ds = \text{o}(\gamma(t)), \quad t \to\infty,\quad \theta\in [0,\Delta];
				\]
				\item[(B)] $y(t)=\text{o}(\gamma(t))$ as $t\to\infty$; 
				\item[(C)]  $x(t)=\text{o}(\gamma(t))$ as $t\to\infty$. 
			\end{enumerate}
		\end{theorem}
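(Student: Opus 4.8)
The plan is to follow the template of the proof of Theorem~\ref{thm.multidbigO}, replacing the $\text{O}(\gamma)$ bounds by $\text{o}(\gamma)$ estimates throughout and invoking the scalar little-$\text{o}$ result Theorem~\ref{lemma.fthetaymonotonelittleo} in place of the scalar big-$\text{O}$ one. The chain of implications I would establish is (C)$\Rightarrow$(B)$\Rightarrow$(C) using the representations \eqref{eq.yrepxmultid} and \eqref{eq.xrepymultid}, then (A)$\Leftrightarrow$(A$'$) exactly as in the previous theorem, and finally (A)$\Leftrightarrow$(B) by decoupling \eqref{eq.yvector} into scalar components.

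For (C)$\Rightarrow$(B): assuming $\|x(t)\|=\text{o}(\gamma(t))$, apply the triangle inequality to \eqref{eq.yrepxmultid} to get
\[
\|y(t)\|\le \|x(t)\|+e^{-t}\|\zeta\|+\|I_d+A\|\int_0^t e^{-(t-s)}\|x(s)\|\,ds.
\]
The first term is $\text{o}(\gamma(t))$ by hypothesis; the second is $\text{O}(e^{-t})$, hence $\text{o}(\gamma(t))$ because $\gamma$ is positive and non-decreasing so $\gamma(t)\ge\gamma(0)>0$. For the convolution I would record the auxiliary fact, the monotone-weight analogue of \eqref{eq.littleosubexpconv}, that if $g$ is continuous with $g(t)=\text{o}(\gamma(t))$ and $\gamma$ is positive and non-decreasing then $\int_0^t e^{-\alpha(t-s)}g(s)\,ds=\text{o}(\gamma(t))$ for any $\alpha>0$: split the integral at a threshold $T(\epsilon)$ beyond which $|g(s)|\le\epsilon\gamma(s)$, bound the tail by $\epsilon\gamma(t)\int_T^t e^{-\alpha(t-s)}\,ds\le(\epsilon/\alpha)\gamma(t)$ using monotonicity, and absorb the head into an $\text{O}(e^{-\alpha t})$ term. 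Applying this with $\alpha=1$, $g=\|x(\cdot)\|$ gives (B).

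For (B)$\Rightarrow$(C): assuming $\|y(t)\|=\text{o}(\gamma(t))$, the triangle inequality in \eqref{eq.xrepymultid} together with $\|\Phi(t)\|\le Ke^{-\alpha t}$ gives
\[
\|x(t)\|\le \|y(t)\|+Ke^{-\alpha t}\|\zeta\|+K\|I_d+A\|\int_0^t e^{-\alpha(t-s)}\|y(s)\|\,ds,
\]
and the same auxiliary estimate shows each term on the right is $\text{o}(\gamma(t))$. For (A)$\Leftrightarrow$(B), pass to the $1$-norm so that (A) is equivalent to $\left|\int_{t-\theta}^t F_i(s)\,ds\right|=\text{o}(\gamma(t))$ for each $i=1,\ldots,d$ and all $\theta\in[0,\Delta]$; since $y_i$ solves $y_i'=-y_i+F_i$, Theorem~\ref{lemma.fthetaymonotonelittleo} makes this equivalent to $y_i(t)=\text{o}(\gamma(t))$ for each $i$, and summing over $i$ in the $1$-norm this is equivalent to $\|y(t)\|=\text{o}(\gamma(t))$, i.e.\ (B). The equivalence (A)$\Leftrightarrow$(A$'$) is proved verbatim as the corresponding step in Theorem~\ref{lemma.fthetaysubexponentiallittleo} and would be omitted.

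I do not expect a genuine obstacle: all the substantive content is already contained in the scalar Theorem~\ref{lemma.fthetaymonotonelittleo}, and the only point needing care is the routine convolution lemma above (the monotone counterpart of \eqref{eq.littleosubexpconv}), which is what guarantees that the transient terms $\Phi(t)\zeta$ and $\Phi\ast(I_d+A)y$ — respectively $e^{-t}\zeta$ and $e^{-\cdot}\ast(I_d+A)x$ — do not spoil the $\text{o}(\gamma)$ behaviour when passing between $x$ and $y$.
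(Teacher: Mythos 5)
Your proposal is correct and follows exactly the route the paper intends: the paper states this theorem without proof, saying immediately after the prototype (Theorem~\ref{thm.multidbigO}) that all scalar results carry over by trivial adaptation, and your argument — (C)$\Leftrightarrow$(B) via the representations \eqref{eq.yrepxmultid} and \eqref{eq.xrepymultid}, (A)$\Leftrightarrow$(B) by componentwise reduction to the scalar Theorem~\ref{lemma.fthetaymonotonelittleo} in the $1$-norm, and (A)$\Leftrightarrow$(A$'$) falling out for free once (B) is inserted in between — is precisely that adaptation. The auxiliary monotone convolution estimate you isolate (split at $T(\epsilon)$, bound the tail by $(\epsilon/\alpha)\gamma(t)$ using monotonicity, absorb the head into $O(e^{-\alpha t})=o(\gamma(t))$ since $\gamma\ge\gamma(0)>0$) is the correct monotone counterpart of \eqref{eq.littleosubexpconv} and is indeed the only new ingredient; it is also implicit in the sketch preceding Theorem~\ref{lemma.fthetaymonotonelittleo}. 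One trivial slip: for the (A)$\Leftrightarrow$(A$'$) equivalence you cite Theorem~\ref{lemma.fthetaysubexponentiallittleo} (the subexponential case), whereas the relevant monotone-weight analogue is Theorem~\ref{lemma.fthetaymonotonelittleo}; but as noted, once (A)$\Leftrightarrow$(B) is in place the $\Delta$-independence of (B) already yields (A)$\Leftrightarrow$(A$'$) without any appeal to that theorem.
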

		
		To get the rate of the fluctuations or growth exactly, we combine the last two results. We give the proof here so that it can be seen how one can extract information about a particular component.
		
		\begin{theorem} \label{thm.finitedimerateexact}
			Let $F$ be continuous, $\gamma$ be continuous and non--decreasing. Suppose all the eigenvalues of $A$ have negative real parts. Let $x$ be the unique continuous solution of \eqref{eq.xmult}. Then the following are equivalent:
			\begin{enumerate}
				\item[(A)]  There exist $\Delta>0$, $K>0$ such that 
				\[
				\left\| \int_{(t-\theta)^+}^t F(s)\,ds \right\|\leq K\gamma(t), \quad \theta\in [0,\Delta], \quad t\geq 0,
				\]
				and there also exist $i\in \{1,\ldots, d\}$, $\theta'>0$  such that 
				\[
				\limsup_{t\to\infty} \frac{|\int_{t-\theta'}^t F_i(s)\,ds|}{\gamma(t)} >0;
				\]
				\item[($\text{A}'$)] For every $\Delta>0$ there is $K=K(\Delta)>0$ such that 
				\[
				\left\| \int_{(t-\theta)^+}^t F(s)\,ds \right\|\leq K\gamma(t), \quad \theta\in [0,\Delta], \quad t\geq 0;
				\]	
				and there exist $i\in \{1,\ldots, d\}$, $\theta'\in (0,\Delta]$ such that  
				\[
				\limsup_{t\to\infty} \frac{|\int_{t-\theta'}^t F_i(s)\,ds|}{\gamma(t)} >0;
				\]	
				\item[($\text{A}''$)] For every $\Delta>0$ there exists $K=K(\Delta)>0$ such that 
				\[
				\left\| \int_{(t-\theta)^+}^t F(s)\,ds \right\|\leq K\gamma(t), \quad \theta\in [0,\Delta], \quad t\geq 0
				\]	
				and there also exists $i\in \{1,\ldots, d\}$ such that  
				\[
				\text{Leb}\left(\theta\in [0,\Delta]: 
				\limsup_{t\to\infty} \frac{|\int_{t-\theta}^t F_i(s)\,ds|}{\gamma(t)} =0 \right)=0;
				\]			
				\item[(B)] 
				\[
				\limsup_{t\to\infty}\frac{\|y(t)\|}{\gamma(t)} \in (0,\infty);  
				\]
				\item[(C)] 	
				\[
				\limsup_{t\to\infty}\frac{\|x(t)\|}{\gamma(t)} \in (0,\infty).  
				\]  
			\end{enumerate}
		\end{theorem}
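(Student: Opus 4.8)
The plan is to obtain the whole chain of equivalences by combining the componentwise scalar exact-rate results (Theorems~\ref{lemma.fthetaymonotoneexact} and \ref{lemma.fthetaymonotoneexactbettercondition}, which apply here because $\gamma$ is non--decreasing) with the multidimensional big--$\mathrm{O}$ and little--$\mathrm{o}$ characterisations (Theorems~\ref{thm.multidbigO} and \ref{thm.littleomultidimensional}) and the representations \eqref{eq.xrepymultid}--\eqref{eq.yrepxmultid}. The backbone is fourfold: reduce the vector forcing conditions to scalar conditions on the $F_i$ via norm equivalence; translate each such scalar condition into a statement about $\limsup_{t\to\infty}|y_i(t)|/\gamma(t)$; reassemble the components into a statement about $\limsup_{t\to\infty}\|y(t)\|/\gamma(t)$; and finally transfer between $y$ and $x$.

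First I would settle (B)$\Leftrightarrow$(C). By Theorem~\ref{thm.multidbigO} the two limsups are simultaneously finite, namely precisely when $y=\mathrm{O}(\gamma)$, equivalently $x=\mathrm{O}(\gamma)$, so it only remains to show $\limsup_{t\to\infty}\|y(t)\|/\gamma(t)>0$ if and only if $\limsup_{t\to\infty}\|x(t)\|/\gamma(t)>0$. I would deduce this by contraposition from Theorem~\ref{thm.littleomultidimensional}: if $\limsup\|y\|/\gamma=0$ then $y=\mathrm{o}(\gamma)$, hence $x=\mathrm{o}(\gamma)$, so $\limsup\|x\|/\gamma=0$; and symmetrically. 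Combining with the finiteness gives (B)$\Leftrightarrow$(C).

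Next I would reduce the forcing conditions. Using the $1$--norm, $\bigl\|\int_{(t-\theta)^+}^t F(s)\,ds\bigr\|\le K\gamma(t)$ for all $\theta\in[0,\Delta]$ is equivalent to $\bigl|\int_{(t-\theta)^+}^t F_i(s)\,ds\bigr|\le k\gamma(t)$ for every $i$ and all $\theta\in[0,\Delta]$, which by Theorem~\ref{lemma.fthetaymonotone} is equivalent to $|y_i(t)|\le C_i\gamma(t)$ for every $i$, i.e.\ to $\|y\|=\mathrm{O}(\gamma)$; moreover the one--$\Delta$ and all--$\Delta$ forms of this bound are themselves equivalent by the extension argument already employed in Theorem~\ref{thm.multidbigO}. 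As for the second clauses: in (A)/(A$'$) the requirement that $\limsup|\int_{t-\theta'}^t F_i|/\gamma>0$ for some $i$ and some $\theta'$, taken together with the bound, is exactly hypothesis (A)/(B) of the scalar Theorem~\ref{lemma.fthetaymonotoneexact} applied to $y_i$, hence equivalent to $\limsup|y_i|/\gamma\in(0,\infty)$; in (A$''$) the Lebesgue--null clause for some $i$, taken together with the bound, is exactly hypothesis (C) of the scalar Theorem~\ref{lemma.fthetaymonotoneexactbettercondition} applied to $y_i$, again equivalent to $\limsup|y_i|/\gamma\in(0,\infty)$. Note it suffices to produce a single index $i$ here, so the location of the ``for some $i$'' and ``for every $\Delta$'' quantifiers causes no difficulty.

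Finally I would reassemble: since $\|y(t)\|=\sum_{i=1}^d|y_i(t)|$ in the $1$--norm, nonnegativity gives $\limsup\|y\|/\gamma\ge\limsup|y_i|/\gamma$ for each $i$, while subadditivity of $\limsup$ gives $\limsup\|y\|/\gamma\le\sum_i\limsup|y_i|/\gamma$. Hence ``$\|y\|=\mathrm{O}(\gamma)$ together with $\limsup|y_i|/\gamma>0$ for some $i$'' holds if and only if $\limsup\|y\|/\gamma\in(0,\infty)$, i.e.\ if and only if (B) holds. Threading the three reductions through then yields (A)$\Leftrightarrow$(A$'$)$\Leftrightarrow$(A$''$)$\Leftrightarrow$(B)$\Leftrightarrow$(C). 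I expect the only genuinely delicate step to be the equivalence $\limsup\|x\|/\gamma>0\iff\limsup\|y\|/\gamma>0$, since it cannot be read off pointwise from the representation of $x$ in terms of $y$ — the convolution term could in principle manufacture or cancel oscillations — and the clean route is precisely the contrapositive application of the little--$\mathrm{o}$ theorem. Everything else is routine bookkeeping with norm equivalence and the already--established scalar and multidimensional results.
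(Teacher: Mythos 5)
Your proposal is correct and follows essentially the same route as the paper's proof: establish (B)$\Leftrightarrow$(C) by combining Theorems~\ref{thm.multidbigO} and \ref{thm.littleomultidimensional} (the latter applied contrapositively to transfer positivity of the limsup), and establish (A)$\Leftrightarrow$(A$'$)$\Leftrightarrow$(A$''$)$\Leftrightarrow$(B) by reducing to the scalar componentwise equations $y_i'=-y_i+F_i$, invoking Theorems~\ref{lemma.fthetaymonotoneexact} and \ref{lemma.fthetaymonotoneexactbettercondition}, and reassembling via the $1$--norm. The only difference is one of exposition: you spell out the subadditivity bookkeeping for $\limsup\|y\|_1/\gamma$ explicitly, whereas the paper handles the same reassembly more tersely by contradiction.
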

		\begin{proof} By earlier discussions, all the conditions in (A) are equivalent, so our proof simply needs to show that (A), (B) and (C) are equivalent. 
			
			We start by showing (C) implies (B). 
			If (C) holds, then $y(t)=\text{O}(\gamma(t))$ as $t\to\infty$. Suppose by way of contradiction that $y(t)=\text{o}(\gamma(t))$ as $t\to\infty$. But then $x(t)=o(\gamma(t))$ as $t\to\infty$, contradicting (C). Hence (C) implies (B). 
			
			Conversely, if (B) holds, then $x(t)=\text{O}(\gamma(t))$ as $t\to\infty$. Suppose by way of contradiction that $x(t)=\text{o}(\gamma(t))$ as $t\to\infty$; but then $y(t)=\text{o}(\gamma(t))$ as $t\to\infty$, contradicting the hypothesis (B). Thus (B) implies (C), and we have shown that (B) and (C) are equivalent. 
			
			We now show that (A) and (B) are equivalent. If (A) holds, then $y(t)=\text{O}(\gamma(t))$ as $t\to\infty$. Suppose by way of contradiction that $y(t)=o(\gamma(t))$ as $t\to\infty$. Then each $y_i(t)=o(\gamma(t))$ as $t\to\infty$, and therefore we have for each $i=1,\ldots,d$ and each $\theta\in [0,1]$ that $\int_{t-\theta'}^t F_i(s)\,ds=o(\gamma(t))$ as $t\to\infty$. But this contradicts the first part of (A), which is true by hypothesis. Thus, $y(t)$ cannot be $\text{o}(\gamma(t))$ as $t\to\infty$ and hence (B) holds. Thus (A) implies (B). 
			
			If (B) holds, the second part of (A) holds. Suppose now by way of contradiction that the first part of (A) is false, so we have that 
			$\int_{t-\theta}^t F_i(s)\,ds = o(\gamma(t))$ as $t\to\infty$ for all $\theta\in [0,1]$ and each $i\in \{1,\ldots,d\}$. Then 
			\[
			\left\| \int_{t-\theta}^t F(s)\,ds\right\|_1 = \sum_{i=1}^d 
			\left|\left[\int_{t-\theta}^t F(s)\,ds\right]\right|
			=\sum_{i=1}^d 
			\left|\int_{t-\theta}^t F_i(s)\,ds\right|
			=\text{o}(\gamma(t)), \quad t\to\infty.
			\] 
			But this implies that $y(t)=\text{o}(\gamma(t))$ as $t\to\infty$, which contradicts (B). Hence (B) implies (A), and we have shown (A) and (B) are equivalent. 
		\end{proof}
		
		Likewise, we have a result about exact subexponential bounds (which may even deal with decay to zero).
		
		\begin{theorem} \label{lemma.fthetaxsubexponentialexact}
			Let $F$ be continuous. Suppose all the eigenvalues of $A$ have negative real parts. Let $x$ be the unique continuous solution of \eqref{eq.xmult}. 
			Let $\gamma$ be subexponential. Then the following are equivalent:
			\begin{enumerate}
				\item[(A)]  There exist $\Delta>0$, $K>0$ such that 
				\[
				\left\| \int_{(t-\theta)^+}^t F(s)\,ds \right\|\leq K\gamma(t), \quad \theta\in [0,\Delta], \quad t\geq 0,
				\]
				and there also exist $i\in \{1,\ldots, d\}$, $\theta'>0$  such that 
				\[
				\limsup_{t\to\infty} \frac{|\int_{t-\theta'}^t F_i(s)\,ds|}{\gamma(t)} >0;
				\]
				\item[($\text{A}'$)] For every $\Delta>0$ there is $K=K(\Delta)>0$ such that 
				\[
				\left\| \int_{(t-\theta)^+}^t F(s)\,ds \right\|\leq K\gamma(t), \quad \theta\in [0,\Delta], \quad t\geq 0;
				\]	
				and there exist $i\in \{1,\ldots, d\}$, $\theta'\in (0,\Delta]$ such that  
				\[
				\limsup_{t\to\infty} \frac{|\int_{t-\theta'}^t F_i(s)\,ds|}{\gamma(t)} >0;
				\]	
				\item[($\text{A}''$)] For every $\Delta>0$ there exists $K=K(\Delta)>0$ such that 
				\[
				\left\| \int_{(t-\theta)^+}^t F(s)\,ds \right\|\leq K\gamma(t), \quad \theta\in [0,\Delta], \quad t\geq 0
				\]	
				and there also exists $i\in \{1,\ldots, d\}$ such that  
				\[
				\text{Leb}\left(\theta\in [0,\Delta]: 
				\limsup_{t\to\infty} \frac{|\int_{t-\theta}^t F_i(s)\,ds|}{\gamma(t)} =0 \right)=0;
				\]			
				\item[(B)] 
				\[
				\limsup_{t\to\infty}\frac{\|y(t)\|}{\gamma(t)} \in (0,\infty);  
				\]
				\item[(C)] 	
				\[
				\limsup_{t\to\infty}\frac{\|x(t)\|}{\gamma(t)} \in (0,\infty).  
				\]  
			\end{enumerate}
		\end{theorem}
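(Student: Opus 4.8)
The plan is to run the proof of Theorem~\ref{thm.finitedimerateexact} essentially verbatim, making two substitutions throughout: every appeal to a monotone-weight scalar result is replaced by its subexponential counterpart (Theorems~\ref{lemma.fthetaysubexponential}, \ref{lemma.fthetaysubexponentiallittleo}, and Theorem~\ref{lemma.fthetaymonotoneexactbettercondition}, which also covers the subexponential case), and every use of the monotonicity of $\gamma$ to control a convolution $\int_0^t e^{-\alpha(t-s)}\gamma(s)\,ds$ is replaced by the subexponential asymptotics \eqref{eq.subconvexp} (or, for the $\mathrm o$-statements, by \eqref{eq.littleosubexpconv}). As a first reduction I would dispose of the internal equivalences (A)$\iff$($\mathrm A'$)$\iff$($\mathrm A''$). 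Passing to the $1$-norm, $\|\int_{(t-\theta)^+}^t F(s)\,ds\|_1=\sum_i|\int_{(t-\theta)^+}^t F_i(s)\,ds|$, so the uniform-on-$[0,\Delta]$ bound on the left is equivalent to the same bound simultaneously for every component $F_i$; and, \emph{given} that bound, the three forms of the distinguished-component lower condition (a single lag $\theta'$, a positive-measure set of lags, a full-measure set of lags) are equivalent for each fixed $i$ by Theorem~\ref{lemma.fthetaymonotoneexactbettercondition} applied to the scalar data $F_i$, so the ``$\exists i$'' versions are equivalent too. Hence it suffices to prove (A)$\iff$(B)$\iff$(C).

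For (B)$\iff$(C): from the representation \eqref{eq.xrepymultid}, the exponential bound $\|\Phi(t)\|\le Ke^{-\alpha t}$, and \eqref{eq.subconvexp} one gets that $y=\mathrm O(\gamma)$ implies $x=\mathrm O(\gamma)$, and, using \eqref{eq.littleosubexpconv} in place of \eqref{eq.subconvexp}, that $y=\mathrm o(\gamma)$ implies $x=\mathrm o(\gamma)$; the reverse implications come the same way from \eqref{eq.yrepxmultid} (the case $\alpha=1$). These four implications are the subexponential forms of the $x$--$y$ equivalences established inside the proofs of Theorems~\ref{thm.multidbigO} and~\ref{thm.littleomultidimensional}. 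Granted them, the dichotomy argument of Theorem~\ref{thm.finitedimerateexact} applies unchanged: if (C) holds then $y=\mathrm O(\gamma)$, and if moreover $y=\mathrm o(\gamma)$ then $x=\mathrm o(\gamma)$, contradicting (C); hence $\limsup_{t\to\infty}\|y(t)\|/\gamma(t)\in(0,\infty)$, which is (B). The implication (B)$\Rightarrow$(C) is symmetric.

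For (A)$\iff$(B): since $\|y(t)\|_1=\sum_{i=1}^d|y_i(t)|$, statement (B) is equivalent to ``$y_i=\mathrm O(\gamma)$ for every $i$'' together with ``$\limsup_{t\to\infty}|y_i(t)|/\gamma(t)>0$ for at least one $i$'' (the former making each such $\limsup$ finite). Apply the scalar results of Section~4 to each equation $y_i'=-y_i+F_i$: by Theorem~\ref{lemma.fthetaysubexponential}, ``$y_i=\mathrm O(\gamma)$ for all $i$'' is equivalent to the uniform bound $\|\int_{(t-\theta)^+}^t F(s)\,ds\|\le K\gamma(t)$ on $\theta\in[0,\Delta]$; and, on the set where this bound holds, Theorem~\ref{lemma.fthetaymonotoneexactbettercondition} applied to $F_i$ shows that ``$\limsup_{t\to\infty}|y_i(t)|/\gamma(t)\in(0,\infty)$'' is equivalent to the existence of $\theta'\in(0,\Delta]$ with $\limsup_{t\to\infty}|\int_{t-\theta'}^t F_i(s)\,ds|/\gamma(t)>0$ (equivalently, to the component-$i$ full-measure statement of ($\mathrm A''$)). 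Assembling the ``$\exists i$'' over components yields (B)$\iff$($\mathrm A'$), hence (B)$\iff$(A)$\iff$($\mathrm A''$), completing the chain.

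I do not anticipate a genuine obstacle. The one point demanding care is the handling of the existential quantifier over components: (B) is \emph{not} a component-wise conjunction, and the argument must keep the $\mathrm O(\gamma)$ requirement, which holds for \emph{all} components, separate from the positive-$\limsup$ requirement, which need hold for only \emph{one} component, when translating between the norm $\|\int_{(t-\theta)^+}^t F(s)\,ds\|$ and the scalar functionals $\int_{(t-\theta)^+}^t F_i(s)\,ds$. Secondarily, one should note explicitly that the subexponential forms of the $x$--$y$ equivalences hold; but their proofs are literally the proofs of Theorems~\ref{thm.multidbigO} and~\ref{thm.littleomultidimensional} with \eqref{eq.subconvexp} and \eqref{eq.littleosubexpconv} substituted for the monotonicity estimates, so no new work is required.
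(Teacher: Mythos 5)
Your proposal is correct and coincides with the proof the paper intends but does not write out: the theorem is stated immediately after Theorem~\ref{thm.finitedimerateexact} as its subexponential analogue, and the intended argument is exactly the one you give, namely re-running the monotone-weight proof with Theorems~\ref{lemma.fthetaysubexponential}, \ref{lemma.fthetaysubexponentiallittleo} and \ref{lemma.fthetaymonotoneexactbettercondition} in place of their monotone counterparts, and with \eqref{eq.subconvexp} and the $\mathrm o$-version thereof controlling the convolutions in \eqref{eq.xrepymultid} and \eqref{eq.yrepxmultid}. You also correctly flag the only delicate bookkeeping point (the uniform $\mathrm O(\gamma)$ bound is a conjunction over components while the positive-$\limsup$ condition is a disjunction), which the $1$-norm identity $\|y\|_1=\sum_i|y_i|$ resolves as you describe.
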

		We remark that in all the results in this section, conditions involving norms on $F$ can be replaced by conditions on the component functions $F_i$. Thus, in checking conditions for the norm of the vector--valued solution $x$, it is necessary only to check conditions on each component of $F$. We leave the formulation of such results to the interested reader.  
		
		\subsection{Derivative bounds}
		The above results have concentrated on the bounds of the solution, and how these are connected to the bounds on the average of $F$, through $F_\theta$. It is therefore resonable to ask what role bounds on $f$ itself play. Our next result shows that the bounds on $f$ are connected intimately with bounds on the derivative. We prove the result in the case of increasing $\gamma$. 
		
		We start with an obvious, but important observation. Let $\Gamma$ be positive, non--decreasing and continuous on $[0,\infty)$. Suppose that $\limsup_{t\to\infty} \|F(t)\|/\Gamma(t)<+\infty$ and $f$ is continuous. Then $\|f(t)\|\leq K\Gamma(t)$ for all $t\geq 0$. Thus for $t\geq \theta$ and $\theta\in [0,1]$ we have 
		\begin{equation} \label{eq.Fthetagamma}
			\left\|\int_{(t-\theta)^+}^t F(s)\,ds \right\|
			= \left\|\int_{t-\theta}^t F(s)\,ds \right\|
			\leq \int_{t-\theta}^t \|F(s)\|\,ds \leq \theta K\Gamma(t)\leq K\Gamma(t),
		\end{equation}
		and for $0\leq t\leq \theta$ and $\theta\in [0,1]$
		\[
		\left\|\int_{(t-\theta)^+}^t F(s)\,ds \right\|
		= \left\|\int_0^t F(s)\,ds \right\|
		\leq \int_0^{t} \|F(s)\|\,ds \leq Kt\Gamma(t)\leq K\Gamma(t).
		\]
		\begin{theorem} \label{thm.derivativebounds0}
			Let $F$ be continuous. Suppose all the eigenvalues of $A$ have negative real parts. Let $x$ be the unique continuous solution of \eqref{eq.xmult}. 
			Let $\gamma$ be positive, continuous and non--decreasing. 
			\begin{itemize}
				\item[(i)] The following are equivalent:
				\begin{enumerate}
					\item[(A)] $F(t)=O(\gamma(t))$ as $t\to\infty$;
					\item[(B)]  $x(t)=O(\gamma(t))$, $x'(t)=O(\gamma(t))$ as $t\to\infty$.
				\end{enumerate}
				\item[(ii)] The following are equivalent:
				\begin{enumerate}
					\item[(A)] $F(t)=o(\gamma(t))$ as $t\to\infty$;
					\item[(B)]  $x(t)=o(\gamma(t))$, $x'(t)=o(\gamma(t))$ as $t\to\infty$.
				\end{enumerate}
			\end{itemize}
		\end{theorem}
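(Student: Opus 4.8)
The plan is to exploit the fact that, since $x$ solves \eqref{eq.xmult}, the derivative is algebraically slaved to the solution and the forcing through the identity $x'(t)=Ax(t)+F(t)$, equivalently $F(t)=x'(t)-Ax(t)$. Consequently the derivative bounds appearing in part (B) of each statement are never the real content: once $x$ and $F$ are controlled in a weighted norm, $x'$ inherits the same bound from the triangle inequality $\|x'(t)\|\le\|A\|\,\|x(t)\|+\|F(t)\|$, and conversely $F$ is recovered from $x$ and $x'$ through the same inequality. Thus both (i) and (ii) reduce to the single chain: pointwise bound on $F$ $\Rightarrow$ the corresponding bound on $x$ (this is where the earlier multidimensional theorems are used) $\Rightarrow$ the same bound on $x'$; and the reverse implication, which is one line. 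I would prove (i) and (ii) in parallel.

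For the forward direction of (i), I would start from $\|F(t)\|\le K\gamma(t)$ and invoke the elementary estimate \eqref{eq.Fthetagamma} (recorded just before the theorem, applied with the auxiliary weight there taken to be our $\gamma$): this gives $\|\int_{(t-\theta)^+}^t F(s)\,ds\|\le K\gamma(t)$ for all $\theta\in[0,1]$ and all $t\ge 0$, which is precisely condition (A) of Theorem~\ref{thm.multidbigO} with $\Delta=1$. Hence $x(t)=O(\gamma(t))$, and then $\|x'(t)\|\le\|A\|\,\|x(t)\|+\|F(t)\|=O(\gamma(t))$, establishing (B). For the forward direction of (ii), I would rerun the computation behind \eqref{eq.Fthetagamma} with an arbitrary $\epsilon>0$ in place of $K$ and a threshold $T(\epsilon)$ beyond which $\|F(s)\|\le\epsilon\gamma(s)$; using that $\gamma$ is non-decreasing to replace $\gamma(s)$ by $\gamma(t)$ on $[t-\theta,t]$ yields $\int_{t-\theta}^t F(s)\,ds=o(\gamma(t))$ for every $\theta\in[0,1]$, which is condition (A) of Theorem~\ref{thm.littleomultidimensional} with $\Delta=1$. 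That theorem delivers $x(t)=o(\gamma(t))$, and then $\|x'(t)\|\le\|A\|\,\|x(t)\|+\|F(t)\|=o(\gamma(t))$, proving (B).

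The reverse implications in both (i) and (ii) are immediate: from $F(t)=x'(t)-Ax(t)$ we obtain $\|F(t)\|\le\|x'(t)\|+\|A\|\,\|x(t)\|$, so if $x$ and $x'$ are both $O(\gamma(t))$ (respectively $o(\gamma(t))$) then so is $F$.

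Because the argument is so direct, there is no genuine analytic obstacle; the only points requiring a little care are (a) verifying that taking $\Delta=1$ is legitimate when applying Theorems~\ref{thm.multidbigO} and \ref{thm.littleomultidimensional} — it is, since those results only ask for the moving-average bound at \emph{some} $\Delta$ — and (b) redoing \eqref{eq.Fthetagamma} with an $\epsilon$-threshold in the little-$o$ case, where it is exactly the monotonicity of $\gamma$ that permits pulling the weight out of the integral. The conceptual point I would flag in the write-up is why the derivative bound is indispensable: $x(t)=O(\gamma(t))$ on its own does \emph{not} force $F(t)=O(\gamma(t))$ — the rapidly oscillating example $f(t)=\beta'(t)\sin(\beta(t))$ discussed earlier produces a bounded solution from an unbounded forcing term — so it is precisely the simultaneous control of $x$ and $x'$ that recovers pointwise control of $F$.
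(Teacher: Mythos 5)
Your proposal is correct and follows essentially the same route as the paper: pointwise control of $F$ gives the moving-average estimate \eqref{eq.Fthetagamma}, which feeds into Theorems~\ref{thm.multidbigO} and \ref{thm.littleomultidimensional} to control $x$, and the derivative bound then falls out algebraically from $x'=Ax+F$; the converse is the one-line identity $F=x'-Ax$. Your closing remark about why the derivative bound is genuinely needed is a sound observation, but the argument itself is exactly the paper's.
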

		\begin{proof}
			For part (i), suppose (A) holds. Then $F$ obeys $F(t)=\text{O}(\gamma(t))$ as $t\to\infty$, and, as observed above, we have that \eqref{eq.Fthetagamma} holds. Hence $x(t)=\text{O}(\gamma(t))$ as $t\to\infty$, which is the first part of (B). But $x'(t)=Ax(t)+F(t)$, so $x'(t)=\text{O}(\gamma(t))$ as $t\to\infty$, proving (B). 
			
			On the other hand, if (B) holds, write $F(t)=x'(t)-Ax(t)$, so all of the righthand side is $\text{O}(\gamma(t))$ as $t\to\infty$, proving (A). 
			
			For part (ii), to show (A) implies (B), we note that $F(t)=o(\gamma(t))$ as $t\to\infty$ implies $\int_{t-\theta}^t F(s)\,ds =o(\gamma(t))$ as $t\to\infty$. Hence $x(t)=o(\gamma(t))$ as $t\to\infty$. But then $x'=Ax+F$, so $x'(t)=o(\gamma(t))$ as $t\to\infty$. On the other hand, $F=x'-Ax$ so (B) clearly implies (A).
			%If $\limsup_{t\to\infty} |F(t)|/\gamma(t)\in (0,\infty)$, then clearly $x$ and $x'$ are $O(\gamma)$ by part (i). Suppose $x'=o(\gamma)$. Then 
			%\[
			%x(t)-x(t-\theta)=\int_{t-\theta}^t x'(s)\,ds = o(\gamma(t))
			%\]
			%On the other hand $x'=Ax+F
		\end{proof}
		
		In the case when the solutions of the differential equations are growing or fluctuating unboundedly, we can now connect very clearly the size of $x'$ with $F$ and the size of $x$ with that of $F_\theta$.  
		\begin{theorem} \label{thm.derivativebounds}
			Let $F$ be continuous. Suppose all the eigenvalues of $A$ have negative real parts. Let $x$ be the unique continuous solution of \eqref{eq.xmult}. 	
			\begin{itemize}
				\item[(i)] Suppose $\gamma$ and $\Gamma$  are positive, continuous and non--decreasing functions with $\gamma(t)=o(\Gamma(t))$ as $t\to\infty$. Then the  following are equivalent: 
				\begin{enumerate}
					\item[(A)] There exists $K>0$ and $\theta'>0$ such that 
					\begin{gather*}
						\left|\int_{(t-\theta)^+}^t F(s)\,ds \right|\leq K\gamma(t), \quad 
						\limsup_{t\to\infty} \left|\int_{t-\theta'}^t F(s)\,ds\right|/\gamma(t)>0, \\
						\limsup_{t\to\infty} |F(t)|/\Gamma(t)\in (0,\infty);
					\end{gather*}
					\item[(B)]
					\[
					\limsup_{t\to\infty} \frac{\|x(t)\|}{\gamma(t)}\in (0,\infty), \quad 
					\limsup_{t\to\infty} \frac{\|x'(t)\|}{\Gamma(t)}\in (0,\infty).
					\]
				\end{enumerate}
				Moreover, both imply $x(t)=o(\Gamma(t))$ as $t\to\infty$. 
				\item[(ii)] Suppose $\gamma$ is a positive, continuous and non--decreasing function. Then the  following are equivalent: 
				\begin{enumerate}
					\item[(A)] There exists $\theta'>0$ and $i\in \{1,\ldots,d\}$ such that 
					\begin{gather*}
						F(t)=O(\gamma(t)), \quad t\to\infty, \quad 
						\limsup_{t\to\infty} \left|\int_{t-\theta'}^t F_i(s)\,ds\right|/\gamma(t)>0;
					\end{gather*}
					\item[(B)]
					\[
					\limsup_{t\to\infty} \frac{\|x(t)\|}{\gamma(t)}\in (0,\infty), \quad x'(t)=O(\gamma(t)), \quad t\to\infty.
					\]
					\item[(iii)] If $\limsup_{t\to\infty} \|x(t)\|/\gamma(t)\in (0,\infty)$ and $x'(t)=o(\gamma(t))$ as $t\to\infty$, then 
					\[
					\limsup_{t\to\infty} \frac{|F(t)|}{\gamma(t)}\in (0,\infty).
					\]
				\end{enumerate}
			\end{itemize}
		\end{theorem}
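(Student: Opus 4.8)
All three parts follow by combining the exact--rate characterisations already established (Theorems~\ref{thm.multidbigO}, \ref{thm.littleomultidimensional}, \ref{thm.finitedimerateexact}, \ref{thm.derivativebounds0}) with the algebraic identity $x'(t)=Ax(t)+F(t)$ and elementary bookkeeping of orders of magnitude; no new analytic input is needed. Throughout I use the remark that, by norm equivalence on $\mathbb{R}^d$, a bound $\|v(t)\|\le C\gamma(t)$ (resp.\ $\limsup_{t\to\infty}\|v(t)\|/\gamma(t)>0$, resp.\ $v(t)=\text{o}(\gamma(t))$) holds iff the corresponding bound holds for each (resp.\ some, resp.\ every) component $v_i$; in particular $\limsup_{t\to\infty}\|\int_{t-\theta'}^t F(s)\,ds\|/\gamma(t)>0$ is equivalent to the existence of an index $i$ with $\limsup_{t\to\infty}|\int_{t-\theta'}^t F_i(s)\,ds|/\gamma(t)>0$. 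I also use the reverse triangle inequality in the forms $\|x'(t)\|\ge \|F(t)\|-\|A\|\,\|x(t)\|$ and $\|F(t)\|\ge \|x'(t)\|-\|A\|\,\|x(t)\|$, together with the observation \eqref{eq.Fthetagamma}, that $F(t)=\text{O}(\gamma(t))$ (or $\text{O}(\Gamma(t))$) forces the moving--average bound $\|\int_{(t-\theta)^+}^t F(s)\,ds\|\le K\gamma(t)$ (resp.\ $K\Gamma(t)$) for $\theta\in[0,1]$, $t\ge 0$.

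\textbf{Part (i).} Suppose (A) holds. The first two conditions in (A), via Theorem~\ref{thm.finitedimerateexact} (using the component reformulation of the $\limsup$), give $\limsup_{t\to\infty}\|x(t)\|/\gamma(t)\in(0,\infty)$; in particular $x(t)=\text{O}(\gamma(t))=\text{o}(\Gamma(t))$ since $\gamma(t)=\text{o}(\Gamma(t))$. The third condition of (A) says $F(t)=\text{O}(\Gamma(t))$ with $\limsup_{t\to\infty}\|F(t)\|/\Gamma(t)>0$; from $x'(t)=Ax(t)+F(t)$ and $\|x(t)\|=\text{o}(\Gamma(t))$ the upper estimate gives $x'(t)=\text{O}(\Gamma(t))$, while the lower estimate together with $\|x(t)\|/\Gamma(t)\to 0$ gives $\limsup_{t\to\infty}\|x'(t)\|/\Gamma(t)\ge \limsup_{t\to\infty}\|F(t)\|/\Gamma(t)>0$. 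This is (B), and the ``moreover'' assertion has been recorded en route. Conversely, if (B) holds then $x(t)=\text{O}(\gamma(t))=\text{o}(\Gamma(t))$ and Theorem~\ref{thm.finitedimerateexact} yields the first two conditions of (A); writing $F(t)=x'(t)-Ax(t)$ and using $\|x(t)\|=\text{o}(\Gamma(t))$ gives $F(t)=\text{O}(\Gamma(t))$ from the upper bound and $\limsup_{t\to\infty}\|F(t)\|/\Gamma(t)\ge \limsup_{t\to\infty}\|x'(t)\|/\Gamma(t)>0$ from the lower bound, which is the third condition of (A).

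\textbf{Parts (ii) and (iii).} For (ii): if (A) holds then $F(t)=\text{O}(\gamma(t))$, so Theorem~\ref{thm.derivativebounds0}(i) (equivalently $x'=Ax+F$ and \eqref{eq.Fthetagamma}) gives $x(t)=\text{O}(\gamma(t))$ and $x'(t)=\text{O}(\gamma(t))$; combining \eqref{eq.Fthetagamma} with the assumed positivity of $\limsup_{t\to\infty}|\int_{t-\theta'}^t F_i(s)\,ds|/\gamma(t)$ and Theorem~\ref{thm.finitedimerateexact} upgrades the first to $\limsup_{t\to\infty}\|x(t)\|/\gamma(t)\in(0,\infty)$, which is (B). If (B) holds then $x(t)=\text{O}(\gamma(t))$ and $x'(t)=\text{O}(\gamma(t))$, so $F(t)=\text{O}(\gamma(t))$ by Theorem~\ref{thm.derivativebounds0}(i), and $\limsup_{t\to\infty}\|x(t)\|/\gamma(t)>0$ forces, via Theorem~\ref{thm.finitedimerateexact}, the existence of $i,\theta'$ with $\limsup_{t\to\infty}|\int_{t-\theta'}^t F_i(s)\,ds|/\gamma(t)>0$, giving (A). For (iii): from $x(t)=\text{O}(\gamma(t))$, $x'(t)=\text{o}(\gamma(t))$ and $F(t)=x'(t)-Ax(t)$ we immediately get $F(t)=\text{O}(\gamma(t))$; and if $F(t)=\text{o}(\gamma(t))$ held, then Theorem~\ref{thm.derivativebounds0}(ii) would force $x(t)=\text{o}(\gamma(t))$, contradicting $\limsup_{t\to\infty}\|x(t)\|/\gamma(t)>0$. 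Hence $\limsup_{t\to\infty}\|F(t)\|/\gamma(t)\in(0,\infty)$.

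\textbf{Main obstacle.} The only point requiring genuine care is in part (i), where one must exploit $\gamma(t)=\text{o}(\Gamma(t))$ to render $\|A\|\,\|x(t)\|/\Gamma(t)$ negligible against both $\|F(t)\|/\Gamma(t)$ and $\|x'(t)\|/\Gamma(t)$; this is precisely what decouples the growth rate of the solution (governed by $F_\theta$ and $\gamma$) from that of its derivative (governed by $F$ and $\Gamma$). Everything else is a routine transcription of the scalar arguments into the vector setting via component bounds, exactly as in the proof of Theorem~\ref{thm.finitedimerateexact}.
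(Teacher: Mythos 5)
Your proof is correct and follows essentially the same route as the paper's: reduce the first part of each condition to Theorem~\ref{thm.finitedimerateexact} (and Theorem~\ref{thm.derivativebounds0}) via \eqref{eq.Fthetagamma}, then handle the derivative through the algebraic identity $x'=Ax+F$, exploiting $\gamma=o(\Gamma)$ in part (i) to make $\|A\|\|x\|/\Gamma$ negligible. The only stylistic difference is that where you invoke reverse-triangle-inequality inequalities on limsups directly (e.g.\ $\limsup\|x'\|/\Gamma\ge\limsup\|F\|/\Gamma$), the paper prefers brief arguments by contradiction (``if $x'=o(\Gamma)$ then $F=x'-Ax=o(\Gamma)$, contradiction''), and in part (iii) the paper routes through the integral-average condition while you invoke Theorem~\ref{thm.derivativebounds0}(ii); these are logically equivalent and neither buys anything over the other.
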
 
		\begin{proof}
			For part (i), the first part of the hypotheses in (A) give the first part of (B). The second part of (A) gives $x'(t)=O(\Gamma(t))$ as $t\to\infty$ by the last result. Moreover, since $\gamma(t)=o(\Gamma(t))$  as $t\to\infty$ and $x(t)=O(\gamma(t))$ as $t\to\infty$, we have $x(t)=o(\Gamma(t))$ as $t\to\infty$. Suppose by way of contradiction that $x'(t)=o(\Gamma(t))$ as $t\to\infty$. Then $F(t)=x'(t)-Ax(t)=o(\Gamma(t))$ as $t\to\infty$, forcing a contradiction. Hence the second part of (B) holds. 
			
			Conversely, the first part of (B) implies the first part of (A). Then, as $x(t)=O(\gamma(t))$ and $\gamma(t)=o(\Gamma(t))$ as $t\to\infty$, we have $x(t)=o(\Gamma(t))$ as $t\to\infty$. Suppose by way of contradiction that $F(t)=o(\Gamma(t))$ as $t\to\infty$. Then $x'(t)=Ax(t)+F(t)=o(\Gamma(t))$ as $t\to\infty$, contradicting the second part of (B). This completes the proof of part (i).
			
			For part (ii), note that the first part of (A) gives the first part of (A) in part (i), so therefore the first part of (B) holds, which implies $x(t)=O(\gamma(t))$. Since $F(t)=O(\gamma(t))$ by hypothesis, we have $x'(t)=Ax(t)+F(t)=O(\gamma(t))$ as $t\to\infty$. Thus (A) implies (B). 
			
			Conversely, the first part of (B) implies the limsup in part (A). By (B) and the fact that  $F(t)=x'(t)-Ax(t)$, we have that $F(t)=O(\gamma(t))$ as $t\to\infty$, proving (A).
			
			For part (iii), we have by hypothesis that $F(t)=x'(t)-Ax(t)$ is $O(\gamma(t))$ as $t\to\infty$. On the other hand, since $\|x\|/\gamma$ has a positive limsup, we have that 
			\[
			\limsup_{t\to\infty} \frac{\|\int_{t-\theta'}^t F(s)\,ds\|}{\gamma(t)}>0
			\]
			for some $\theta'>0$. Now, suppose by way of contradiction that $F(t)=o(\gamma(t))$ as $t\to\infty$. Then for every $\theta>0$ we have $\int_{t-\theta}^t F(s)\,ds = o(\gamma(t))$ as $t\to\infty$, contradicting the last displayed inequality. Hence, $F$ cannot be $o(\gamma(t))$ as $t\to\infty$, and the proof is complete.
		\end{proof}
		
		Part (i) of the theorem might leave the reader with the impression that the behaviour of the derivative is ``worse'' than the solution, or perhaps, in view of part (ii), no better. However, it can be that $\|x(t)\|/\gamma(t)$ has a positive limsup, while $x'(t)=\text{o}(\gamma(t))$ as $t\to\infty$. Consider the equation $x'=Ax+F$ where $A=-\text{diag}(\alpha_1,\ldots,\alpha_d)$, with each $\alpha_i>0$ and $F_i(t)\sim c_i\gamma_i(t)$ as $t\to\infty$, where each $\gamma_i$ is such that $\gamma_i'(t)=o(\gamma_i(t))$ and $\gamma_i(t)\to\infty$ as $t\to\infty$. Each $\gamma_i$ is subexponential and obeys $e^{\alpha_i t}\gamma_i(t)\to\infty$ as $t\to\infty$. Also, $x_i(t)$ obeys
		\[
		\frac{x_i(t)}{\gamma_i(t)}=\frac{x_i(0)}{e^{\alpha_i t} \gamma_i(t)}+ 
		\frac{\int_0^t e^{\alpha_i s}F_i(s)\,ds}{e^{\alpha_i t} \gamma_i(t)}.
		\]
		The first term on the righthand side has zero limit. Applying L'H\^opital's rule to the indeterminate limit in the second term, we get
		\[
		\lim_{t\to\infty} \frac{x_i(t)}{\gamma_i(t)}
		=\lim_{t\to\infty} \frac{e^{\alpha_i t}F_i(t)}{e^{\alpha_i t} \gamma_i'(t)+\alpha_i e^{\alpha_i t}\gamma_i(t)}
		=\frac{c_i}{\alpha_i}.
		\] 
		This implies, as $t\to\infty$, that 
		\[
		\frac{x_i'(t)}{\gamma_i(t)}
		=-\alpha_i\frac{x_i(t)}{\gamma_i(t)}+\frac{F_i(t)}{\gamma_i(t)}\to -\alpha_i\cdot\frac{c_i}{\alpha_i}+c_i=0.
		\]
		Now, for simplicity, suppose that $\gamma_i(t)/\gamma_1(t)\to 0$ as $t\to\infty$ for all $i\geq 2$, and that $c_1\neq 0$. Then, with $\gamma=\gamma_1$, we have 
		\[
		\lim_{t\to\infty} \frac{\|x_1(t)\|_1}{\gamma(t)} = \frac{|c_1|}{\alpha_1},
		\] 
		while $\|x'(t)\|_1 = o(\gamma(t))$ as $t\to\infty$.
		
		A similar example where $\Gamma$ is superexponential (in the sense that $\Gamma'(t)/\Gamma(t)\to\infty$ as $t\to\infty$, which implies $\lim_{t\to\infty} \log \Gamma(t)/t=+\infty$) gives an example in which part (i) holds. Here we assume there is $c_1\neq 0$ such that $F_1(t)\sim c_1\Gamma(t)$ and $F_i(t)=o(\Gamma(t))$ as $t\to\infty$ for $i\geq 2$. 
		Let $\gamma(t)=\int_0^t \Gamma(s)\,ds$. Then $\gamma''(t)/\gamma'(t)\to\infty$ as $t\to\infty$, so by asymptotic integration, $\gamma'(t)/\gamma(t)=\Gamma(t)/\gamma(t)\to \infty$ as $t\to\infty$. Hence, by L'H\^opital's rule, we have  
		\[
		\lim_{t\to\infty} 
		\frac{x_i(t)}{\gamma(t)}=\lim_{t\to\infty}\frac{x_i(0)}{e^{\alpha_i t} \gamma(t)}+ 
		\frac{\int_0^t e^{\alpha_i s}F_i(s)\,ds}{e^{\alpha_i t} \gamma(t)}.
		=0+\lim_{t\to\infty} 
		\frac{F_i(t)/\Gamma(t)}{1+\alpha_i \Gamma(t)/\gamma(t)}.
		\]
		Thus $x_1(t)\sim c_1\gamma(t)$ as $t\to\infty$, and $x_i(t)=o(\gamma(t))$ for $i\geq 2$. On the other hand 
		\[
		\frac{x_i'(t)}{\Gamma(t)}=-\alpha_i\frac{x_i(t)}{\gamma(t)}\frac{\gamma(t)}{\Gamma(t)}
		+\frac{F_i(t)}{\Gamma(t)}, 
		\]
		so $x_1'(t)\sim c_1 \Gamma(t)$ and $x_i'(t)=o(\Gamma(t))$ as $t\to\infty$ for $i\geq 2$. Hence we have $\gamma(t)=o(\Gamma(t))$, $\|x(t)\|_1\sim |c_1|\gamma(t)$ and $\|x'(t)\|_1\sim |c_1|\gamma(t)$ as $t\to\infty$, which is an example of the situation prevailing in part (i).
		
		\subsection{Perron--type converses} 
		We close with an example of a Perron--type theorem, which we did not consider in the scalar case. Let $\gamma$ be subexponential, and suppose for the sake of argument, that for every continuous function $f$ for which $f/\gamma$ is bounded, the solution $x$ of the differential equation \eqref{eq.xmult} is such that $x/\gamma$ is bounded. This is the situation that prevails in Theorem~\ref{lemma.fthetaxsubexponentialexact} \textit{under the assumption that all the eigenvalues of $A$ have negative real parts}. But we now show that if $x$ does enjoy such a bound, it must be that this assumption holds. 
		
		\begin{theorem} \label{thm.perronsubexponential}
			Let $\gamma$ be subexponential. Suppose for every $F$ in $\text{BC}_\psi$ that the corresponding unique continuous solution $x$ of \eqref{eq.xmult} is also in $\text{BC}_\psi$. Then all the eigenvalues of $A$ have negative real parts. 	
		\end{theorem}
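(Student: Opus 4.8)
The plan is to prove the contrapositive: assuming $A$ has an eigenvalue $\lambda = a+ib$ with $a,b\in\mathbb R$ and $a\ge 0$, I will exhibit a forcing term $F$ lying in $\text{BC}_\psi$ — more precisely, a complex-valued $F$ whose real and imaginary parts lie in $\text{BC}_\psi$ — for which the associated solution $x$ of \eqref{eq.xmult} satisfies $\limsup_{t\to\infty}\|x(t)\|/\gamma(t) = +\infty$, contradicting the hypothesis. Allowing complex-valued data costs nothing, by linearity: if $F = F_1 + iF_2$ then $x = x_1 + ix_2$, where each $x_k$ solves \eqref{eq.xmult} with forcing $F_k$, so the hypothesis applied to $F_1$ and $F_2$ separately would already force $\|x(t)\| \le \|x_1(t)\| + \|x_2(t)\| = O(\gamma(t))$.

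The first step reduces matters to a scalar ODE. Since $A$ and $A^T$ have the same spectrum, there is $u\in\mathbb C^d\setminus\{0\}$ with $A^T u = \lambda u$, equivalently $u^T A = \lambda u^T$. Fix an index $j$ with $u_j\neq 0$, put $w := e_j$ and $c := u^T w = u_j \neq 0$, and take the forcing $F(t) = \kappa\,\gamma(t)\,e^{ibt}\,w$ for $t\ge 0$, where $\kappa>0$ is a constant to be chosen later. Then $\|F(t)\| = \kappa\gamma(t)\|e_j\|$ is a fixed multiple of $\gamma(t)$, so (the real and imaginary parts of) $F$ lie in $\text{BC}_\psi$. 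If $x$ solves \eqref{eq.xmult} with this $F$ and we set $\psi(t) := u^T x(t)$, then differentiating and using $u^T A = \lambda u^T$ gives the scalar equation $\psi'(t) = \lambda\psi(t) + c\kappa\gamma(t)e^{ibt}$, whence by variation of constants and the identity $e^{-\lambda s}e^{ibs} = e^{-as}$,
\[
\psi(t) = e^{\lambda t}\psi(0) + c\kappa\, e^{\lambda t}\int_0^t e^{-as}\gamma(s)\,ds .
\]
The point of forcing at frequency $b$ is precisely that it removes the oscillation from the integrand, so that no delicate oscillatory cancellation estimates are required; this is the key simplification.

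The second step shows the solution escapes $\text{BC}_\psi$. Write $H(t) = \int_0^t e^{-as}\gamma(s)\,ds$, an increasing positive function. From the displayed formula, $|\psi(t)| = e^{at}\,|\psi(0) + c\kappa H(t)| \ge e^{at}\bigl(|c|\kappa H(t) - |\psi(0)|\bigr)$, so choosing $\kappa > 2|\psi(0)|/(|c|H(1))$ gives $|\psi(t)| \ge \tfrac12|c|\kappa\, e^{at}H(t)$ for all $t\ge 1$. Since $|\psi(t)| = |u^T x(t)| \le \|u\|_2\|x(t)\|_2$, equivalence of norms on $\mathbb R^d$ then gives $\|x(t)\| \ge C_0\, e^{at}H(t)$ for $t\ge 1$ and some $C_0>0$, so it remains to verify $\limsup_{t\to\infty} e^{at}H(t)/\gamma(t) = +\infty$. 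If $a>0$, then $H(t)\to\int_0^\infty e^{-as}\gamma(s)\,ds\in(0,\infty)$, the integral being finite because $e^{-\epsilon t}\gamma(t)\to 0$ for every $\epsilon>0$ (a property of subexponential functions established in Section~4); hence $e^{at}H(t)/\gamma(t)$ is bounded below by a constant times $e^{at}/\gamma(t)\to+\infty$, again using $e^{-\epsilon t}\gamma(t)\to 0$. If $a = 0$, then $H(t) = \int_0^t\gamma(s)\,ds =: G(t)$, and one shows $\limsup_{t\to\infty} G(t)/\gamma(t) = +\infty$: otherwise $G(t)\le M\gamma(t) = MG'(t)$ for all large $t$, so $(\log G)'\ge 1/M$ and $G$ grows at least like $e^{t/M}$, whence $\gamma(t)\sim\int_t^{t+1}\gamma(s)\,ds = G(t+1)-G(t)$ (another Section~4 property) also grows at least exponentially, contradicting $e^{-\epsilon t}\gamma(t)\to 0$ for $\epsilon<1/M$. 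In either case $\limsup_t\|x(t)\|/\gamma(t) = +\infty$, contradicting $x\in\text{BC}_\psi$, and the proof is complete.

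The only step I expect to require care is the elementary fact about subexponential weights that $\int_0^t\gamma(s)\,ds/\gamma(t)$ is unbounded (used in the case $a=0$); everything else is a routine scalar variation-of-constants computation once the left-eigenvector reduction and the resonant choice of forcing frequency are in place. I would either include the short argument above inline, or record it as a lemma alongside the other properties of subexponential functions developed in Section~4.
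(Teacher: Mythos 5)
Your proof is correct, but it takes a genuinely different route from the paper. The paper rescales by $\gamma$: setting $\tilde{x}=x/\gamma$, $\tilde{F}=F/\gamma$ converts the hypothesis into an unweighted admissibility statement (``bounded input implies bounded output'') for the nonautonomous system $\tilde{x}'=(A-\tfrac{\gamma'}{\gamma}I_d)\tilde{x}+\tilde{F}$, and then invokes Perron's theorem to conclude that the resolvent $R(t,s)=\Phi(t-s)\gamma(s)/\gamma(t)$ has uniformly bounded $L^1$-mass; sending $t\to\infty$ and using the uniform subexponential limit $\gamma(t-u)/\gamma(t)\to1$ yields $\Phi\in L^1$, hence the eigenvalue condition. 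You instead argue the contrapositive by direct construction: pick a boundary eigenvalue $\lambda=a+ib$ with $a\ge0$, a left eigenvector $u$, and the resonant forcing $F(t)=\kappa\gamma(t)e^{ibt}e_j$; the scalar projection $\psi=u^Tx$ satisfies a first-order ODE whose variation-of-constants integrand has no oscillation, and the solution visibly escapes $\text{BC}_\gamma$. Your argument is elementary and self-contained (no Perron theorem, no resolvent estimates), which is a genuine advantage; the paper's is shorter and arguably more conceptual, and the rescaling trick reuses machinery that generalises to non-autonomous perturbations.

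Two minor points worth tightening. First, the hypothesis fixes the initial value $\zeta$ in \eqref{eq.xmult}; your choice of large $\kappa$ handles this, but it would be slightly cleaner to observe (as the paper does) that one may take $\zeta=0$ without loss of generality, since $\Phi(\cdot)\zeta$ itself lies in $\text{BC}_\gamma$ by the $F=0$ case, so the hypothesis is really about the Volterra operator $F\mapsto\int_0^t\Phi(t-s)F(s)\,ds$. Second, in the case $a=0$ the route through $\gamma(t)\sim G(t+1)-G(t)$ is an unnecessary detour: from $G(t)\le MG'(t)=M\gamma(t)$ for $t\ge T$ one gets $G(t)\ge G(T)e^{(t-T)/M}$, and then directly $\gamma(t)\ge G(t)/M\ge (G(T)/M)e^{(t-T)/M}$, contradicting $e^{-\epsilon t}\gamma(t)\to0$ with $\epsilon<1/M$. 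Even simpler, the desired fact $G(t)/\gamma(t)\to\infty$ follows at once from the uniform subexponential limit: for any $\Theta>0$ and $t$ large, $G(t)\ge\int_{t-\Theta}^t\gamma(s)\,ds\ge\tfrac12\Theta\gamma(t)$, so $\liminf_{t\to\infty}G(t)/\gamma(t)\ge\Theta/2$ for every $\Theta$. Stating this as a short lemma alongside \eqref{eq.subconvexp}, as you suggest, would be a sensible way to present it.
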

		\begin{proof}
			To see this, let $x(0)=0$ and recall we may take $\gamma$ to be $C^1$ with enhanced properties: if we have a continuous $\gamma$ then $\Gamma(t)=\int_{t}^{t+1}\gamma(s)\,ds$ is subexponential, in $C^1$, obeys $\Gamma(t)\sim \gamma(t)$ as $t\to\infty$ and $\Gamma'(t)/\Gamma(t)\to 0$ as $t\to\infty$. Let $\gamma$ possess all these properties. This approach is legitimate since if $\gamma_1$ and $\gamma_2$ are continuous weight functions with $\gamma_1(t)\sim \gamma_2(t)$ as $t\to\infty$, then $BC_{\gamma_1}=BC_{\gamma_2}$. 
			
			Write $\tilde{F}(t)=F(t)/\gamma(t)$ and $\tilde{x}(t)=x(t)/\gamma(t)$. Then we have 
			\[
			\tilde{x}'(t)=A(t)\tilde{x}(t)+\tilde{F}(t), \quad t\geq 0
			\] 
			with $A(t):=A-\gamma'(t)/\gamma(t)\cdot I_d$. Thus $A(\cdot)$ is continuous and bounded (indeed $A(t)\to A$ as $t\to\infty$). Let the resolvent $R$ be given by 
			\begin{equation} \label{eq.resol}
				\frac{\partial}{\partial t}R(t,s)=A(t)R(t,s), \quad t>s; \quad R(s,s)=I_d.
			\end{equation}
			Then 
			\[
			\tilde{x}(t)=\int_0^t R(t,s)\tilde{F}(s)\,ds, \quad t\geq 0.
			\]
			Notice moreover that $R(t,s)=\Phi(t-s)\gamma(s)/\gamma(t)$, where $\Phi$ obeys \eqref{eq.fms}; this can be checked by showing this $R$ obeys the differential equation \eqref{eq.resol}, and since the differential equation has a unique solution, this identifies $R$ uniquely. Next by hypothesis it is true, for every bounded continuous $\tilde{F}$, that 
			\[
			t\mapsto \int_0^t R(t,s)\tilde{F}(s)\,ds 
			\] 
			is also a bounded continuous function. Since $R$ is a resolvent, it follows from Perron's result that there is an $M>0$ ($t$--independent) such that 
			\[ 
			\int_0^t |R(t,s)|\,ds \leq M, \quad t\geq 0.
			\]
			Therefore for any $T>0$ and $t\geq T$ we have 
			\[
			M\geq 
			\int_0^t |\Phi(t-s)|\frac{\gamma(s)}{\gamma(t)}\,ds = \int_0^t |\Phi(u)|\frac{\gamma(t-u)}{\gamma(t)}\,du \geq \int_0^T |\Phi(u)|\frac{\gamma(t-u)}{\gamma(t)}\,du.
			\]
			Now, since $\gamma$ is subexponential, if we let $t\to\infty$ on both sides, we get 
			\[
			\int_0^T |\Phi(u)|\,du \leq M, \quad T>0.
			\]
			Since $T$ is arbitrary, we have $\Phi\in L^1(\mathbb{R}^+;\mathbb{R}^{d\times d})$. But this can only be the case if all the eigenvalues of $A$ have negative real parts.
		\end{proof}
		
		\subsection{Perturbations of unstable equations}
		The theme of the paper up to now has revolved around ``stable'' equations which are then perturbed, and to give characterisations of their $L_\infty$ asymptotic behaviour, where the impact of the forcing term  on solutions appears to be well described by bounds on $F_\theta(t)=\int_{(t-\theta)^+}^t F(s)\,ds$. However, as we see in this short subsection, the theory also appears to extend quite naturally to equations which are ``unstable'', and are then externally forced. 
		
		Rather than trying to develop a comprehensive suite of theorems, we will prove a theorem which characterises solution sizes for ``large'' perturbations, and then give what appear relatively sharp conditions for growth rates when the perturbations are subdominant to the solution of the ordinary differential equation. 
		
		Once again, we study the differential equation \eqref{eq.xmult}, but now we assume that at least one of the eigenvalues of $A$ has a non--negative real part. Let $\sigma(A)$ be the spectrum of $A$, and let 
		\[
		\lambda(A):=\max\{\text{Re}(\lambda):\lambda\in \sigma(A)\}\geq 0.
		\]     
		Also, for each $\lambda\in \sigma(A)$ such that $\text{Re}(\lambda)=\lambda(A)$, let $\mu_A(\lambda)$ be its algebraic multiplicity. Define the maximal algebraic multiplicity among all such dominant eigenvalues by  
		\begin{equation} \label{eq.nA}
			n(\lambda(A)):=\max\{\mu_A(\lambda): \lambda\in \sigma(A), \text{Re}(\lambda)=\lambda(A)\}.
		\end{equation}
		Then $n=n(\lambda(A))\geq 1$. With this notation, and $\alpha=\lambda(A)\geq 0$, we have that there is a $C>0$ such that the fundamental matrix solution $\Phi$ of \eqref{eq.fms} obeys 
		\[
		\|\Phi(t)\|\leq C(1+t)^{n-1}e^{\alpha t}, \quad t\geq 0.
		\]
		Therefore, solutions of the unperturbed differential equation $u'(t)=Au(t)$ obey $u(t)=O(t^{n-1}e^{\alpha t})$, $t\to\infty$. Roughly speaking, it is therefore natural to consider perturbations which grow slower or faster than $e^{\alpha t}$ as $t\to\infty$. 
		
		This leads us to consider smooth weight functions $\gamma$ which obey
		\begin{equation} \label{eq.gammalebeta}
			\lim_{t\to\infty} \frac{\gamma'(t)}{\gamma(t)}=:\beta\in [0,\infty]
		\end{equation}
		The case when $\beta=+\infty$ will help us deal with superexponentially growing perturbations. Examples of such $\gamma$ include $\gamma(t)=e^{t^\eta}$ for $\eta>1$ or $\gamma(t)=\exp(e^t)$. If $\beta=0$, we have smooth subexponential functions (which can still be dominant in the case $\alpha=0$), while for $\beta>0$, $\gamma$ can be decomposed according to  $\gamma(t)=e^{\beta t}\gamma_0(t)$, where $\gamma_0$ is   
		smooth subexponential. Recall that asking for extra smoothness in weight functions is usually an irrestrictive side condition, and can ease analysis greatly. 
		
		We neglect the case when $\beta<0$, since exponentially decaying perturbations will not in general have a major effect on the relative size of solutions, given that the unperturbed equation exhibits exponential growth when $\alpha>0$.  	
		
		We give a characterisation of solutions where the unperturbed solution is subdominant (i.e. when $\alpha<\beta$). 
		
		\begin{theorem} \label{thm.unstablelargepert}
			Let $F$ be continuous. Suppose $\alpha=\lambda(A)\geq 0$ and let $n:=n(A)\geq 1$ be given by \eqref{eq.nA}. Let $x$ be the unique continuous solution of \eqref{eq.xmult}, and let $\gamma$ be a positive and $C^1$ function obeying \eqref{eq.gammalebeta}, and let $\beta>\alpha$. Then the following are equivalent:
			\begin{itemize}
				\item[(A)] There is $\Delta>0$, $K>0$ and $\theta'>0$ such that 
				\[
				\left\|\int_{(t-\theta)^+}^t  F(s)\,ds\right\|\leq K \gamma(t), \quad t\geq 0, \quad \theta\in [0,\Delta],
				\] 
				and 
				\[
				\limsup_{t\to\infty} \frac{\|\int_{t-\theta'}^t F(s)\,ds\|}{\gamma(t)}>0;
				\]
				\item[(B)] 
				\[
				\limsup_{t\to\infty} \frac{\|x(t)\|}{\gamma(t)}\in (0,\infty);
				\]
			\end{itemize} 	
		\end{theorem}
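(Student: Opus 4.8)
The plan is to reduce the analysis of $x$, exactly as in the stable case, to that of the auxiliary process $y=(y_1,\dots,y_d)$ solving the componentwise \emph{stable} system $y_i'(t)=-y_i(t)+F_i(t)$, $y_i(0)=0$, and then to invoke the scalar results of Section~4. The two representations \eqref{eq.xrepymultid} and \eqref{eq.yrepxmultid} continue to hold verbatim, since their derivation uses no spectral assumption on $A$. The only genuinely new ingredient is a family of convolution estimates adapted to the kernel bound $\|\Phi(t)\|\leq C(1+t)^{n-1}e^{\alpha t}$ and to weights $\gamma$ obeying \eqref{eq.gammalebeta} with $\beta>\alpha\geq 0$.

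First I would record some auxiliary facts about such $\gamma$. Since $\gamma'/\gamma\to\beta>0$, asymptotic integration gives $\log\gamma(t)/t\to\beta$, so $\gamma(t)\to\infty$ and, because $\alpha<\beta$, $(1+t)^{n-1}e^{\alpha t}=o(\gamma(t))$; in particular $\|\Phi(t)\zeta\|=o(\gamma(t))$ and $e^{-t}\|\zeta\|=o(\gamma(t))$. Moreover, choosing $\beta_0\in(\alpha,\beta)$ there is $T_1$ with $\gamma(t-u)\leq\gamma(t)e^{-\beta_0 u}$ for $0\leq u\leq t-T_1$; splitting a convolution integral at $u=t-T_1$ (the transient piece being $O((1+t)^{n-1}e^{\alpha t})=o(\gamma(t))$, the tail piece being bounded by $\gamma(t)\int_0^\infty(1+u)^{n-1}e^{-(\beta_0-\alpha)u}\,du<\infty$) yields
\begin{equation*}
\int_0^t(1+t-s)^{n-1}e^{\alpha(t-s)}\gamma(s)\,ds=O(\gamma(t)),\qquad \int_0^t e^{-(t-s)}\gamma(s)\,ds=O(\gamma(t)),\quad t\to\infty,
\end{equation*}
together with the corresponding ``little-$o$'' statements: if $g(t)=o(\gamma(t))$, then the same convolutions of $|g|$ are $o(\gamma(t))$ (split at the threshold beyond which $|g|\leq\epsilon\gamma$ and apply the $O$-bounds). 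When $\beta=+\infty$ these are even stronger, but the above suffices.

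With these in hand, the triangle inequality applied to \eqref{eq.xrepymultid} and \eqref{eq.yrepxmultid}, together with the estimates just listed, shows immediately that $\|y(t)\|=O(\gamma(t))\iff\|x(t)\|=O(\gamma(t))$ and $y(t)=o(\gamma(t))\iff x(t)=o(\gamma(t))$. Combining the $O$- and $o$-equivalences in the usual way yields $\limsup_{t\to\infty}\|x(t)\|/\gamma(t)\in(0,\infty)\iff\limsup_{t\to\infty}\|y(t)\|/\gamma(t)\in(0,\infty)$: if the left limsup lies in $(0,\infty)$ then $x=O(\gamma)$, hence $y=O(\gamma)$, and $y$ cannot be $o(\gamma)$ (else $x=o(\gamma)$), so the right limsup is positive and finite, and symmetrically for the converse. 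Finally, $\gamma$ is eventually nondecreasing (indeed $\gamma'>0$ for large $t$), so $\gamma$ agrees for large $t$ with the nondecreasing function $t\mapsto\sup_{s\leq t}\gamma(s)$, to which it is therefore asymptotic; replacing $\gamma$ by the latter changes none of the asymptotic statements and lets us apply Theorems~\ref{lemma.fthetaymonotone} and~\ref{lemma.fthetaymonotonelittleo} to each scalar equation $y_i'=-y_i+F_i$. This gives: $\|y(t)\|=O(\gamma(t))$ iff $\bigl|\int_{(t-\theta)^+}^t F_i(s)\,ds\bigr|\leq K_i\gamma(t)$ for $\theta\in[0,\Delta]$ for every $i$, equivalently $\bigl\|\int_{(t-\theta)^+}^t F(s)\,ds\bigr\|\leq K\gamma(t)$ by norm equivalence; and $\limsup\|y(t)\|/\gamma(t)>0$ iff some $y_i$ fails to be $o(\gamma)$, iff some $\int_{t-\theta'}^t F_i(s)\,ds$ fails to be $o(\gamma)$ for some $\theta'>0$, iff $\limsup_{t\to\infty}\bigl\|\int_{t-\theta'}^t F(s)\,ds\bigr\|/\gamma(t)>0$ for some $\theta'>0$. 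Chaining these equivalences identifies condition (A) with $\limsup\|y\|/\gamma\in(0,\infty)$, hence with (B).

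The step I expect to be the main obstacle is the first convolution estimate, i.e.\ bounding $\int_0^t(1+t-s)^{n-1}e^{\alpha(t-s)}\gamma(s)\,ds$ by $O(\gamma(t))$: this is exactly where the hypothesis $\beta>\alpha$ is used essentially (so that $(1+u)^{n-1}e^{(\alpha-\beta_0)u}$ is integrable and the transient term $(1+t)^{n-1}e^{\alpha t}$ is negligible against $\gamma$), and it must be carried through with care in the regime $\beta=+\infty$ and while accounting for the possibly non-monotone transient behaviour of $\gamma$. Everything else is a routine adaptation of the arguments already established in Sections~4 and~5.
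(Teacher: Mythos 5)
Your proposal is correct and essentially matches the paper's strategy: reduce to the auxiliary $d$-dimensional stable system $y$ via the two representations \eqref{eq.xrepymultid} and \eqref{eq.yrepxmultid}, establish convolution estimates adapted to the kernel $\|\Phi(t)\|\leq C(1+t)^{n-1}e^{\alpha t}$ and the weight $\gamma$, and transfer the scalar characterisations to the components of $y$. The one place where your route differs in technique is the convolution estimate itself: the paper first absorbs the polynomial factor into an $\epsilon$-exponential, $(1+t)^{n-1}\leq D(\epsilon)e^{\epsilon t}$ with $\epsilon<\beta-\alpha$, and then applies L'H\^opital to $\int_0^t e^{-(\alpha+\epsilon)s}\gamma(s)\,ds/\bigl(e^{-(\alpha+\epsilon)t}\gamma(t)\bigr)$; you instead use the pointwise bound $\gamma(t-u)\leq \gamma(t)e^{-\beta_0 u}$ on $u\leq t-T_1$ (valid because $\gamma'/\gamma\geq\beta_0$ eventually, by \eqref{eq.gammalebeta}) and split the integral at the threshold. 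Both are valid; your version keeps the polynomial factor explicit and avoids L'H\^opital at the cost of a slightly longer accounting of the transient piece. A second organisational difference: you prove the full bidirectional ``$O$'' and ``$o$'' equivalences between $x$ and $y$ and then chain them, whereas the paper establishes only the three implications it actually needs and closes the loop for ``(B) $\Rightarrow$ first part of (A)'' directly via the identity \eqref{eq.Fthetax}. Your workaround for the fact that $\gamma$ need only be eventually increasing — replacing $\gamma$ by the asymptotically equal nondecreasing envelope $t\mapsto\sup_{s\leq t}\gamma(s)$ before invoking Theorems \ref{lemma.fthetaymonotone} and \ref{lemma.fthetaymonotonelittleo} — is a more explicit rendering of the remark the paper makes at the opening of its proof (that ultimate monotonicity of $\gamma$ suffices). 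No gaps.
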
 
		\begin{proof}
			Since $\beta>\alpha\geq 0$, we have that $\gamma'(t)>0$ for all $t$ sufficiently large, and that this ultimate monotonicity can be assumed in place of monotonicity in all proofs without harm. Let $y$ be the $d$--dimensional solution of \eqref{eq.yvector}. 
			
			Assume (A). Then $\limsup_{t\to\infty} \|y(t)\|/\gamma(t)\in (0,\infty)$. Thus $\|y(t)\|\leq L\gamma(t)$ for all $t\geq 0$. Moreover, by \eqref{eq.gammalebeta} we have that 
			\[
			\lim_{t\to\infty} \log \gamma(t)/t=\beta>\alpha,
			\]
			so $\Phi(t)=o(\gamma(t))$ as $t\to\infty$. As before, \eqref{eq.xrepymultid} holds viz., 
			\begin{equation*} 
				x(t) = y(t)+\Phi(t)\zeta+  \int_0^t \Phi(t-s)(I_d+A)y(s)\,ds, \quad t\geq 0.
			\end{equation*}
			Majorisation leads to 
			\[
			\|x(t)\| \leq L\gamma(t) +\|\Phi(t)\|\|\zeta\|+ LC\|I_d+A\| \int_0^t (1+(t-s))^{n-1}e^{\alpha(t-s)} \gamma(s)\,ds, \quad t\geq 0.
			\]
			Since $\beta>\alpha$, for every $\epsilon\in (0,\beta-\alpha)$, there is a $D=D(\epsilon)>0$ such that $(1+t)^{n-1}\leq D(\epsilon) e^{\epsilon t}$ for all $t\geq 0$. Hence for all $t\geq 0$ we have 
			\begin{equation} \label{eq.masterest1}
				\frac{\|x(t)\|}{\gamma(t)} \leq L +\frac{\|\Phi(t)\|}{\gamma(t)}\|\zeta\|+ LCD(\epsilon)\|I_d+A\| \frac{1}{\gamma(t)e^{-(\alpha+\epsilon)t}}\int_0^t e^{-(\alpha+\epsilon)s} \gamma(s)\,ds. 
			\end{equation}
			As already noted, the second term on the righthand side of \eqref{eq.masterest1} has zero limit. For the third term, note from \eqref{eq.gammalebeta} that the function $\gamma_\epsilon(t):=\gamma(t)e^{-(\alpha+\epsilon)t}$ is such that $\gamma_\epsilon'(t)/\gamma_\epsilon(t)\to \beta-(\alpha+\epsilon)>0$ as $t\to\infty$, so by L'H\^opital's rule
			\[
			\int_0^t \gamma_\epsilon(s)\,ds \sim \frac{1}{\beta-(\alpha+\epsilon)}\gamma_\epsilon(t), \quad t\to\infty,
			\]  
			where the integral is $\text{o}(\gamma_\epsilon(t))$ as $t\to\infty$ in the case that $\beta=+\infty$. Therefore the last term on the righthand side of \eqref{eq.masterest1} tends to a finite limit as $t\to\infty$, and we have the finiteness of the limit superior in (B). 
			
			Suppose now, by way of contradiction that $x(t)=\text{o}(\gamma(t))$ as $t\to\infty$. 
			%Thus, for every $\epsilon>0$ there is $T=T(\epsilon)>0$ such that  $\|x(t)\|\leq \epsilon \gamma(t)$ for all $t\geq T$. 
			Now, we revisit the representation \eqref{eq.yrepxmultid} to get
			\begin{equation*}  %\label{eq.yrepxmultid}
				\frac{\|y(t)\|}{\gamma(t)}\leq \frac{\|x(t)\|}{\gamma(t)}+\frac{1}{\gamma(t)e^{t}}\|\zeta\|+\|I_d+A\| \frac{1}{ \gamma(t)e^{t}}\int_0^t e^s\|x(s)\|\,ds.
			\end{equation*}
			The first two terms on the righthand side have zero limits. For the third term, the integral is either convergent, in which case the last term tends to zero, or is divergent, in which case the last term has indeterminate limit of the form $\infty/\infty$. In this situation, apply again L'H\^opital's rule to the quotient:
			\[
			\lim_{t\to\infty} \frac{\int_0^t e^s \|x(s)\|\,ds}{e^t \gamma(t)}
			=\lim_{t\to\infty} \frac{\|x(t)\|}{\gamma'(t)+\gamma(t)}
			=\lim_{t\to\infty} \frac{\|x(t)\|/\gamma(t)}{\gamma'(t)/\gamma(t)+1}=0,
			\]
			appealing to \eqref{eq.gammalebeta} at the last step. Therefore, we have that $x(t)=\text{o}(\gamma(t))$ implies $y(t)=\text{o}(\gamma(t))$ which generates the desired contradiction. Hence (A) implies (B). 
			
			To show that (B) implies (A), note that the first part of (A) comes from the estimate $\|x(t)\|\leq K\gamma(t)$ for all $t\geq 0$ (which is a consequence of (B)) and the identity
			\begin{equation} \label{eq.Fthetax}
				\int_{t-\theta}^t F(s)\,ds = x(t)-x(t-\theta)-A\int_{t-\theta}^t x(s)\,ds, \quad t\geq \theta.
			\end{equation}
			To show the second part of (A) holds, assume by way of contradiction that it does not. Then $\int_{t-\theta}^t F(s)\,ds =o(\gamma(t))$ for all $\theta>0$. But this implies that $y(t)=\text{o}(\gamma(t))$ as $t\to\infty$. 
			Take $\epsilon\in (0,\beta-\alpha)$; using the notation above, we majorise \eqref{eq.xrepymultid} once again in a similar manner, this time arriving at   	
			\begin{equation*} 
				\frac{\|x(t)\|}{\gamma(t)} = \frac{\|y(t)\|}{\gamma(t)}+\frac{\|\Phi(t)\|}{\gamma(t)}\|\zeta\|+  D(\epsilon)\| I_d+A \|\frac{1}{\gamma_\epsilon(t)}\int_0^t e^{(\alpha+\epsilon)s}\|y(s)\|\,ds.
			\end{equation*} 
			Using L'H\^opital's rule once again, we see that the asymptotic relation $y(t)=\text{o}(\gamma(t))$ as $t\to\infty$ leads to the last term tending to zero as $t\to\infty$, so overall we have that $x(t)=o(\gamma(t))$ as $t\to\infty$. This contradicts our initial hypothesis, so the assumption that the second part of (A) is false must itself be false. Hence (A) is true, so (B) implies (A) as desired, and the equivalence is proven.  
		\end{proof}
		
		Rather than formulating and proving a cognate result in the cases where $\beta<\alpha$ and $\beta=\alpha$, which cover the situation where the perturbation is perturbation is subdominant, we will instead prove a results about Liapunov exponents, for which again we are able to characterise the asymptotic behaviour.
		or of a comparable order of magnitude (measured by the Liaupnov exponent).  
\subsection{Liapunov exponents} 
			We now seek a crude (but easily checkable) \textit{componentwise estimate} for perturbations which characterises the situation in which the upper Liapunov exponent of the unperturbed equation is not exceeded. To do this (a) we neither require refined conditions on the eigenstructure of $A$ nor a hypothesis on the normalising function $\gamma$ and (b) the conditions for the claimed growth bound are necessary and sufficient. We start by considering the unstable case, but later we will be able to remove entirely the assumption that the underlying unforced equation is unstable. 
				
				In order to get an equivalence involving the moving average of $F$, we need a preparatory scalar lemma.
				
				\begin{lemma} \label{lemma.liapunovexponent}
					Let $\alpha\geq 0$, $g\in C(\mathbb{R}^+;\mathbb{R})$. Let $u$ be the unique solution of the initial value problem  $u(0)=0$ $u'(t)=-u(t)+g(t)$ for $t\geq 0$. The following are equivalent:
					\begin{enumerate}
						\item[(A)] $\limsup_{t\to\infty} \log|u(t)|/t\leq \alpha$;
						\item[(B)] There exists $\Delta>0$, such that, for every $\epsilon>0$ there is $C=C(\epsilon,\Delta)>0$ such that 
						\[
						\left|\int_{(t-\theta)^+}^t g(s)\,ds\right|\leq C(\epsilon,\Delta) e^{(\alpha+\epsilon)t}, \quad \theta\in [0,\Delta], \quad t\geq 0.
						\]
					\end{enumerate}
				\end{lemma}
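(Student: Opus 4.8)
The plan is to read condition (A) as the family of $\text{O}$--bounds ``$u(t)=\text{O}(e^{(\alpha+\epsilon)t})$ as $t\to\infty$ for every $\epsilon>0$'' and then apply Theorem~\ref{lemma.fthetaymonotone} with the weight $\gamma_\epsilon(t):=e^{(\alpha+\epsilon)t}$. First I would record the elementary equivalence underpinning this reading. Since $\alpha\ge 0$ and $\epsilon>0$ give $\alpha+\epsilon>0$, we have $e^{(\alpha+\epsilon)t}\ge 1$ for all $t\ge 0$; hence $\limsup_{t\to\infty}\log|u(t)|/t\le\alpha$ holds if and only if for each $\epsilon>0$ there is $K(\epsilon)>0$ with $|u(t)|\le K(\epsilon)e^{(\alpha+\epsilon)t}$ for all $t\ge 0$. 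Indeed, if the limsup is at most $\alpha$ then for each $\epsilon>0$ there is $T(\epsilon)$ with $|u(t)|<e^{(\alpha+\epsilon)t}$ for $t\ge T(\epsilon)$, while $|u|$ is bounded, say by $M$, on the compact interval $[0,T(\epsilon)]$, so $|u(t)|\le \max(1,M)e^{(\alpha+\epsilon)t}$ throughout; conversely such a bound yields $\log|u(t)|/t\le (\log K(\epsilon))/t+\alpha+\epsilon\to\alpha+\epsilon$, and $\epsilon>0$ being arbitrary forces the limsup to be at most $\alpha$.

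Next, for each fixed $\epsilon>0$ the function $\gamma_\epsilon(t)=e^{(\alpha+\epsilon)t}$ is positive, continuous, and strictly increasing on $(0,\infty)$, so Theorem~\ref{lemma.fthetaymonotone} applies with $\gamma=\gamma_\epsilon$, $g$ in the role of $f$, and $u$ in the role of $y$ (since $u$ solves $u'=-u+g$, $u(0)=0$). That theorem gives that $u(t)=\text{O}(e^{(\alpha+\epsilon)t})$ as $t\to\infty$ is equivalent to the existence of a single $\Delta>0$ and $C(\Delta)>0$ with $|\int_{(t-\theta)^+}^t g(s)\,ds|\le C(\Delta)e^{(\alpha+\epsilon)t}$ for $\theta\in[0,\Delta]$ and $t\ge 0$, and is also equivalent to the same statement holding for \emph{every} $\Delta>0$.

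For (A) $\Rightarrow$ (B): assume $\limsup_{t\to\infty}\log|u(t)|/t\le\alpha$ and fix $\epsilon>0$. By the first paragraph, $u(t)=\text{O}(e^{(\alpha+\epsilon)t})$, so by the equivalence just stated --- choosing $\Delta=1$ in statement (B) of Theorem~\ref{lemma.fthetaymonotone} --- there is $C(\epsilon):=C(\epsilon,1)>0$ with $|\int_{(t-\theta)^+}^t g(s)\,ds|\le C(\epsilon)e^{(\alpha+\epsilon)t}$ for all $\theta\in[0,1]$ and $t\ge 0$. As $\epsilon>0$ was arbitrary and the lag $\Delta=1$ is the same for all $\epsilon$, this is exactly statement (B) of the lemma. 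For (B) $\Rightarrow$ (A): let $\Delta>0$ be as supplied by (B) and fix $\epsilon>0$. The bound in (B) is precisely statement (A) of Theorem~\ref{lemma.fthetaymonotone} with $\gamma=\gamma_\epsilon$, hence its statement (C) holds, i.e. $u(t)=\text{O}(e^{(\alpha+\epsilon)t})$; by the first paragraph this forces $\limsup_{t\to\infty}\log|u(t)|/t\le\alpha+\epsilon$, and letting $\epsilon\downarrow 0$ gives (A).

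The only points requiring care are that $e^{(\alpha+\epsilon)t}$ falls under the \emph{non-decreasing} (rather than subexponential) branch of Theorem~\ref{lemma.fthetaymonotone}, which is legitimate because $\alpha+\epsilon>0$, and that the single lag $\Delta$ demanded in the lemma's statement (B) can be fixed once and for all (we take $\Delta=1$) by exploiting the ``for every $\Delta>0$'' form of that theorem; everything else is the routine translation recorded in the first paragraph. I expect no genuine obstacle.
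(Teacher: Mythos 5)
Your proof is correct, and it is notably more economical than the paper's. Where the paper re-derives the needed bounds from scratch --- using the representation $\int_{(t-\theta)^+}^t g(s)\,ds = u(t)-u(t-\theta)+\int_{t-\theta}^t u(s)\,ds$ for (A)\,$\Rightarrow$\,(B), and Lemma~\ref{lemma.yintermsofftheta} together with the representation \eqref{eq.urepave} for (B)\,$\Rightarrow$\,(A) --- you observe that all this work has already been packaged into Theorem~\ref{lemma.fthetaymonotone}, and that for each $\epsilon>0$ the weight $\gamma_\epsilon(t)=e^{(\alpha+\epsilon)t}$ is an admissible (positive, continuous, non-decreasing) choice in that theorem because $\alpha+\epsilon>0$. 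The translation from (A) to the family of $O$-estimates $u(t)=O(e^{(\alpha+\epsilon)t})$ is handled cleanly, and the point that the single lag $\Delta$ required by (B) of the lemma can be fixed once (say $\Delta=1$) across all $\epsilon$ --- because Theorem~\ref{lemma.fthetaymonotone}(B) holds for \emph{every} $\Delta$ --- is exactly the step that needs saying explicitly, and you say it. So the mathematical content is identical (both proofs ultimately rest on the same representations), but your route buys a shorter proof at the cost of re-quantifying over $\epsilon$ outside an application of the earlier theorem, whereas the paper's direct re-derivation keeps the lemma more self-contained and avoids the reader having to re-verify the quantifier bookkeeping.
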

				\begin{proof}
					Assume (A). Then the continuity of $u$ implies that for every $\epsilon>0$ there is a $\bar{u}=\bar{u}(\epsilon)>0$ such that  $|u(t)|\leq \bar{u}(\epsilon)e^{(\alpha+\epsilon)t}$ for all $t\geq 0$. For $t\geq \theta$, we have the representation  
					\[
					\int_{(t-\theta)^+}^t g(s)\,ds = u(t)-u(t-\theta)+\int_{t-\theta}^t u(s)\,ds,
					\] 
					and for $t\in [0,\theta]$ we have 
					\[
					\int_{(t-\theta)^+}^t g(s)\,ds = u(t)+\int_{0}^t u(s)\,ds. 
					\]
					Then, for any $\theta\in [0,\Delta]$, it can be readily shown that an estimate of the form in (B) holds in both cases when $0\leq t\leq \theta$ and $t>\theta$. Thus (A) implies (B).
					
					Assuming (B), we apply Lemma~\ref{lemma.yintermsofftheta} as follows.  Let $\Delta>0$. For $\theta\in [0,\Delta]$, define  
					$g_\theta(t)=\int_{(t-\theta)^+}^t g(s)\,ds$ for $t\geq 0$. Define also  $\delta_g(t)=g(t)-g_\Delta(t)/\Delta$ for $t\geq 0$ and define  
					\begin{equation} \label{eq.G}
						G(t)=\int_0^t \delta_g(s)\,ds, \quad t\geq 0.
					\end{equation}
					Then by Lemma~\ref{lemma.yintermsofftheta},
					\begin{eqnarray} \label{eq.intdeltag1}
						G(t)&=& \frac{1}{\Delta}\int_0^\Delta g_\theta(t) \,d\theta, \quad t\geq \Delta,\\
						\label{eq.intdeltag2}
						G(t)&=& 
						%\int_0^t f(s)\,ds -\frac{1}{\Delta}\int_0^t \int_0^s f(u)\,du\,ds
						=g_\Delta(t)-\frac{1}{\Delta}\int_0^t g_\Delta(s)\,ds, \quad t\in [0,\Delta],
					\end{eqnarray}
					and also by Lemma~\ref{lemma.yintermsofftheta}, we have that $u$ obeys 
					\begin{equation} \label{eq.urepave}
						u(t)=\frac{1}{\Delta}\int_0^t e^{-(t-s)}g_\Delta(s)\,ds + G(t) - \int_0^t e^{-(t-s)} G(s)\,ds, \quad t\geq 0.
					\end{equation}
					On the other hand, by (B) we have 
					\[
					|g_\theta(t)|\leq C(\epsilon,\Delta)e^{(\alpha+\epsilon)t}, \quad t\geq 0,
					\]
					and by \eqref{eq.intdeltag1} and \eqref{eq.intdeltag2} means that there exists a $C_2=C_2(\epsilon,\Delta)>0$ such that  
					\[
					|G(t)|\leq C_1(\epsilon,\Delta) e^{(\alpha+\epsilon)t}, \quad t\geq 0.
					\]
					Inserting these estimates into \eqref{eq.urepave} means that there exists a $C_3=C_3(\epsilon,\Delta)$ such that 
					\[
					|u(t)|\leq  C_3(\epsilon,\Delta) e^{(\alpha+\epsilon)t}, \quad t\geq 0.
					\]
					Finally, taking logarithms on both sides, dividing by $t$, and letting $t\to\infty$, we get 
					\[
					\limsup_{t\to\infty} \frac{1}{t}\log|u(t)|\leq \alpha+\epsilon.
					\]
					Since the left--hand side is independent of $\epsilon>0$, we may let $\epsilon\downarrow 0$ to get (A), as required. 
				\end{proof}
				
				With this lemma in hand, we have the following result, which connects the asymptotic behaviour of the time averages with the Liapunov exponent. 
				
				\begin{theorem} \label{thm.lepreservation}
					Suppose $F$ is continuous. Let $\lambda(A)=\alpha\geq 0$, and suppose $x$ is the unique continuous solution to \eqref{eq.xmult}. Then the following are equivalent
					\begin{enumerate}
						\item[(A)] For each $i\in \{1,\ldots,d\}$ 
						\[
						\limsup_{t\to\infty} \frac{1}{t}\log \left |e^{-t}\int_0^t e^sF_i(s)\,ds\right|\leq \alpha;
						\]
						\item[(B)] There exists $\Delta>0$ such that, for every $\epsilon>0$ there is a $C=C(\Delta,\epsilon)>0$ such that 
						\[
						\left\|
						\int_{(t-\theta)^+}^t F(s)\,ds\right\|\leq C(\epsilon,\Delta)e^{(\alpha+\epsilon)t}, \quad \theta\in [0,\Delta], \quad t\geq 0.
						\]
						\item[(C)] 
						\[
						\limsup_{t\to\infty} \frac{1}{t}\log \|x(t)\|\leq \alpha.
						\]
					\end{enumerate}
				\end{theorem}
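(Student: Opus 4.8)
The plan is to reduce everything to the scalar Lemma~\ref{lemma.liapunovexponent} applied coordinate by coordinate, together with the two variation of constants representations \eqref{eq.xrepymultid} and \eqref{eq.yrepxmultid}, the identity \eqref{eq.Fthetax}, and the standard bound $\|\Phi(t)\|\le C(1+t)^{n-1}e^{\alpha t}$ associated with \eqref{eq.fms}, where $n=n(\lambda(A))$. The first step is the simple observation that $e^{-t}\int_0^t e^sF_i(s)\,ds$ is exactly $y_i(t)$, the $i$--th component of the auxiliary vector $y$ solving \eqref{eq.yvector}; hence (A) says precisely that $\limsup_{t\to\infty}t^{-1}\log|y_i(t)|\le\alpha$ for every $i\in\{1,\ldots,d\}$.

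First I would prove (A)$\Leftrightarrow$(B). For each $i$, apply Lemma~\ref{lemma.liapunovexponent} with $u=y_i$ and $g=F_i$: this produces a lag $\Delta_i>0$ such that, for each $\epsilon>0$, a constant $C_i(\epsilon)$ controls $|\int_{(t-\theta)^+}^tF_i(s)\,ds|$ by $C_i(\epsilon)e^{(\alpha+\epsilon)t}$ on $\theta\in[0,\Delta_i]$, $t\ge0$. Working in the $1$--norm (so that $\|\,\cdot\,\|=\sum_i|\,\cdot\,_i|$, with no loss by norm equivalence), setting $\Delta=\min_i\Delta_i$ and summing over $i$ gives the norm bound in (B); conversely (B) restricts to each coordinate and returns (A). I would also insert the short remark that the particular value of $\Delta$ in (B) is immaterial: a window of any length decomposes into finitely many windows of length $\le\Delta$, and since $e^{(\alpha+\epsilon)s}\le e^{(\alpha+\epsilon)t}$ for $s\le t$ the bound propagates to every $\Delta'>0$ at the price of a larger constant — the same interval--splitting device used repeatedly earlier in the paper.

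Next, (B)$\Rightarrow$(C): by the equivalence just established, for each $\epsilon>0$ there is $L(\epsilon)>0$ with $\|y(t)\|\le L(\epsilon)e^{(\alpha+\epsilon)t}$ for all $t\ge0$. Since $(1+t)^{n-1}\le D(\epsilon)e^{\epsilon t}$ for a suitable $D(\epsilon)$, we have $\|\Phi(t)\|\le CD(\epsilon)e^{(\alpha+\epsilon)t}$, and majorising \eqref{eq.xrepymultid} — with the convolution $\int_0^t e^{(\alpha+\epsilon)(t-s)}e^{(\alpha+\epsilon)s}\,ds=te^{(\alpha+\epsilon)t}$ — yields $\|x(t)\|\le M(\epsilon)(1+t)e^{(\alpha+\epsilon)t}$; taking logarithms, dividing by $t$ and letting $t\to\infty$ gives $\limsup_{t\to\infty}t^{-1}\log\|x(t)\|\le\alpha+\epsilon$, hence $\le\alpha$ as $\epsilon\downarrow0$, which is (C). Finally, (C)$\Rightarrow$(B): from $\|x(t)\|\le K(\epsilon)e^{(\alpha+\epsilon)t}$ and the identity \eqref{eq.Fthetax} (and its $t\le\theta$ counterpart $\int_{(t-\theta)^+}^tF(s)\,ds=x(t)-\zeta-A\int_0^tx(s)\,ds$), each of the terms $x(t)$, $x((t-\theta)^+)$ and $A\int_{(t-\theta)^+}^t x(s)\,ds$ is bounded on $\theta\in[0,1]$ by a constant times $e^{(\alpha+\epsilon)t}$ (using $\theta\le1$ on the integral), giving (B) with $\Delta=1$. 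This closes the cycle (A)$\Leftrightarrow$(B)$\Rightarrow$(C)$\Rightarrow$(B).

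I do not expect a genuine obstacle: every step majorises an object that has already been written down. The closest thing to a ``hard part'' is bookkeeping: reconciling the per--coordinate lags $\Delta_i$ delivered by Lemma~\ref{lemma.liapunovexponent} with the single $\Delta$ in (B) (handled by taking a minimum, plus the interval--splitting remark), and absorbing the polynomial factor $(1+t)^{n-1}$ in the bound on $\Phi$, as well as the extra linear factor from convolving two equal exponentials — all of which are invisible to $\limsup_{t\to\infty}t^{-1}\log(\cdot)$ and therefore cost nothing.
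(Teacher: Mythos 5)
Your proof is correct and follows essentially the same route as the paper: the equivalence (A) $\Leftrightarrow$ (B) is obtained by applying the scalar Lemma~\ref{lemma.liapunovexponent} to each component $y_i$ (noting $y_i(t)=e^{-t}\int_0^t e^sF_i(s)\,ds$), and the passage between $x$ and $y$ is done via the two variation of constants identities. The paper organises the cycle as (C) $\Leftrightarrow$ (A) and (A) $\Leftrightarrow$ (B), and it routes (C) $\Rightarrow$ (A) through \eqref{eq.yrepxmultid} rather than your slightly more direct (C) $\Rightarrow$ (B) via \eqref{eq.Fthetax}; also, the paper's convolution step for (A) $\Rightarrow$ (C) uses exponent $\alpha+\epsilon/2$ on $\Phi$ to avoid the extra polynomial factor that you absorb at the logarithm stage, and your worry about the per--coordinate lags $\Delta_i$ is moot since the (A) $\Rightarrow$ (B) direction of the scalar lemma in fact delivers the bound for every $\Delta>0$; but these are only cosmetic differences from the paper's argument.
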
 
				We notice, by norm equivalence, and the fact that the choice of $\Delta$ is unimportant in our proofs (so we are free to choose $\Delta=1$) that the condition in (B) is equivalent to the componentwise condition
				\begin{itemize}
					\item[($\text{B}'$)] For each $i\in \{1,\ldots,d\}$ and for every $\epsilon>0$ there is a $C_i=C_i(\epsilon)>0$ such that 
					\[
					\left|
					\int_{(t-\theta)^+}^t F_i(s)\,ds\right|\leq C_i(\epsilon)e^{(\alpha+\epsilon)t}, \quad \theta\in [0,1], \quad t\geq 0.
					\]
				\end{itemize}
				Thus, in order to understand the circumstances under which the perturbed differential equation has the same upper Liapunov exponent as the preserved system, it is enough to perform an easier component--by--component check on the perturbation. This should give the interested reader inspiration to take on the programme suggested after Theorem~\ref{lemma.fthetaxsubexponentialexact}.
				\begin{proof}[Proof of Theorem~\ref{thm.lepreservation}]
					Suppose (C) holds. Then for every $\epsilon>0$, there exists a $C(\epsilon)>0$ such that $\|x(t)\|_1\leq C(\epsilon)e^{(\alpha + \epsilon)}$ for all $t\geq 0$.
					Now, take the triangle inequality across \eqref{eq.yrepxmultid}, and use the estimate on $x$ to obtain 
					\[
					\|y(t)\|_1\leq e^{-t}\|\zeta\|+C(\epsilon)e^{(\alpha + \epsilon)t} + C(\epsilon)\|I_d+A\|_1\int_0^t e^{-(t-s)}e^{(\alpha+\epsilon)s}\,ds.
					\]
					This immediately implies that there is a $K(\epsilon)>0$ such that $\|y(t)\|_1\leq K(\epsilon)e^{(\alpha +\epsilon)t}$ for all $t\geq 0$. Thus for each $i\in \{1,\ldots,d\}$, we have 
					\[
					\left| e^{-t}\int_0^t e^s F_i(s)\,ds\right|\leq \|y(t)\|_1
					\leq K(\epsilon)e^{(\alpha +\epsilon)t},
					\]
					which, upon taking logarithms, dividing by $t$ and letting $t\to\infty$, and then letting $\epsilon\downarrow 0$, yields (A). 
					
					On the other hand, if (A) holds, then for each $i\in \{1,\ldots,d\}$ and each $\epsilon>0$ there is a $T_i(\epsilon)>0$ such that 
					\[
					\left|e^{-t}\int_0^t e^s F_i(s)\,ds\right|\leq e^{(\alpha+\epsilon) t}, \quad t\geq T_i(\epsilon).
					\]
					Take $T=\max_{i=1,\ldots,d} T_i(\epsilon)$. Then 
					$\|y(t)\|_1\leq de^{(\alpha+\epsilon)t}$ for all $t\geq T(\epsilon)$. Since $y$ is continuous, there must exists a $K_1(\epsilon)>0$ such that $\|y(t)\|_1\leq K_1(\epsilon) e^{(\alpha+\epsilon)t}$ for all $t\geq 0$. On the other hand, since $\lambda(A)=\alpha$, for every $\epsilon>0$ there exists $K_2(\epsilon)>0$ such that 
					\[
					\|\Phi(t)\|_1 \leq K_2(\epsilon) e^{(\alpha+\epsilon/2)t}, \quad t\geq 0.
					\]
					Now majorising across \eqref{eq.xrepymultid} in the obvious way we get 
					\begin{multline*}
						\|x(t)\|_1\leq K_2(\epsilon)e^{(\alpha+\epsilon/2)t}\|\zeta\|+K_1(\epsilon)e^{(\alpha+\epsilon)t}\\
						+\|I_d+A\|_1 K_1(\epsilon)K_2(\epsilon)\int_0^t e^{(\alpha+\epsilon/2)(t-s)}  e^{(\alpha+\epsilon)s}\,ds.
					\end{multline*}
					Taking the larger exponent among the first two terms, and observing that $\int_0^t e^{\epsilon s}\,ds \leq e^{\epsilon t/2}/(\epsilon/2)$ for $t\geq 0$, we get 
					\[
					\|x(t)\|_1\leq (K_1(\epsilon)+K_2(\epsilon)\|\zeta\|_1)
					e^{(\alpha+\epsilon)t}+\|I_d+A\|_1 K_1(\epsilon)K_2(\epsilon) e^{(\alpha+\epsilon/2)t} \cdot \frac{e^{\epsilon t/2}}{\epsilon/2}.
					\]
					Hence $\|x(t)\|_1\leq K(\epsilon)e^{(\alpha+\epsilon)t}$ for $t\geq 0$, from which (C) readily follows. Thus (A) and (C) are equivalent. 
					
					We now show (A) and (B) are equivalent. If (A) holds, we notice that (A) is exactly the statement that the solution of the differential equation $y_i'(t)=-y_i(t)+F_i(t)$ for $t\geq 0$ with $y_i(0)=0$ obeys  $\limsup_{t\to\infty} \log |y_i(t)|/t\leq \alpha$. Applying Lemma \ref{lemma.liapunovexponent} to each component $y_i$ for $i\in \{1,\ldots,d\}$ this implies condition ($\text{B}'$) above. But using the $1$--norm, this implies (B). Conversely, if (B) holds, then ($\text{B}'$) holds, using the fact that $|v_i|\leq \|v\|_1$ for any vector $v=(v_1,\ldots,v_d)^T$. Now by Lemma~\ref{lemma.liapunovexponent} this gives 
					$\limsup_{t\to\infty} \log |y_i(t)|/t\leq \alpha$ for each $i\in\{1,\ldots,d\}$, which is precisely (A). 
				\end{proof}
				
				The reader may ask whether such a result, which characterises the preservation of the top Liapunov exponent of the unperturbed equation, is confined to the case when the Liapunov exponent $\alpha\geq 0$. The answer is that the result extends readily to deal with the case when $\alpha<0$ also, but some modifications to the proof are needed. We sketch the details and state precisely the theorem below. The reader will notice that the statement of the result is the same as the equivalence between conditions (B) and (C) in 
				Theorem~\ref{thm.lepreservation}, with the restriction on the sign of the Liapunov exponent removed, so the result in the general case is no weaker than in the case $\alpha\geq 0$. 
				\begin{theorem} \label{thm.lepreservationall}
					Suppose $F$ is continuous. Let $\lambda(A)=\alpha$, and suppose $x$ is the unique continuous solution to \eqref{eq.xmult}. Then the following are equivalent
					\begin{enumerate}
						\item[(A)] There exists $\Delta>0$ such that, for every $\epsilon>0$ there is a $C=C(\Delta,\epsilon)>0$ such that 
						\[
						\left\|
						\int_{(t-\theta)^+}^t F(s)\,ds\right\|\leq C(\epsilon,\Delta)e^{(\alpha+\epsilon)t}, \quad \theta\in [0,\Delta], \quad t\geq 0.
						\]
						\item[(B)] 
						\[
						\limsup_{t\to\infty} \frac{1}{t}\log \|x(t)\|\leq \alpha.
						\]
					\end{enumerate}
				\end{theorem}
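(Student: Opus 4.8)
The plan is to reduce everything to what is already proved. When $\alpha\geq-1$, the equivalence of (A) and (B) here is exactly the equivalence of (B) and (C) in Theorem~\ref{thm.lepreservation}, and that proof needs no change: the only place the hypothesis $\alpha\geq0$ was used — in the proof of Lemma~\ref{lemma.liapunovexponent} and in majorising the representations \eqref{eq.xrepymultid} and \eqref{eq.yrepxmultid} — was to guarantee that terms such as $e^{-t}$ and convolutions $\int_0^t e^{-(t-s)}e^{(\alpha+\epsilon)s}\,ds$ are $\text{O}(e^{(\alpha+\epsilon)t})$, and this holds whenever $\alpha+\epsilon\geq-1$, hence (since $\epsilon>0$ is arbitrary) whenever $\alpha\geq-1$. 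So the first step is simply to record this and assume henceforth that $\alpha<-1$.

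For $\alpha<-1$ I would repeat the whole argument with the auxiliary scalar equations $y_i'=-y_i+F_i$ replaced by $y_i'=\mu y_i+F_i$ with the fixed base $\mu:=\alpha$ (note $\mu\leq\alpha$). The preparatory step is the analogue of Lemma~\ref{lemma.liapunovexponent}: for $u(0)=0$, $u'=\mu u+g$ one has $\limsup_{t\to\infty}\log|u(t)|/t\leq\alpha$ if and only if there is $\Delta>0$ so that for every $\epsilon>0$, $\left|\int_{(t-\theta)^+}^t g(s)\,ds\right|\leq C(\epsilon,\Delta)e^{(\alpha+\epsilon)t}$ for all $\theta\in[0,\Delta]$, $t\geq0$. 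The forward direction is verbatim the earlier one, resting only on $\int_{t-\theta}^t g(s)\,ds=u(t)-u(t-\theta)-\mu\int_{t-\theta}^t u(s)\,ds$ together with the continuity of $u$. For the converse, Lemma~\ref{lemma.yintermsofftheta} (with the same $\Delta$ and the function $G$ of \eqref{eq.G}, whose identities \eqref{eq.intdeltag1}--\eqref{eq.intdeltag2} do not involve $\mu$) yields, on integrating by parts against $e^{\mu(t-s)}$ instead of $e^{-(t-s)}$, the $\mu$-analogue of \eqref{eq.urepave},
\[
u(t)=\frac{1}{\Delta}\int_0^t e^{\mu(t-s)}g_\Delta(s)\,ds + G(t) + \mu\int_0^t e^{\mu(t-s)}G(s)\,ds ;
\]
the hypothesis on $g_\theta$ then forces $|G(t)|\leq C_1 e^{(\alpha+\epsilon)t}$, and, since $\mu=\alpha<\alpha+\epsilon$ makes $\int_0^t e^{\mu(t-s)}e^{(\alpha+\epsilon)s}\,ds\leq(\alpha+\epsilon-\mu)^{-1}e^{(\alpha+\epsilon)t}$, the display gives $|u(t)|\leq C_3 e^{(\alpha+\epsilon)t}$, whence (A) on letting $\epsilon\downarrow0$.

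With this shifted lemma in hand, the multidimensional argument used to prove Theorem~\ref{thm.lepreservation} runs through with $\mu$ in place of $-1$: letting $y$ solve $y'=\mu y+F$, $y(0)=0$, and $z=x-y$, one has $z'=Az+(A-\mu I_d)y$, so
\[
x(t)=\Phi(t)\zeta+y(t)+\int_0^t\Phi(t-s)(A-\mu I_d)y(s)\,ds,\qquad y(t)=x(t)-e^{\mu t}\zeta-\int_0^t e^{\mu(t-s)}(A-\mu I_d)x(s)\,ds.
\]
For (A)$\Rightarrow$(B): by norm equivalence (and taking $\Delta=1$, which is harmless) (A) is equivalent to $\left|\int_{(t-\theta)^+}^t F_i(s)\,ds\right|\leq C_i(\epsilon)e^{(\alpha+\epsilon)t}$ for each $i$, which by the shifted lemma gives $\limsup\log|y_i(t)|/t\leq\alpha$, hence $\|y(t)\|\leq K(\epsilon)e^{(\alpha+\epsilon)t}$ for all $t\geq0$; inserting this and $\|\Phi(t)\|\leq K(\epsilon)e^{(\alpha+\epsilon/2)t}$ into the first representation yields $\|x(t)\|\leq K'(\epsilon)e^{(\alpha+\epsilon)t}$, i.e. (B). For (B)$\Rightarrow$(A): from $\|x(t)\|\leq C(\epsilon)e^{(\alpha+\epsilon)t}$ and the reciprocal identity, the inequality $\mu\leq\alpha$ again controls the convolution and gives $\|y(t)\|\leq C'(\epsilon)e^{(\alpha+\epsilon)t}$, so $\limsup\log|y_i(t)|/t\leq\alpha$ for each $i$, and the shifted lemma returns the componentwise bound equivalent to (A). I expect the only point that genuinely requires care to be precisely this convolution accounting — making sure each integral against $e^{\mu(t-s)}$ produces a term of order $e^{(\alpha+\epsilon)t}$ rather than one of order $e^{\mu t}$, which would spoil the exponent — and this is exactly what the choice $\mu\leq\alpha$ (so that $\mu<\alpha+\epsilon$ for every $\epsilon>0$) secures; everything else is a transcription of the proofs of Theorem~\ref{thm.lepreservation} and Lemma~\ref{lemma.liapunovexponent}.
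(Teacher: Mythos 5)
Your proof is correct and takes essentially the same approach as the paper: the key move in both is to replace the auxiliary base $-1$ by $\mu=\alpha$, working with $y_i'=\alpha y_i+F_i$ and the shifted version of Lemma~\ref{lemma.liapunovexponent}, so that every convolution against $e^{\alpha(t-s)}$ is dominated by $e^{(\alpha+\epsilon)t}$. The paper does not split into the cases $\alpha\geq -1$ and $\alpha<-1$ (the shifted argument covers every $\alpha$ uniformly), and it establishes (B)$\Rightarrow$(A) directly from the identity $\int_{(t-\theta)^+}^t F(s)\,ds = x(t)-x((t-\theta)^+)-A\int_{(t-\theta)^+}^t x(s)\,ds$ rather than routing through $y$, but these are only cosmetic differences.
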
 
				Once again, we can replace $\Delta$ by unity, and use norm equivalence to simplify the checking of condition (A) to the equivalent condition below, where we need only check component--by--component.
				\begin{itemize}
					\item[($\text{A}'$)] For each $i\in \{1,\ldots,d\}$ and for every $\epsilon>0$ there is a $C_i=C_i(\epsilon)>0$ such that 
					\[
					\left|
					\int_{(t-\theta)^+}^t F_i(s)\,ds\right|\leq C_i(\epsilon)e^{(\alpha+\epsilon)t}, \quad \theta\in [0,1], \quad t\geq 0.
					\]
				\end{itemize} 
				\begin{proof}
					To show (B) implies (A), as on other occasions, we use the representation 
					\[
					\int_{(t-\theta)^+}^t F(s)\,ds = x(t)-x(\max(t-\theta,0))-A\int_{(t-\theta)^+}^t x(s)\,ds, \quad t\geq 0
					\]
					together with the estimate $\|x(t)\|\leq \bar{x}(\epsilon)e^{(\alpha+\epsilon)t}$ for $t\geq 0$, which holds for every $\epsilon>0$ and some $\bar{x}=\bar{x}(\epsilon)>0$. 
					
					To prove (A) implies (B), we need a new set of scalar auxiliary ODEs. For each $i\in \{1,\ldots,d\}$ let $y_i$ be the unique continuous solution of the ordinary differential equation $y_i'(t)=\alpha y_i(t)+F_i(t)$ for $t\geq 0$ with initial condition $y_i(0)=0$. Form the vector $y(t)=(y_1(t),\ldots,y_d(t))^T$. Then 
					$y'(t)=\alpha I_d y(t)+F(t)$ for $t\geq 0$ with $y(0)=0_d$. Assume temporarily that  
					\begin{equation} \label{eq.yilealpha}
						\limsup_{t\to\infty} \frac{1}{t}\log\|y_i(t)\|\leq \alpha, \quad i\in \{1,\ldots,d\},
					\end{equation}
					and notice it implies 
					\[
					\limsup_{t\to\infty} \frac{1}{t}\log\|y(t)\|\leq \alpha.
					\]
					Now, define $z(t)=x(t)-y(t)$ for $t\geq 0$. Then  $z'(t)=Az(t)+(A-\alpha I_d) y(t)$ for $t\geq 0$. Hence 
					\[
					x(t)=y(t)+z(t)=y(t)+\Phi(t)\xi+\int_{0}^t \Phi(t-s) (A-\alpha I_d) y(s)\,ds, \quad t\geq 0.
					\] 
					Reusing existing arguments, we see that this gives (B), so (A) implies (B), and the equivalence is proven, once we have established \eqref{eq.yilealpha}. This is equivalent to proving the following: suppose that $u$ is the unique continuous solution of $u'(t)=\alpha u(t)+g(t)$ for $t\geq 0$ with $u(0)=0$, and that there exists $\Delta>0$ such that for all $\epsilon>0$ there is a $C=C(\epsilon,\Delta)>0$ such that 
					\begin{equation} \label{eq.gestalpha}
						\left|\int_{(t-\theta)^+}^t g(s)\,ds\right|\leq C(\epsilon,\Delta)e^{(\alpha+\epsilon)t},\quad \theta\in [0,\Delta], \quad t\geq 0.
					\end{equation}
					Then $\limsup_{t\to\infty} \log|u(t)|/t\leq\alpha$. Using the notation from Lemma~\ref{lemma.liapunovexponent} for $g_\theta$, $G$ etc, we have that \eqref{eq.gestalpha} gives directly
					\[
					|g_\theta(t)|\leq C(\epsilon,\Delta)e^{(\alpha+\epsilon)t}, \quad t\geq 0.
					\]
					We can still apply \eqref{eq.intdeltag1} and \eqref{eq.intdeltag2} and this estimate to show that there is a  $C_1=C_1(\epsilon,\Delta)>0$ such that  
					\[
					|G(t)|\leq C_1(\epsilon,\Delta) e^{(\alpha+\epsilon)t}, \quad t\geq 0.
					\]
					Next using variation of constants, and the definition of $\delta_g$, we get  
					\[
					u(t)%=e^{\alpha t}\int_0^t e^{-\alpha s}g(s)\,ds
					=e^{\alpha t}\int_0^t e^{-\alpha s}\delta_g(s)\,ds +\frac{1}{\Delta}\int_0^t e^{\alpha(t-s)}g_\Delta(s)\,ds.
					\]
					The exponential bound on $g_\Delta$ ensures the second term on the righthand side is bounded by a term of the form  $C_2(\epsilon,\Delta)e^{(\alpha+\epsilon)t}$. As for the first term, integrate by parts and recall that $G(t)=\int_0^t \delta_g(s)\,ds$ to get
					\[
					e^{\alpha t}\int_0^t e^{-\alpha s}\delta_g(s)\,ds
					= 	G(t) + \alpha 	e^{\alpha t}\int_0^t e^{-\alpha s} G(s)\,ds. 
					\] 
					By virtue of the bound on $G$, this term also enjoys a bound of the form $C_3(\epsilon,\Delta)e^{(\alpha+\epsilon)t}$. Therefore, overall, we have that for every $\epsilon>0$ there is $C'=C'(\Delta,\epsilon)>0$ such that $|u(t)|\leq C'(\Delta,\epsilon)e^{(\alpha+\epsilon)t}$ for all $t\geq 0$. From this $\limsup_{t\to\infty} \log|u(t)|/t\leq \alpha$ follows, as needed. 
				\end{proof}

				\bibliographystyle{unsrt}

			\end{document}